\documentclass{article}

\usepackage{amsthm}  

\usepackage{amsmath}
\usepackage{latexsym}
\usepackage{amssymb}
\usepackage{verbatim}
\usepackage{setspace}
\usepackage{qtree}
\usepackage{pbox}
\usepackage{enumerate}

\usepackage{graphicx}
\usepackage{tikz}
\usetikzlibrary{tikzmark}

\usepackage{tikz}
\usetikzlibrary{arrows,decorations.pathmorphing,backgrounds,positioning,fit}

\newtheorem{df}{Definition}[section]  
\newtheorem{teo}[df]{Theorem}

\newtheorem{lem}[df]{Lemma}
\newtheorem{lemma}[df]{Lemma}

\newtheorem{example}[df]{Example}

\newtheorem{notat}[df]{Notation}
\newtheorem{rema}[df]{Remark}

\newcommand{\dep}[2]{=\hspace{-3pt}({#1};{#2})}
\newcommand{\con}[1]{=\hspace{-3pt}({#1})}

\newcommand{\cf}{\hspace{2pt}\Box\hspace{-4pt}\rightarrow}

\newcommand{\iimp}{\hspace{2pt}|\hspace{-4pt}\rightarrow}

\newcommand{\CD}{\mathcal{CD}}
\newcommand{\ICD}{\mathcal{ICD}}
\newcommand{\ICDP}{\mathcal{ICD}_+}
\newcommand{\ARROW}[1]{{#1^{\rightarrow}}}
\newcommand{\ACD}{\ARROW{\mathcal{CD}}}
\newcommand{\AICD}{\ARROW{\mathcal{ICD}}}
\newcommand{\AICDP}{\ARROW{\mathcal{ICD}_+}}

\newcommand{\PCD}{\mathcal{PCD}}
\newcommand{\PC}{\mathcal{PC}}
\newcommand{\PO}{\mathcal{PO}}
\newcommand{\PP}{\mathcal{P}}
\newcommand{\C}{\mathcal{C}}
\newcommand{\CO}{\mathcal{CO}}

\newcommand{\SET}[1]{\mathbf{#1}}

\newcommand{\pindep}{\rotatebox[origin=c]{90}{$\models$}}



\author{Fausto Barbero\thanks{The present work has been developed within the Academy of Finland Project n.286991, ``Dependence and Independence in Logic: Foundations and Philosophical Significance''.
} \qquad Gabriel Sandu\footnotemark[1] \\ 
 University of Helsinki}
\title{Team semantics for interventionist counterfactuals and causal dependence}

\begin{document}

\setcounter{page}{1} 

\maketitle

\begin{abstract}
We introduce a generalization of team semantics which provides a framework for 
manipulationist theories of causation based on structural equation models, such as 
Woodward's and Pearl's; our causal teams incorporate (partial or total) information about 
functional dependencies that are invariant under interventions. We give a unified treatment of  
observational and causal aspects of causal models by isolating two operators on causal 
teams which correspond, respectively, to conditioning and to interventionist counterfactual 
implication.

The evaluation of counterfactuals may involve the production of partially determined teams. We 
suggest a way of dealing with such cases by 1) the introduction of formal entries in causal 
teams, and 2) the introduction of weaker truth values (falsifiability and admissibility), 
for which we suggest some plausible semantical clauses.

We introduce formal languages for both deterministic and probabilistic causal discourse, and study in some detail their inferential aspects. Finally, we apply our framework to the analysis of direct and total causation, and other notions of dependence and invariance.
\end{abstract}

\tableofcontents

\section{Introduction}

Notions of dependence and independence entered the realm of logical investigation in the early days of mathematical logic, essentially with the introduction, by Frege, of nested quantification; this aspect of quantification was made explicit by the notion of Skolem function (\cite{Sko1920}). It is only in the last decades, however, that a systematical analysis of (in)dependence notions within predicative, propositional, modal logical languages has been undertaken. One of the main unifying tools in this enterprise is the so-called \emph{team semantics} (\cite{Hod1997},\cite{Hod1997b},\cite{Vaa2007}), whose key idea is that formulas involving dependencies acquire meaning only when evaluated over \emph{sets} of assignments. Variations of this methodology have allowed a systematical study of logical systems enriched with dependencies that arise from database theory, probabilistic theory and  quantum information theory. In many cases, distinct notions of (in)dependence can coexist in one and the same formal language, and this kind of interplay has been systematically investigated from the point of view of definability and complexity. 
 However, to the best of our knowledge, the framework of team semantics has not yet been used to investigate notions of causal and counterfactual dependence. In the present paper, we provide a generalization of team semantics that is also adequate to capture causal, counterfactual and probabilistic notions of (in)dependence which arise from modern manipulationist theories of causation such as Pearl's (\cite{Pea2000}) and Woodward's (\cite{Woo2003}).
 The generalization is not trivial. It is usually acknowledged in the literature that causal relationships cannot be reduced to mere correlations of data (the latter can be represented by a set of assignments.) Instead, a richer structure, encoding counterfactual assumptions, is needed. 


The plan of the paper is as follows. In section \ref{THCAUS} we give a short motivation for the manipulationist (interventionist) approach to causation.
 In section \ref{TEAMSEC} we present team semantics and show how to adequately enrich it so that it can handle interventionist counterfactuals. 
We will introduce several languages to express various (deterministic) notions of dependence. In section \ref{LOGLAWS} we analyze the logical properties of these languages, up to some basic soundness and completeness proofs; we use some of these properties to compare our counterfactuals with those of Stalnaker (\cite{Sta1968}), Lewis (\cite{Lew1973}) and Galles\&Pearl (\cite{GalPea1998}). Section \ref{NONPARAM} is dedicated to the logical issues that arise from nonparametric models. In section \ref{PROBSEC} we introduce probabilistic causal languages. In section \ref{CAUSMOD} we will discuss various notions of causation (mainly taken from Woodward) and invariance, in the light of the logic developed in earlier sections.

\section{Theories of causation: background} \label{THCAUS}

The reductive approach to causation has been philosophers' favourite
tool. It aims, roughly, at finding necessary and sufficient conditions
for causal relationships like ``$X$ causes $Y$''. Two such conditions
have been prominent in the literature: those formulated in terms of
conditional probabilities, and those based on counterfactuals (counterfactual
dependence). We discuss them shortly in the next two sections. The
main purpose of presenting them is to understand some of the reasons
why the reductive approach has been found unsatisfactory and replaced by non-reductive approaches, such as the \emph{manipulationist} or \emph{interventionist} accounts of counterfactuals and causation (Pearl, Woodward, Halpern, Hitchcock, Briggs, among others).


\subsection{Conditional probabilities}

Two well known endeavours to connect causal relationships with conditional
probabilities are due to P. Suppes (\cite{Sup1970}) and N. Cartwright (\cite{Car1983}).
For instance, Cartwright (\cite{Car1983}, p. 26) requires that causes raise
the probabilities of their effect:
\begin{description}
\item [{(CC)}] $C$ causes $E$ iff $Pr(E/C,K_{j})>Pr(E/K_{j})$ for all
state descriptions $K_{j}$ which satisfy certain conditions. 
\end{description}
Woodward (\cite{Woo2001}) finds (CC) defective in two ways. Firstly, the requirement
to conditionalize on all $K_{j}$ is too strong, and a weaker, existential
condition would suffice. Secondly, (CC) holds, as initially intended,
only for positive causes. When negative (inhibiting) changes are taken
into account, it is natural to replace $Pr(E/C,K_{j})>Pr(E/K_{j})$
by $Pr(E/C,K_{j})\neq Pr(E/K_{j})$ which only requires $C$ and $E$
to be (probabilistically) dependent. With these two points in mind,
Woodward (\cite{Woo2001}) proposes to replace (CC) with something of the following
sort:
\begin{description}
\item [{({*})}] $X$ causes $Z$ if and only if $X$ and $Z$ are dependent
conditional on certain other factors $F$. 
\end{description}
where $X$ and $Z$ are variables standing for properties. 

The problem now becomes that of specifying the other factors $F$.
Cartwright suggests that they include other causes of $Z$ with the
exception of those which are on a causal chain from $X$ to $Z$.
She recognizes, however (\cite{Car1983}, p. 30; \cite{Car1989}, p. 95 ff; \cite{Woo2001}, p. 58) that the claim that we should never condition on all such
intermediate variables is too strong. Woodward (\cite{Woo2001}) makes it clear
that in order to understand these restrictions, and more broadly,
in order for the project of specifying the other factors $F$ to have
any hope of success, we need to refer to other causes of $Y$ and
the way they are connected. For instance, to see why it is inappropriate
to conditionalize on the variables which lie on a causal chain from
$X$ to $Z$, as Cartwright first suggestion goes, it is enough to
consider the causal structure 
\[
X\rightarrow Y \rightarrow Z
\]
Now if we were to conditionalize on $Y$, we would expect, intuitively,
$X$ and $Z$ to be independent, which according to the definition
above would result in $X$ not being a cause of $Z$. But this is not
what we want. 

On the other side, to see that the requirement of never conditionalizing
on the variables on causal chains from $X$ to $Z$ is too strong,
it is enough to consider the causal structure 
\begin{figure}[htbp]
	\centering
		\begin{tikzpicture}
		
		\node(Y) {$Y$};
		
		\node(X) at (-1.5,-1.2) {$X$};

		\node(Z)  at (1.5, -1.2) {$Z$}; 
		
		\draw[->] (X)--(Y) ; 
		
		\draw[->] (X)--(Z); 
		
		\draw[->] (Y)--(Z); 
		
		\end{tikzpicture}
\end{figure}
in which both $X$ and $Y$ are ``direct'' causes of $Z$ and are
on the causal paths between $X$ and $Z$. If the causal connection
between $X$ and $Z$ is to be reflected in the probabilistic dependence
of $Z$ on $X$ conditional on some other properties $F$, then these
other properties must include $Y$. In other words, to determine the
causal influence of $X$ on $Z$ we must take into account the influence
of $Y$ on $Z$. 

What all this shows, according to Woodward, is that the project to
connect causal relationships between $X$ and $Y$ with conditional
probabilities, which finds its expression in ({*}), goes via a mechanism
which provides information about other causes (contributing or or
total) of $y$ besides $X$ and how these causes are connected with
one another and with $Z$. (\cite{Woo2001}, p. 58.) 

One such mechanism is that of \textit{causal Bayesian networks} (Pearl 2000/2009 \cite{Pea2000},
Spirtes, Glymour and Scheines 1993/2001 \cite{SpiGlySch1993}). They are built on Directed
Acyclic Graphs (DAGs) of the kind we have already encountered in our
earlier examples. We start with a set of variables $V=\left\{ X_{1},....X_{n}\right\} $
whose causal relationships we want to investigate and a set of (directed)
edges. A directed edge from the variable $X$ (parent) to the variable
$Z$ (child) is intended to represent the fact that $X$ is a \textit{direct
cause} of $Z$. The set of all parents of $Z$ is denoted as $PA_Z$. The project is now to investigate the connection between
causal relationships generated by the edges of the graph and conditional
probabilities $P(Z/PA_{Z})$ determined by a joint probability distribution
$P$ over $V$. More exactly, let $P$ be a joint probability distribution
on the set $V$. We denote by $x_{1},x_{2},...,x_{n}$ the values
associated with the variables in $V$, that is, $X_{1}=x_{1},...,X_{n}=x_{n}.$
A DAG $G$ is said to \textit{represent} $P$ if the equation 
\begin{equation}
P(x_{1},...,x_{n})=\prod_{i}P(x_{i}/pa_{i})
\end{equation}
 holds, where the variables $PA_{i}$ obey
the parent-child relation of $G$: $Y\in PA_{i}$ if and only if there
is an arrow in $G$ from $Y$ to $X_{i}$. 

A well known result states the following:
\begin{teo}[Markov Condition, \cite{VerPea1990}] Let $G$ be a DAG with $V$ its set
of variables. $G$ represents a probability distribution $P$ if and
only if every variable in $V$ is independent of all its nondescendants
(in G) conditional on its parents. 
\end{teo}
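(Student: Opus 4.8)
The plan is to derive both implications from a single device: a topological enumeration of the vertices combined with the chain rule of probability. First I would fix an ordering $X_1,\dots,X_n$ of $V$ that is compatible with $G$ (so that an edge from $X_j$ to $X_i$ forces $j<i$), which exists because $G$ is acyclic, and record two purely graph-theoretic facts. First, in \emph{any} compatible ordering each of $X_1,\dots,X_{i-1}$ is a nondescendant of $X_i$, since every descendant of $X_i$ must come after it. Second, given a \emph{fixed} $X_i$, one can choose a compatible ordering in which the nondescendants of $X_i$ occupy exactly the positions before $X_i$; this holds because a short case analysis on the possible endpoints of an edge shows there is no edge from a descendant of $X_i$ to a nondescendant of $X_i$, none from a descendant of $X_i$ to $X_i$, and none from $X_i$ to a nondescendant of $X_i$, so one may topologically sort the nondescendants first, then place $X_i$, then topologically sort the descendants.

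For the direction ``$G$ represents $P$'' $\Rightarrow$ ``local independence'', I would start from $P(x_1,\dots,x_n)=\prod_i P(x_i/pa_i)$ and marginalize the variables $x_n,x_{n-1},\dots$ one at a time. Since all children of $X_j$ follow $X_j$ in the ordering, once $x_n,\dots,x_{j+1}$ have been summed out the only surviving factor containing $x_j$ is $P(x_j/pa_j)$, whose sum over $x_j$ equals $1$; hence summing out $x_n,\dots,x_{i+1}$ leaves $P(x_1,\dots,x_i)=\prod_{j\le i}P(x_j/pa_j)$, and dividing two consecutive such identities gives $P(x_i/x_1,\dots,x_{i-1})=P(x_i/pa_i)$. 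Instantiating this with the special ordering supplied by the second graph-theoretic fact, it says precisely that conditioning $X_i$ on all of its nondescendants coincides with conditioning it on its parents, i.e.\ $X_i$ is independent of its nondescendants given its parents.

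For the converse I would fix any compatible ordering and expand $P(x_1,\dots,x_n)=\prod_i P(x_i/x_1,\dots,x_{i-1})$ by the chain rule. By the first graph-theoretic fact the conditioning set $\{X_1,\dots,X_{i-1}\}$ consists of nondescendants of $X_i$ and contains $PA_i$; since $\{X_1,\dots,X_{i-1}\}$ minus $PA_i$ is then a subset of the nondescendants of $X_i$ excluding $PA_i$, the local-independence hypothesis together with the decomposition rule for conditional independence gives $P(x_i/x_1,\dots,x_{i-1})=P(x_i/pa_i)$, and substituting into the expansion yields the factorization.

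I expect the real obstacle to be the first direction, and within it the second graph-theoretic fact: the factorization only immediately yields independence of $X_i$ from the predecessors in one particular ordering, and upgrading this to independence from \emph{all} nondescendants is exactly what the DAG structure (acyclicity together with the descendant/nondescendant dichotomy) buys us. A minor but necessary piece of bookkeeping is the treatment of zero-probability conditioning events: I would either restrict the displayed identities to value assignments of positive probability, as the statement implicitly does, or adopt the usual convention under which they hold trivially in the degenerate cases, after which the marginalization and chain-rule manipulations go through unchanged.
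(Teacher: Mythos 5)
The paper does not prove this statement at all: it is quoted as a known result and attributed to Verma and Pearl, so there is no internal argument to compare yours against. Judged on its own, your proof is correct and is essentially the standard textbook derivation of the equivalence between the recursive factorization and the local Markov property. Both graph-theoretic facts you rely on are sound: in any topological ordering the predecessors of $X_i$ are nondescendants, and since in a DAG there is no edge from $X_i$ or from a descendant of $X_i$ into the set of nondescendants, one can always arrange a topological ordering with all nondescendants of a fixed $X_i$ placed before it. The marginalization step (summing out the last variable, whose only occurrence is in its own conditional factor) correctly yields $P(x_1,\dots,x_i)=\prod_{j\le i}P(x_j/pa_j)$, and the division of consecutive prefix marginals gives the local independence in the chosen ordering; the converse direction via the chain rule plus the decomposition axiom for conditional independence is likewise standard and correct. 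You rightly identify the two delicate points: the upgrade from ``independent of the predecessors in one ordering'' to ``independent of all nondescendants,'' which is exactly what the special ordering delivers, and the handling of zero-probability conditioning events, for which your proposed convention (restricting to positive-probability configurations or adopting the usual degenerate-case convention) is the accepted fix. Nothing is missing.
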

A DAG $G$ which represents a probability distribution $P$ is often
referred as \textit{causal Bayesian network}. The Markov Condition
is thought to be important because it establishes a connection between
causal relationships as represented by the arrows of a DAG and dependence
relationships. For instance, if we take ``$Z$ is a nondescendant
of $X$'' to stand for ``$X$ does not cause $Z$'', then the Markov
condition implies that if ``$X$ does not cause $Z$'' then conditional
on its parents, $X$ is independent of $Z$ , which by contraposition
gives us the right-to-left direction of ({*}):
\begin{itemize}
\item If variables $X$ and $Z$ are dependent (that is: $Pr(X/Parents(X),Z) \neq Pr(X/Parents(X))$),
then $X$ causes $Z$.
\end{itemize}
(Cf. e.g. \cite{Woo2001})

As handy as causal Bayesian networks are to connect causes with conditional
probabilities, they fail to represent caunterfactual reasoning (\cite{Pea2000}, p. 37). In other words, if we want a robust notion of cause which
sustains counterfactuals, we need to supplement causal Bayesian networks
with an additional, deterministic component.

\subsection{Counterfactual dependence}  \label{SUBSCOUNT}

Accounts of causal relationships based on counterfactual dependence
have been available starting with the works of David Lewis and
G. H. von Wright. Lewis (\cite{Lew1973b}, \cite{Lew1979}) reduces causal relationships
to counterfactual dependence, which, in the end, is defined in terms
of similarity between possible worlds. Von Wright (\cite{Wri1971}) distinguishes
a causal connection between $p$ and $q$ from an accidental generalization
(the concomitance of $p$ and $q$) on the basis of the fact that the
former, unlike the latter, sustains a counterfactual assumption of
the form ''on occasions where $p$, in fact, was not the case, $q$
would have accompanied it, had $p$ been the case''; that is, $p$
is a state of affairs which we can \textit{produce or suppress at
will}. It is thus the \textit{manipulativity} of the antecedent which
is the individuating aspect of the cause factor (\cite{Wri1971},
p. 70). Lewis's account has been criticized for relying too much on
``dubious metaphysics'' and von Wright's account for being too ``anthropomorphic''. 

Both Lewis's and von Wright's accounts contain ingredients which have
been incorporated later on into interventionist accounts of counterfactuals
and causation. Roughly, one needs a mechanism to represent the exogenous
process which is the manipulation of the (variables of the) antecedent
of a counterfactual. This mechanism has become known as \textit{intervention}\footnote{Lewis does not speak of interventions, but their role is mirrored, in the work of Lewis, by the notions of ``local miracle'' and ``non-backtracking counterfactual''.}.
In addition, we need another mechanism to measure the \textit{effects}
the changes of the intervened variables have on the (variables of
the) consequent. This mechanism is encoded into the so-called \textit{structural
(functional) equations} (\cite{Pea2000}, \cite{Woo2001}). 

In more details, we divide the set of variables whose causal relationships
we want to investigate into two disjoint sets: a set $V$ of endogenous
variables and a set $U$ of exogenous variables. With each endogenous
variable $X\in V,$ an equation of the form 
\[
X=f_{X}(PA_{X})
\]
is associated, where the variables $PA_{X}\subseteq U\cup V\setminus\left\{ X\right\} $
are called the \textit{parents of} $X$. The standard interpretation
of a functional equation $X=f_{X}(PA_{X})$ is that of a law which
specifies the value of $X$ given every possible combination of the
values of $PA_{X}$. If we draw an arrow from each variable in $PA_{X}$
to the variable $X$ we obtain directed graphs as in the case of the causal
Bayesian frameworks mentioned in the previous section. The crucial
difference between the two frameworks is that in the present case,
instead of characterizing the child-parent relationships stochastically
in terms of conditional probabilities $P(X/PA_X),$ we characterize
them deterministically using the equations. 

Various notions of intervention have been proposed, both in the context
of causal Bayesian networks and in that of structural equations (e.g.,
\cite{SpiGlySch1993}, \cite{Woo1997}, \cite{Hau1998},
and \cite{Pea2000}). A detailed discussion of this variety is outside
the scope of this paper. Suffice it to say that an intervention $do(X=x)$
on a variable $X$ is an action which disconnects the variable $X$
from all the incoming arrows into $X$ while preserving all the other
arrows of the graph including those directed out of $X$. This is
known as the \textit{arrow-breaking} conception of interventions.
In the structural equations framework where the causal graph is induced
by the appropriate set of equations, the intervention $do(X=x)$ results
also in the alteration of that set: the equation $X=f_{X}(PA_{X})$
associated with the variable $X$ is replaced with a new equation $X=x$,
while keeping intact the other equations in the set. 

It may be useful to illustrate these notions by way of an example
(\cite{Woo2001}). 

Consider the following two equations:

\[
\begin{array}{cccccl}
(6) & Y=aX &  &  & (7) & Z=bX+cY\end{array}
\]
This set of equations induces the DAG 

\begin{figure}[htbp]
	\centering
		\begin{tikzpicture}
		
		\node(Y) {$Y$};
		
		\node(X) at (-1.8,-1.2) {$X$};

		\node(Z)  at (0, -1.2) {$Z$}; 
		
		\draw[->] (X)--(Y) ; 
		
		\draw[->] (X)--(Z); 
		
		\draw[->] (Y)--(Z); 
		
		\end{tikzpicture}
\end{figure}

 If we intervene on $Y$ and set its value to $1$ (i.e., $do(Y=1))$,
the result will be the altered system of equations: 
\[
\begin{array}{cccccc}
(6') & Y=1 &  &  & (7) & Z=bX+cY\end{array}
\]
corresponding to the new DAG: 

\begin{figure}[htbp]
	\centering
		\begin{tikzpicture}
		
		\node(Y) {$Y$};
		
		\node(X) at (-1.8,-1.2) {$X$};

		\node(Z)  at (0, -1.2) {$Z$}; 
		
		
		\draw[->] (X)--(Z); 
		
		\draw[->] (Y)--(Z); 
		
		\end{tikzpicture}
\end{figure}

\subsection{Various notions of cause} \label{SUBSCAUSES}

Woodward (\cite{Woo1997},\cite{Woo2001}) uses interventions in the context of structural
equations to \textit{define} ``$X$ is a cause of $Y$''. It turns
out, however, that there are several distinct notions of cause, each
satisfying the central commitment of the manipulability theory of
causation: there is a causal relationship between $X$ and $Y$ whenever
there is a possible intervention that changes the value of $X$ such
that carrying it out changes the value of $Y$ (or its probability).
\cite{Woo2001}, p. 54). Here are several notions of cause:
\begin{description}
\item [{(DC)}] (Direct cause) A necessary and sufficient condition for
$X$ to be a direct cause of $Y$ with respect to some variable set
$Z$ is that there be a possible intervention on $X$ that will change
$Y$ (or the probability distribution of $Y$) when all the other
variables in $Z$ besides $X$ and $Y$ are held fixed at some values
by interventions. (\cite{Woo2001}, p. 52)
\end{description}
For Woodward, direct causes correspond to the parent-child relationships in the
underlying DAG. The other notions cannot be always recovered from
the arrows of the DAG: 
\begin{description}
\item [{(TC)}] (Total cause) $X$ is a total cause of $Y$ if and only
if it has a non-null total effect on $Y$- that is, if and only if
there is some intervention on $X$ alone such that for some values
of the other variables, this intervention on $X$ will change $Y$.
The total effect of a change $dx$ in $X$ on $Y$ is the change in
the value of $Y$ that would result from an intervention on $X$ alone
that changes it by amount $dx$ (given the values of other variables
that are not descendants of $X$). (\cite{Woo2001}, p. 54)
\item [{(CC)}] (Contributing cause) $X$ is a contributing cause of $Y$
if and only if it makes non-null contribution to $Y$ along some directed
path in the sense that there is some set of values of variables that
are not on this path such that if these variables were fixed at those
values, there is some intervention on $X$ that will change the value
of $Y$. The contribution to a change in the value of $Y$ due to
a change $dx$ in the value of $X$ along some directed path is the
change in the value of $Y$ that would result from this change in
$X$, given that the values of off path variables are fixed by
independent interventions. (\cite{Woo2001}, pp. 54-55)
\end{description}
For instance in our earlier example consisting of the set of equations
(6) and (7), $X$ is a direct cause of $Z$. To make this more transparent
we shall assume that the coefficients $a,b,c$ are
all equal to 1. 

We first intervene on $Y$ and set its value to $1$ as we did above
(i.e., $do(Y=1))$. The result will be the altered system of equations:
\[
\begin{array}{cccccc}
(6') & Y=1 &  &  & (7) & Z=X+Y.\end{array}
\]
 Next we perform two (independent) interventions on the system (6')-(7):
$do(X=1)$ yields $Z=2$; and $do(X=2)$ yields $Z=3$. We conclude
that $X$ is a direct cause of $Z$ in the sense of (DC).

Consider now the set of equations 
\[
\begin{array}{cccccc}
(8) & Y=aX &  &  & (9) & Z=dY\end{array}
\]

which corresponds to the DAG 
\[
X\longrightarrow Y  \longrightarrow  Z
\]

Any intervention $do(Y=e)$ leads to the system of equations 
\[
\begin{array}{cccccc}
(8') & Y=e &  &  & (9) & Z=dY\end{array}
\]
 Now it is obvious that no change in the value of $X$ will have any
influence on the value of $Z$, hence $X$ is not a direct cause of
$Z$ as expected. On the other side, it is easy to see that $X$ is
a total cause of $Z$ in the sense of (TC) -- except in the special case that $d=\frac{1}{a}$. 

In section \ref{CAUSMOD} we shall represent some of these causal notions in the framework of causal team semantics, to which we now turn.

\section{Causal team semantics} \label{TEAMSEC}

\subsection{Teams}

Team semantics was introduced by W. Hodges (\cite{Hod1997},\cite{Hod1997b}) in order to provide a compositional presentation of the (game-theoretically defined) semantics of Independence-Friendly logic (\cite{HinSan1989},\cite{ManSanSev2011}). In the following years, team semantics has been used to extend first-order logic with database dependencies (e.g. Dependence logic \cite{Vaa2007}, Independence logic \cite{GraVaa2013}, Inclusion logic \cite{Gal2012}); similar approaches have been applied to propositional logics (\cite{YanVaa2016},\cite{YanVaa2017}) and modal logics (\cite{Vaa2008}, \cite{Tul2003}, \cite{BraFro2002}). Appropriate generalizations of teams have been used as descriptive languages for probabilistic dependencies (\cite{DurHanKonMeiVir2016}), for quantum phenomena (\cite{HytPaoVaa2015}), for Bayes networks (\cite{CorHytKonPenVaa2016}). 

The basic idea of team semantics is that notions such as dependence and independence, which express properties of relations (instead of individuals), cannot be captured by Tarskian semantics, which evaluates formulas on single assignments\footnote{This can be formally proved, see \cite{CamHod2001}.}; the appropriate unit for semantical evaluation is instead the \emph{team}, i.e., a \emph{set} of assignments (all sharing a common variable domain). In the standard approach, all the values of the variables come from a unique domain of individuals associated with an underlying model. However, in order to model causal and counterfactual dependence, we shall need to relax
the assumption that variables may take as values only individuals coming from a common domain.
Instead we shall take variables to represent properties which may have 
all kinds of values (as specified by their range). That is, once
a set $Dom$ of variables is fixed, each assignment will be a mapping
$s:\:Dom\rightarrow\bigcup_{X\in Dom}Ran(X)$ such that $s(X)\in Ran(X)$
for each $X\in Dom$. A \textbf{team} $T$ of domain $dom(T)=Dom$ will be any set of such assignments. 

As an example, recall Woodward's DAG  corresponding to the structural
equations (6) and (7) (subsection \ref{SUBSCOUNT}). We may take $X$ to express the property ``(whether
it is) winter'', $Y$ the property ``(whether it is) cloudy'' and
$Z$ the property ``(whether it is) snowing'' and take them to be
represented in the team $T=\left\{ s_{1},s_{2}\right\} $:

\[
\begin{array}{c|c|c|c}
 & X & Y & Z\\
\hline s_{1} & 0 & 0 & 0\\
\hline s_{2} & 1 & 1 & 1
\end{array}
\]

The basic semantic relation is now $T\models\psi$: the team $T$
satisfies the formula $\psi$. 

One can define a team semantics already for classical propositional languages. However
such a semantics does not really add anything new, in the sense that 

$T\models\psi$ if and only if for all $s\in T$, $s\models\psi$ (in the Tarskian sense). 

That is, the meaning of a first-order formula is always reducible
to a property of single assignments. However, once their semantics is
expressed in terms of teams, propositional languages can be extended in
ways that would be unavailable within Tarskian semantics; for instance,
one can add (functional) dependence atoms $\dep{X_1,\dots,X_n}{Y}$ whose
semantics is defined by 
\begin{description}
\item [{({*})}] $T\models\dep{X_1,\dots,X_n}{Y}$ if and only if for all $s,s'\in X$,
if $s(X_1)=s'(X_1),\dots,$ $s(X_n)=s'(X_n)$, then $s(Y)=s'(Y)$ 
\end{description}
expressing that $Y$ is functionally determined by $\{X_1,\dots,X_n\}$;
and this is a global property of the team, not reducible to properties
of the single assignments\footnote{Actually, it is a property of \emph{pairs} of assignments of $X$, but more complex formulas arise whose meaning cannot be analogously reduced.}. 

Thus, in our example, it holds that whether it is snowing depends completely
on whether it is winter, $T\models=(X;Z)$, and whether it is cloudy,
$T\models\dep{Y}{Z}$; and whether it is cloudy depends entirely on whether
it is winter, $T\models\dep{X}{Y}$. 

A team therefore might be used, for example, to represent a set of individual records coming from a statistical or experimental investigation; or, to represent all possible configurations that are compatible with the ranges of each variable; or yet, a subset of all possible configurations. This last case is particularly important in our context: it may well happen that some configurations are forbidden, even though they respect all variable ranges; this in particular happens if there are functional dependencies between the variables. For what regards the first possibility (analysis of statistical data) for many purposes it may be more suitable to use \emph{multi}teams, which allow multiple copies of assignments. Team-theoretical logical languages could thus be used to express global properties of distributions of values. On a second, different interpretation, teams and multiteams may be used to represent epistemic uncertainty about the current state of affairs; if one thinks of each assignment as a possible world, then the team may be thought as representing a set of equally plausible worlds. An intervention on such an object should, therefore, produce a new set of equally plausible candidates for the actual world.

\subsection{Causal teams}

Despite some claims to the contrary in the literature, teams
are insufficient to represent counterfactuals and causal notions based
on them. As explained in subsection \ref{SUBSCOUNT}, a proper treatment of causal notions requires an account of counterfactual information, as encoded e.g. in invariant structural equations.  It is true that a team sustains a number of functional dependencies among variables; but such dependencies may well be contingent, and disappear if the system is intervened upon. 
For this reason, we must extend teams so that they incorporate invariant dependencies or functions; and we must explain what may count as an intervention \emph{on a team}.


 Besides the ideal case in which a causal model contains a complete description of the functions involved in the structural equations (\emph{parametric} case), we develop our semantics with enough generality as to accomodate the more realistic case in which we possess only partial information about the functions (\emph{nonparametric} case); as an extreme case, the model might only incorporate information as to which functional \emph{dependencies} are invariant. In the nonparametric case, not all counterfactual statements can be evaluated; the additional logical complications related to this case will be examined in section \ref{NONPARAM}. Most of the paper will focus on the parametric case.

Before proceeding, we want to fix some notational conventions. 
  As is often done in the literature on causal models, we use the symbol $PA_Y$ ambiguously, so that it may mean either the set of parents of $Y$, or a sequence of the same variables in some fixed alphabetical ordering. For other sets/sequences of variables, we will adhere to the following conventions:

\begin{notat}
 \begin{itemize}
\item We use boldface letters such as \textbf{X} to denote either a set $\{X_1,\dots,X_n\}$ of variables or a sequence of the same variables (in the fixed alphabetical order) 
\item We use \textbf{x} to denote a set or sequence of values, each of which is a value for exactly one of the variables in \textbf{X}. We leave the details of these correspondences between variables and values as non-formalized.  
\item Writing $s(\SET X)$ we mean the set/sequence of values $s(X_1),\dots,s(X_n)$ that the assignment $s$ assigns to each of the variables in $\SET X$
\item $Ran(\SET X)$ is an abbreviation for $\prod_{X\in \SET X} Ran(X)$
\item By $\SET{X}\setminus \SET{Y}$ we denote the set/the sequence (in alphabetical order) of variables occurring in $\SET{X}$ but not in $\SET{Y}$, and by $\SET{x}\setminus\SET{y}$ a corresponding set/sequence of values
\item By $\SET{X}\cap\SET{Y}$  we denote the set/sequence of variables occurring in both $\SET{X}$ and $\SET{Y}$, and by $\SET{x}\cap\SET{y}$ the corresponding set/sequence of values
\end{itemize}

and so on.
\end{notat}

Given a team $T^-$ and a variable $X\in dom(T^-)$, we write $T^-(X)$ for the set of values that are obtained for $X$ in the team $T^-$; that is, $T^-(X)= \{s(X)|s\in T^-\}$. 
As before, we say that a team $T^-$ satisfies a functional dependence $\dep{X_1,\dots, X_n}{Y}$, and we write $T^-\models \dep{X_1,\dots,X_n}{Y}$, if:
\[
\text{for all } s,s'\in T^- \text{, if }s(X_i) = s'(X_i) \text{ for all }i=1..n, \text{ then }s(Y) = s'(Y).  
\]

\begin{df}
A \textbf{causal team} $T$ over variable domain $dom(T)$ with endogenous variables $\mathbf V\subseteq dom(T)$ is a quadruple $T = (T^-,G(T),\mathcal{R}_T,\mathcal{F}_T)$, where:
\begin{enumerate}
\item $T^-$ is a team.
\item $G(T) =(dom(T),E)$ is a graph over the set of variables. For any $X\in dom(T)$, we denote as $PA_X$ the set of all variables $Y\in dom(T)$ such that the arrow $(Y,X)$ is in $E$.
\item $\mathcal{R}_T = \{(X,Ran(X))|X\in dom(T)\}$ 
 (where the $Ran(X)$ may be arbitrary sets) is a function which assigns a range to each variable
\item $\mathcal{F}_T$ is a function $\{(V_i,f_{V_i})|V_i\in\mathbf V\}$ that assigns to each endogenous variable a $|PA_{V_i}|$-ary function $f_{V_i}:dom(f_{V_i})\rightarrow 
ran(V_i)$ \\(for some $dom(f_{V_i})\subseteq Ran(PA_{V_i})$)
\end{enumerate}
which satisfies the further restrictions:
\begin{enumerate}[a)]
\item $T^-(X) \subseteq Ran(X)$ for each $X\in dom(T)$
\item If $PA_Y=\{X_1,\dots, X_n\}$, then $T^-\models \dep{X_1,\dots, X_n}{Y}$
\item if $s\in T^-$ is such that $s(PA_Y)\in dom(f_Y)$, then $s(Y)= f_Y(s(PA_Y))$.
\end{enumerate}
In case $dom(f_V)= Ran(PA_V)$ for each $V\in \SET V$, we say the causal team is \textbf{parametric}; otherwise it is \textbf{nonparametric}.
\end{df}

Clause b) is there to ensure that whenever the graph contains an arrow $X_i\rightarrow Y$, and $\{X_1,\dots X_n\}$ is the maximal set of variables whence arrows come to $Y$, then the team satisfies the corresponding functional dependency $\dep{X_1,\dots X_n}{Y}$. Clause c) further ensures that such functional dependency is in accordance with the (partial description of the)  function $f_Y \in \mathcal{F}_T$.

The functional component $\mathcal F_T$ induces an associated system of structural equations, say
\[
Y := \mathcal F_T(Y)(PA_Y)
\]
for each variable $Y\in dom(T)$.

\begin{example} \label{EXCAUSALTEAM}
 Consider a causal team $T$ which has underlying team $T^- =\{\{(U,2),(X,1),(Y,2),(Z,4)\},$ $\{(U,3),$ $(X,1),(Y,2),(Z,4)\},\{(U,1),(X,3),(Y,3),$ $(Z,1)\},\{(U,1),(X,4),(Y,1),(Z,1)\},\{(U,4),(X,4),$ $(Y,1),(Z,1)\}\}$, graph $G(T) = (\{U,X,Y,Z\},$ $ \{(U,Z),(X,Y),(Y,Z),(X,Z)\})$, ranges $Ran(U) = Ran(X)$ $ = Ran(Y) = Ran(Z) = \{1,2,3,4\}$, and partial description of (one value of) the invariant function for $Z$: $\mathcal F(Z)(4,1,2):= 3$. We represent the $T^-$ and $G(T)$ components of $T$ by means of a decorated table:
\begin{center}
\begin{tabular}{|c|c|c|c|}
\hline
 \multicolumn{4}{|l|}{ } \\
 \multicolumn{4}{|l|}{U\tikzmark{U100} \ \tikzmark{X100}X\tikzmark{X100'} \ \  \tikzmark{Y100}Y\tikzmark{Y100'} \,  \tikzmark{Z100}Z} \\
\hline
 $2$ & $1$ & $2$ & $4$\\
\hline
 $3$ & $1$ & $2$ & $4$\\
\hline
 $1$ & $3$ & $3$ & $1$\\
\hline
 $1$ & $4$ & $1$ & $1$\\
\hline
$4$ & $4$ & $1$ & $1$\\ 
\hline
\end{tabular}
 \begin{tikzpicture}[overlay, remember picture, yshift=0.25\baselineskip,
 shorten >=.5pt, shorten <=.5pt]
  \draw [->] ([yshift=3pt]{pic cs:X100'})  [line width=0.2mm] to ([yshift=3pt]{pic cs:Y100});
	\draw [->] ([yshift=3pt]{pic cs:Y100'})  [line width=0.2mm] to ([yshift=3pt]{pic cs:Z100});
  \draw ([xshift = 1pt,yshift=7pt]{pic cs:X100'})  edge[line width=0.2mm, out=35,in=125,->] ([yshift=6pt]{pic cs:Z100});
	\draw ([yshift=8pt]{pic cs:U100})  edge[line width=0.2mm, out=35,in=125,->] ([yshift=8pt]{pic cs:Z100});
  \end{tikzpicture}
\end{center}
\end{example}

\subsection{Explicit causal teams}   \label{SUBSEXPTEAM}

For many purposes -- first of all, to keep a smoother account of the operations of taking subteams, and of applying iterated interventions -- it will be convenient to restrict attention to causal teams of a special form. This restriction causes no loss of generality within the developments pursued in the present paper. 

\begin{df}
A causal team $T=(T^-,G(T),\mathcal{R}_T,\mathcal{F}_T)$ with endogenous variables $\mathbf{V}$ is \textbf{explicit} if, for every $V\in\mathbf{V}$, the following additional condition holds:

d) Let $(X_1,\dots,X_n)$ be the list of the parents of $V$ in the fixed alphabetical order. For every $s\in T^-$, 
$(s(X_1),\dots,s(X_n))\in dom(f_Y)$.
\end{df}

(Here, as in the definition of causal team, $f_Y$ is a shorthand for $\mathcal F_T(Y)$).

In words, the $\mathcal F_T$ component of an explicit causal team encodes \emph{all} the information of the team that concerns invariant functions; no further values of the functions can be reconstructed from the team component $T^-$.

Given any causal team, it is always possible to construct in a canonical way an explicit causal team that corresponds to it, in the sense that it encodes exactly the same information (but it is more stable under the operations of taking subteams or interventions -- to be defined in the following subsections). 
For this purpose, given a causal team $T=(T^-,G(T),\mathcal{R}_T,\mathcal{F}_T)$ with endogenous variables $\mathbf{V}$, to any variable $Z\in \mathbf{V}$ we may associate an 
\textbf{explicit function} $h^T_Z:Ran(W)\rightarrow Ran(Z)$ as follows; given $pa_Z \in Ran(W)$, define\\ 

$h^T_Z(pa_Z)=$
				$\left\{\begin{array}{cc}
															f_Z(pa_Z) & \text{if this is defined in $T$} \\
															s(Z) & \text{if there is some row $s$ in $T$ with $s(PA_                                            Z) = pa_Z$}\\
				                      \end{array}\right.$   \\
\\
(Conditions b) and c) in the definition of causal team ensure that $h_Z^T$ is well-defined.)

We collect these explicit functions in a single function $\mathcal{H}_T$ such that, for each variable $Z\in\mathbf{V}$, $\mathcal{H}_T(Z):= h^T_Z$. Then:
\begin{center}
The explicit causal team associated to $T$ is $T^E:=(T^-,G(T),\mathcal R_T,\mathcal H_T)$.
\end{center}

\subsection{Causal subteams}

It will be important, in order to define a semantics for our languages, to talk about causal subteams. A causal subteam $S$ of a causal team $T$ is meant to express a condition of lesser uncertainty; this will be encoded by the fact that the assignments in $S^-$ form a subset of the assignments of $T^-$, which may be interpreted as the fact that less configurations are considered possible. At the same time, the transition to a subteam should not erase information concerning the graph, the ranges of variables, and the invariant functions. The definitions in the previous subsection should make it clear that this proviso is easily guaranteed if $T$ is an \emph{explicit} causal team. In this case, we can define:

\begin{df}
Given an explicit causal team $T$, a \textbf{causal subteam} $S$ of $T$ is a causal team with the same domain and the same set of endogenous variables, which satisfies the following conditions:
\begin{enumerate}
\item $S^-\subseteq T^-$
\item $G(S) = G(T)$
\item $\mathcal{R}_S = \mathcal{R}_T$
\item $\mathcal{F}_S = \mathcal{F}_T$.
\end{enumerate}
\end{df}

In case the team $T$ is not explicit, what can go wrong is that, for some endogenous variable $Y$, there may be some assignment $s\in T^- \setminus S^-$ such that $(s(X_1),\dots,s(X_n))\notin dom(f_Y)$ (where $X_1,\dots,X_n$ list $PA_Y$ in alphabetical order); in such case, the team $T$ encodes the fact that the invariant function which produces $Y$ assigns, to the list of arguments $(s(X_1),\dots,s(X_n))$, the value $s(Y)$; but this information is lost in the subteam $S$. To avoid this problem, we can define more generally:

\begin{df}
Given a causal team $T$, a \textbf{causal subteam} $S$ of $T$ is a causal subteam of the associated explicit team $T^E$ (as defined in subsection \ref{SUBSEXPTEAM})
\end{df}

This second definition obviously coincides with the previous one over explicit causal teams.

\subsection{A basic language and its semantics}   \label{BASICLAN}

Before discussing interventions and counterfactuals, we need to specify what it means for a causal team to satisfy atomic formulas and their boolean combinations. The kind of language we consider, for now, contains atomic dependence statements of the form $\dep{\SET X}{Y}$; atomic formulas of the forms $Y=y$ and $Y\neq y$, where $Y\in dom(T^-)$ and $y\in 
Ran(Y)$; connectives $\land$ and $\lor$.

By analogy with the other kinds of team semantics that have been proposed in the literature, we can define satisfaction of a formula by a causal team by the clauses: 
\begin{itemize}
\item $T\models \dep{\SET X}{Y}$ if for all $s,s'\in T$, $s(\SET X)=s'(\SET X)$ implies $s(Y)=s'(Y)$.
\item $T\models Y=y$ if, for all $s\in T^-$, $s(Y)=y$.
\item $T\models Y\neq y$ if, for all $s\in T^-$, $s(Y)\neq y$.
\item $T\models \psi\land \chi$ if $T\models \psi$ and $T\models \chi$.
\item $T\models \psi\lor \chi$ if there are two causal subteams $T_1,T_2$ of $T$ such that $T_1^-\cup T_2^- = T^-$, $T_1\models \psi$ and $T_2\models \chi$.\footnote{Notice that defining the union of any pair of \emph{causal} teams of the same domain is problematic, as the information on invariant functions given by each team might be incompatible with the information encoded in the other team.}
\end{itemize}

\subsection{Selective implication}

Our main goal is to give an exact semantics to counterfactual statements
of the form ``If $\psi$ had been the case, then $\chi$ would have
been the case''. Very often, however, one find examples in the literature
where these statements are embedded into a larger context. We have
seen that von Wright (\cite{Wri1971}) considers examples of the form ``on occasions
where $p$, in fact was not the case, $q$ would have accompanied
it, had $p$ been the case''. Pearl (\cite{Pea2000}) analyzes the following
query: ``what is the probability $Q$ that a subject who died under
treatment $(X=1,Y=1)$ would have recovered $(Y=0)$ had he or she
not been treated $(X=0)$? 

The appropriate representation of the last statement seems to be:
\[
(X=1\wedge Y=1)\supset(X=0\cf Y=0).
\]
where the symbol $\cf$ stands for counterfactual implication, while the \emph{selective  implication} $\supset$ is a form of restriction of the range of application of the
counterfactual to the available evidence. 
 What it does is to generate a subteam by selecting those assignments which satisfy the antecedent; and then it is checked whether the consequent holds in this subteam. 

Given a causal team $T$, and a classical formula $\psi$ (that is, a formula as in subsection \ref{BASICLAN}, but without dependence atoms) define the subteam $T^\psi$ by the condition:
\begin{itemize}
\item $(T^\psi)^- = \{s\in T^- | \{s\}\models \psi\}$.
\end{itemize}   

Then, we define selective implication by the clause:
\begin{itemize}
\item $T\models \psi
\supset \chi$ iff $T^\psi
\models \chi$.
\end{itemize}
Here the consequent $\chi$ can be any formula of our current logical language; therefore, it might happen not to be a property of single assignments. Instead, we require for now the antecedent to be classical. The general idea is that selective implication is a reasonable operator only for antecedents which are \emph{flat} formulas, in the sense with which this word is used in the literature on logics of dependence (which will be reviewed in the following subsections). Actually, typical applications involve at most conjunctions of atomic formulas of the type $Z=z$. 

\begin{example}
We observe that the selective implication
\[
T\models Z=3 \supset Y=2
\]
holds on any causal team that is based on the team $T$ which is depicted in the figure:

\begin{center}
\begin{tabular}{|c|c|c|}
\hline
 \multicolumn{3}{|c|}{Z \ Y \ X} \\
\hline
 $1$ & $2$ & $3$ \\
\hline
 $2$ & $1$ & $1$ \\
\hline
 $3$ & $2$ & $1$ \\
\hline
 $3$ & $2$ & $2$ \\
\hline
\end{tabular}
\end{center}

To see that the formula holds on it, we have to construct the reduced subteam $T^{Z=3}$:

\begin{center}
\begin{tabular}{|c|c|c|}
\hline
 \multicolumn{3}{|c|}{Z \ Y \ X} \\
\hline
 $3$ & $2$ & $1$ \\
\hline
 $3$ & $2$ & $2$ \\
\hline
\end{tabular}
\end{center}

which is obtained by selecting the third and fourth row of the previous table (the rows that satisfy $Z=3$). We can see, then, that $Y=2$ is satisfied by each row of this smaller table. Therefore, by the semantical clause for this kind of atomic formulas, $T^{Z=3}\models Y=2$. Then, the semantical clause for selective implication allows us to conclude that $T\models Z=3 \supset Y=2$.

Notice that we only needed the team structure in order to evaluate a selective implication; all the information required to evaluate it is already encoded in the team, and no information about the structural equations or the underlying DAG is needed.\\
\end{example}

\subsection{Interventions on teams: some examples}

 Our goal is to define (interventionist) \emph{counterfactual implication}. The task is more complicated than in the case of selective implication; we first illustrate the idea with some  examples; formal definitions will be provided after that. Informally, the idea is that a counterfactual 
$X=x\cf\psi$ is true in the causal team $T$ if $\psi$ is
true in the causal team which results from an intervention $do(X=x)$ applied to
 the team $T$.

\begin{example}
Consider any causal team $T$ with $T^- = \{ \{(X,1),(Y,2),(Z,3)\}, $ $ \{(X,2),(Y,1),(Z,4)\}, \{(X,4),(Y,1),(Z,4)\}, \{(X,3),(Y,3),(Z,4)\} \}$, and underlying graph $G(T)= (\{X,Y,Z\}, \{(X,Y), (Y,Z)\})$ (we omit specifying the $\mathcal R_T$ and $\mathcal F_T$ components). We can represent it as an annotated table:

\begin{center}

\begin{tabular}{|c|c|c|}
\hline
 
 \multicolumn{3}{|c|}{X\tikzmark{X} \  \tikzmark{Y}Y\tikzmark{Y'} \  \tikzmark{Z}Z} \\
\hline
 $1$ & $2$ & $3$ \\
\hline
 $2$ & $1$ & $4$ \\
\hline
 $4$ & $1$ & $4$ \\
\hline
 $3$ & $3$ & $4$ \\
\hline
\end{tabular}
 \begin{tikzpicture}[overlay, remember picture, yshift=.25\baselineskip, shorten >=.5pt, shorten <=.5pt]
  \draw [->] ([yshift=3pt]{pic cs:X})  [line width=0.2mm] to ([yshift=3pt]{pic cs:Y});
	\draw [->] ([yshift=3pt]{pic cs:Y'})  [line width=0.2mm] to ([yshift=3pt]{pic cs:Z});
  \end{tikzpicture}
\end{center}
We want to establish whether the counterfactual $Y=2 \cf Z=3$ holds in $T$. The idea is to intervene in $T$ by setting the value of $Y$ to $2$ (this corresponds to replacing the function $f_y$ with the constant function $2$); updating all other variables that \emph{invariantly} depend on $Y$; and removing all the arrows that enter into $Y$. The causal team thus produced will be denoted by $T_{Y=2}$. So, first we intervene on $Y$:

\begin{center}

\begin{tabular}{|c|c|c|}
\hline
 
 \multicolumn{3}{|l|}{X \ \  Y \tikzmark{Y1} \  \tikzmark{Z1}Z} \\
\hline
 $1$ & $\mathbf{2}$ & $\dots$ \\
\hline
 $2$ & $\mathbf{2}$ & $\dots$ \\
\hline
 $4$ & $\mathbf{2}$ & $\dots$ \\
\hline
 $3$ & $\mathbf{2}$ & $\dots$ \\
\hline
\end{tabular}
 \begin{tikzpicture}[overlay, remember picture, yshift=.25\baselineskip, shorten >=.5pt, shorten <=.5pt]
	\draw [->] ([yshift=3pt]{pic cs:Y1})  [line width=0.2mm] to ([yshift=3pt]{pic cs:Z1});
  \end{tikzpicture}
\end{center}
$Z$ is the only variable which has an invariant dependence on $Y$; therefore, we have to update its column. Since the function that determines $Z$ has $Y$ as its only parameter, we just have to consult $T$ and see that, in rows where $Y$ has value $2$, $Z$ takes value 3. Therefore, $T_{Y=2}$ looks like this:
\begin{center}

\begin{tabular}{|c|c|c|}
\hline
 
 \multicolumn{3}{|l|}{X \   Y \tikzmark{Y2} \  \tikzmark{Z2}Z} \\
\hline
 $1$ & $\mathbf{2}$ & $3$ \\
\hline
 $2$ & $\mathbf{2}$ & $3$ \\
\hline
 $4$ & $\mathbf{2}$ & $3$ \\
\hline
 $3$ & $\mathbf{2}$ & $3$ \\
\hline
\end{tabular}
 \begin{tikzpicture}[overlay, remember picture, yshift=.25\baselineskip, shorten >=.5pt, shorten <=.5pt]
	\draw [->] ([yshift=3pt]{pic cs:Y2})  [line width=0.2mm] to ([yshift=3pt]{pic cs:Z2});
  \end{tikzpicture}
\end{center}
(Again, we are omitting a representation of the $\mathcal R_T$ and $\mathcal F_T$ components). Now $T_{Y=2}\models Z=3$, therefore we conclude that $T\models Y=2 \cf Z=3$. 
The team $T_{Y=2}$ describes a counterfactual situation in which we have certainty about the values of $Y$ and $Z$, but not about the value of $X$.

Notice that this example uses both the team and the graph structure, but the specific invariant functions are not needed in the evaluation of this specific sentence. However, the observations above do not cover, for example, evaluation of counterfactuals of the form $Y=4\cf\dots$, because team and graph structure do not tell us anything about what value should $Z$ take in circumstances in which $Y=4$.

\end{example}

\begin{example}
We consider a slightly less trivial example. Here we evaluate the counterfactual $Y=2\cf (Z=2\lor Z=3)$ in the causal team $T$ shown in the picture below

\begin{center}
\begin{tabular}{|c|c|c|}
\hline
 \multicolumn{3}{|c|}{ }\\ 
 \multicolumn{3}{|l|}{X\tikzmark{X3} \  \tikzmark{Y3}Y\tikzmark{Y3'} \  \tikzmark{Z3}Z} \\
\hline
 $1$ & $1$ & $1$ \\
\hline
 $1$ & $2$ & $2$ \\
\hline
 $2$ & $2$ & $3$ \\
\hline
\end{tabular}
 \begin{tikzpicture}[overlay, remember picture, yshift=.25\baselineskip, shorten >=.5pt, shorten <=.5pt]
	\draw [->] ([yshift=3pt]{pic cs:Y3'})  [line width=0.2mm] to ([yshift=3pt]{pic cs:Z3});
  \draw ([yshift=8pt]{pic cs:X3})  edge[line width=0.2mm, out=55,in=125,->] ([yshift=7pt]{pic cs:Z3});
  \end{tikzpicture}
\end{center}

(The components $\mathcal R_T$ and $\mathcal F_T$ are omitted as before). So we must produce again $T_{Y=2}$. First we intervene on $Y$ 

\begin{center}
\begin{tabular}{|c|c|c|}
\hline
 \multicolumn{3}{|c|}{ }\\ 
 \multicolumn{3}{|l|}{X\tikzmark{X4} \, \tikzmark{Y4}Y \tikzmark{Y4'} \ \  \tikzmark{Z4}Z} \\
\hline
 $1$ & $\mathbf{2}$ & $\dots$ \\
\hline
 $1$ & $\mathbf{2}$ & $\dots$ \\
\hline
 $2$ & $\mathbf{2}$ & $\dots$ \\
\hline
\end{tabular}
 \begin{tikzpicture}[overlay, remember picture, yshift=.25\baselineskip, shorten >=.5pt, shorten <=.5pt]
	\draw [->] ([yshift=3pt]{pic cs:Y4'})  [line width=0.2mm] to ([yshift=3pt]{pic cs:Z4});
  \draw ([yshift=8pt]{pic cs:X4})  edge[line width=0.2mm, out=55,in=125,->] ([yshift=7pt]{pic cs:Z4});
  \end{tikzpicture}
\end{center}

and next we update the value of $Z$, taking into account both the values of $X$ and $Y$ in each row (since $Z$ is reached both by arrows coming from $X$ and from $Y$). Notice that now two of the modified rows become identical, so that only one appears in the table:

\begin{center}
\begin{tabular}{|c|c|c|}
\hline
 \multicolumn{3}{|c|}{ }\\ 
 \multicolumn{3}{|c|}{X\tikzmark{X5} \  \tikzmark{Y5}Y\tikzmark{Y5'} \  \tikzmark{Z5}Z} \\
\hline
 $1$ & $\mathbf{2}$ & $2$\\
\hline
 $2$ & $\mathbf{2}$ & $3$\\
\hline
\end{tabular}
 \begin{tikzpicture}[overlay, remember picture, yshift=.25\baselineskip, shorten >=.5pt, shorten <=.5pt]
	\draw [->] ([yshift=3pt]{pic cs:Y5'})  [line width=0.2mm] to ([yshift=3pt]{pic cs:Z5});
  \draw ([yshift=8pt]{pic cs:X5})  edge[line width=0.2mm, out=55,in=125,->] ([yshift=7pt]{pic cs:Z5});
  \end{tikzpicture}
\end{center}

This team can be partitioned into two causal subteams:

\begin{center}
\begin{tabular}{|c|c|c|}
\hline
 \multicolumn{3}{|c|}{ }\\ 
 \multicolumn{3}{|c|}{X\tikzmark{X6} \  \tikzmark{Y6}Y\tikzmark{Y6'} \  \tikzmark{Z6}Z} \\
\hline
 $1$ & $2$ & $2$\\
\hline
\end{tabular}
 \begin{tikzpicture}[overlay, remember picture, yshift=.25\baselineskip, shorten >=.5pt, shorten <=.5pt]
	\draw [->] ([yshift=3pt]{pic cs:Y6'})  [line width=0.2mm] to ([yshift=3pt]{pic cs:Z6});
  \draw ([yshift=8pt]{pic cs:X6})  edge[line width=0.2mm, out=55,in=125,->] ([yshift=7pt]{pic cs:Z6});
  \end{tikzpicture}
\begin{tabular}{|c|c|c|}
\hline
 \multicolumn{3}{|c|}{ }\\ 
 \multicolumn{3}{|c|}{X\tikzmark{X7} \  \tikzmark{Y7}Y\tikzmark{Y7'} \  \tikzmark{Z7}Z} \\
\hline
 $2$ & $2$ & $3$\\
\hline
\end{tabular}
 \begin{tikzpicture}[overlay, remember picture, yshift=.25\baselineskip, shorten >=.5pt, shorten <=.5pt]
	\draw [->] ([yshift=3pt]{pic cs:Y7'})  [line width=0.2mm] to ([yshift=3pt]{pic cs:Z7});
  \draw ([yshift=8pt]{pic cs:X7})  edge[line width=0.2mm, out=55,in=125,->] ([yshift=7pt]{pic cs:Z7});
  \end{tikzpicture}
\end{center}

the first of which satisfies $Z=2$, while the second satisfies $Z=3$. Therefore, by the semantical clause for disjunction,  $T_{Y=2}$ satisfies $Z=2 \lor Z=3$. We may conclude that $T\models Y=2\cf (Z=2\lor Z=3)$. 

\end{example}

\begin{example}
Here we present an example involving nonparametric teams. Our goal is to evaluate $X=1 \cf Y=2$ in \emph{the} nonparametric team $T$ shown in the picture:

\begin{center}
\begin{tabular}{|c|c|c|c|}
\hline
 \multicolumn{4}{|l|}{ } \\
 \multicolumn{4}{|l|}{U\tikzmark{U8} \ \tikzmark{X8}X\tikzmark{X8'} \ \  \tikzmark{Y8}Y\tikzmark{Y8'} \,  \tikzmark{Z8}Z} \\
\hline
 $2$ & $1$ & $2$ & $4$\\
\hline
 $3$ & $1$ & $2$ & $4$\\
\hline
 $1$ & $3$ & $3$ & $1$\\
\hline
 $1$ & $4$ & $1$ & $1$\\
\hline
\end{tabular}
 \begin{tikzpicture}[overlay, remember picture, yshift=.25\baselineskip, shorten >=.5pt, shorten <=.5pt]
  \draw [->] ([yshift=3pt]{pic cs:X8'})  [line width=0.2mm] to ([yshift=3pt]{pic cs:Y8});
	\draw [->] ([yshift=3pt]{pic cs:Y8'})  [line width=0.2mm] to ([yshift=3pt]{pic cs:Z8});
  \draw ([xshift=1,yshift=7pt]{pic cs:X8'})  edge[line width=0.2mm, out=35,in=125,->] ([yshift=6pt]{pic cs:Z8});
	\draw ([yshift=8pt]{pic cs:U8})  edge[line width=0.2mm, out=35,in=125,->] ([yshift=8pt]{pic cs:Z8});
  \end{tikzpicture}
\end{center}

Here we are assuming that $dom(\mathcal F_T(Y))=dom(\mathcal F_T(Z))= \emptyset$; ranges might be given, for example, by $\mathcal R_T(U) = \mathcal R_T(X) = \mathcal R_T(Y) = \mathcal R_T(Z) = \{1,2,3,4\}$.
 In order to evaluate $X=1 \cf Y=2$ we need to generate the causal team $T_{X=1}$. First we intervene on $X$; this will affect all descendants of $X$, which in this case are the (children) $Y$ and $Z$. 

\begin{center}
\begin{tabular}{|c|c|c|c|}
\hline
 \multicolumn{4}{|l|}{ } \\
 \multicolumn{4}{|l|}{U\tikzmark{U9} \, \tikzmark{X9}X\tikzmark{X9'} \ \ \  \tikzmark{Y9}Y\tikzmark{Y9'} \ \ \ \ \tikzmark{Z9}Z} \\
\hline
 $2$ & $\mathbf{1}$ & $\dots$ & $\dots$\\
\hline
 $3$ & $\mathbf{1}$ & $\dots$ & $\dots$\\
\hline
 $1$ & $\mathbf{1}$ & $\dots$ & $\dots$\\
\hline
 $1$ & $\mathbf{1}$ & $\dots$ & $\dots$\\
\hline
\end{tabular}
 \begin{tikzpicture}[overlay, remember picture, yshift=.25\baselineskip, shorten >=.5pt, shorten <=.5pt]
  \draw [->] ([yshift=3pt]{pic cs:X9'})  [line width=0.2mm] to ([yshift=3pt]{pic cs:Y9});
	\draw [->] ([yshift=3pt]{pic cs:Y9'})  [line width=0.2mm] to ([yshift=3pt]{pic cs:Z9});
  \draw ([xshift=1pt, yshift=7pt]{pic cs:X9'})  edge[line width=0.2mm, out=25,in=135,->] ([yshift=6pt]{pic cs:Z9});
	\draw ([yshift=8pt]{pic cs:U9})  edge[line width=0.2mm, out=20,in=135,->] ([yshift=8pt]{pic cs:Z9});
  \end{tikzpicture}
\end{center}

Notice, however, that we cannot yet evaluate $Z$ (which is a function of $U,X$ and $Y$) unless we first update $Y$:

\begin{center}
\begin{tabular}{|c|c|c|c|}
\hline
 \multicolumn{4}{|l|}{ } \\
 \multicolumn{4}{|l|}{U\tikzmark{U10} \, \tikzmark{X10}X\tikzmark{X10'} \ \   \tikzmark{Y10}Y\tikzmark{Y10'} \ \,   \tikzmark{Z10}Z} \\
\hline
 $2$ & $1$ & $\mathbf{2}$ & $\dots$\\
\hline
 $3$ & $1$ & $\mathbf{2}$ & $\dots$\\
\hline
 $1$ & $1$ & $\mathbf{2}$ & $\dots$\\
\hline
\end{tabular}
 \begin{tikzpicture}[overlay, remember picture, yshift=.25\baselineskip, shorten >=.5pt, shorten <=.5pt]
  \draw [->] ([yshift=3pt]{pic cs:X10'})  [line width=0.2mm] to ([yshift=3pt]{pic cs:Y10});
	\draw [->] ([yshift=3pt]{pic cs:Y10'})  [line width=0.2mm] to ([yshift=3pt]{pic cs:Z10});
  \draw ([xshift = 1pt,yshift=7pt]{pic cs:X10'})  edge[line width=0.2mm, out=25,in=135,->] ([yshift=6pt]{pic cs:Z10});
	\draw ([yshift=8pt]{pic cs:U10})  edge[line width=0.2mm, out=20,in=135,->] ([yshift=8pt]{pic cs:Z10});
  \end{tikzpicture}
\end{center}

Finally we update $Z$, but we have a surprise:

\begin{center}
\begin{tabular}{|c|c|c|c|}
\hline
 \multicolumn{4}{|l|}{ } \\
 \multicolumn{4}{|l|}{U\tikzmark{U11} \, \tikzmark{X11}X\tikzmark{X11'} \ \   \tikzmark{Y11}Y\tikzmark{Y11'} \, \; \; \;  \tikzmark{Z11}Z} \\
\hline
 $2$ & $1$ & $2$ & $\mathbf{4}$\\
\hline
 $3$ & $1$ & $2$ & $\mathbf{4}$\\
\hline
 $1$ & $1$ & $2$ & $\hat f_Z(1,1,2)$\\
\hline
\end{tabular}
 \begin{tikzpicture}[overlay, remember picture, yshift=.25\baselineskip, shorten >=.5pt, shorten <=.5pt]
  \draw [->] ([yshift=3pt]{pic cs:X11'})  [line width=0.2mm] to ([yshift=3pt]{pic cs:Y11});
	\draw [->] ([yshift=3pt]{pic cs:Y11'})  [line width=0.2mm] to ([yshift=3pt]{pic cs:Z11});
  \draw ([xshift = 1pt, yshift=7pt]{pic cs:X11'})  edge[line width=0.2mm, out=25,in=150,->] ([yshift=6pt]{pic cs:Z11});
	\draw ([yshift=8pt]{pic cs:U11})  edge[line width=0.2mm, out=20,in=140,->] ([yshift=8pt]{pic cs:Z11});
  \end{tikzpicture}
\end{center}

The information contained in the team $T^-$ is insufficient for the evaluation of the $Z$-value of the last row of $T_{X=1}$. Since the triple $(1,1,2)$ is not in the domain of $\mathcal F_T(Z)$, 
 the best we could do is to fill that part of the table with a formal term.  Here we wrote $\hat f_Z$ as a formal symbol distinguished from the function $f_Z$.
In general, in case of repeated interventions, we can expect also complex terms, with incapsulated function symbols, to be produced. 
Notice however that we have no uncertainties about the $Y$ column; so, it is natural to state that $T_{X=1}\models Y=2$, and that, therefore, $T\models X=1 \cf Y=2$. 
What if we were instead trying to evaluate some statement about $Z$ under these same counterfactual circumstances? We might renounce completely to assign truth values to such statements; or we might perhaps add a further truth value, which holds of such statement if the statement is \emph{admissible} given the form of the terms involved. A precise semantical definition is not trivial, and we address it later.
\end{example}

\begin{example}
Consider now a causal team $S$ which is identical to that which was considered in the previous example, except for that its functional component, instead of being empty, contains at least a partial description of the invariant function for $Z$; we assume that $\mathcal{F}_S(Z) = f_Z$ is such that $(1,1,2)\in dom(f_Z)$, and that $f_Z(1,1,2)=3$. If we now apply the intervention $do(X=1)$ to this causal team, we can use this extra information to explicitly evaluate the last entry of $S_{X=1}$, obtaining the causal team

\begin{center}
$S_{T=1}$: \begin{tabular}{|c|c|c|c|}
\hline
 \multicolumn{4}{|l|}{ } \\
 \multicolumn{4}{|l|}{U\tikzmark{U11BIS} \, \tikzmark{X11BIS}X\tikzmark{X11BIS'} \ \   \tikzmark{Y11BIS}Y\tikzmark{Y11BIS'} \,   \tikzmark{Z11BIS}Z} \\
\hline
 $2$ & $1$ & $2$ & $4$\\
\hline
 $3$ & $1$ & $2$ & $4$\\
\hline
 $1$ & $1$ & $2$ & $\mathbf{3}$\\
\hline
\end{tabular}
 \begin{tikzpicture}[overlay, remember picture, yshift=.25\baselineskip, shorten >=.5pt, shorten <=.5pt]
  \draw [->] ([yshift=3pt]{pic cs:X11BIS'})  [line width=0.2mm] to ([yshift=3pt]{pic cs:Y11BIS});
	\draw [->] ([yshift=3pt]{pic cs:Y11BIS'})  [line width=0.2mm] to ([yshift=3pt]{pic cs:Z11BIS});
  \draw ([xshift = 1pt, yshift=7pt]{pic cs:X11BIS'})  edge[line width=0.2mm, out=25,in=150,->] ([yshift=6pt]{pic cs:Z11BIS});
	\draw ([yshift=8pt]{pic cs:U11BIS})  edge[line width=0.2mm, out=20,in=140,->] ([yshift=8pt]{pic cs:Z11BIS});
  \end{tikzpicture}
\end{center}

\end{example}

\subsection{Formal definition of interventions and counterfactuals} \label{SUBSDEFINT}

All the examples in the previous subsection have one aspect in common: the graphs underlying causal teams are acyclic. In this case the corresponding causal team is called \emph{recursive}, by analogy with the literature on structural equation models; the examples show that, for these kinds of causal teams, the notion of intervention is naturally conceived in algorithmic terms. We now move towards a precise definition.


We have learned a number of lessons from the previous examples:
\begin{enumerate}[A.]
\item An intervention $do(X=x)$ amounts to 1) setting the whole $X$-column to $x$; 2) eliminating all arrows that enter into $X$; 3) updating the columns that correspond to descendants of $X$.
\item It might not be possible to update all the descendants of $X$ in a single step (actually, the order of updating might not be trivial to decide).
\item The information encoded in the causal team might be insufficient for generating, under intervention,  a proper causal team; we must then admit teams which assign formal terms to some variables.
\end{enumerate}

We begin by addressing this last problem. Given a graph $G$ whose set of vertices $V$ is a set of variables, we call $L_G$ the set of function symbols $\hat f_X$ (of arity $card(PA_X)$), for each $X\in V$ (actually, we only need one symbol for each endogenous variable, that is, for variables which have indegree at least one). We call $G$-terms the terms generated from variables in $V$ and from symbols in $L_G$ by the obvious inductive rules; the set of $G$-terms will be denoted as $Term_G$. When speaking of a causal team $T$, we will implicitly assume, from now on, that the ranges of all variables contain the set of terms $Term_{G(T)}$. Actually, it is not difficult to prove that (iterated) interventions (as defined below) on a recursive causal team with finite variable ranges  can generate only a finite number of formal terms. Therefore, in principle a finite causal team could always be extended to a \emph{finite} causal team with formal terms, by using an appropriate finite subset of $Term_{G(T)}$ to extend the ranges of variables. 

We may combine these ideas with those of subsection \ref{SUBSEXPTEAM}. Given a causal team $T$, let $T'$ be the causal team obtained by extending the ranges of variables with formal terms, as described above. Form then the corresponding explicit team $T^{FE}:= (T')^E$ following the construction given in subsection \ref{SUBSEXPTEAM}. We call $T^{FE}$ the \textbf{fully explicit} causal team corresponding to $T$. The invariant functions of $T^{FE}$ now encode sufficient information for performing any kind of intervention that we will consider in this paper. The fact that the ranges of variables contain formal terms solves issue C. The fact that the team is explicit ensures that no information is lost after an intervention.

Now we address problem $B$. How do we decide the order of updating for the columns? To understand the problem, observe the graph in the figure:

\begin{figure}[htbp]
	\centering
		\begin{tikzpicture}
		
		\node(X) {$X$};
		
		\node(Y) at (1.6,0) {$Y$};
		
		\node(Z)  at (0.8, 0.8) {$Z$}; 
		
		\draw[->] (X)--(Y) ; 
		
		\draw[->] (X)--(Z); 
		
		\draw[->] (Z)--(Y); 
		
		\end{tikzpicture}
  \label{fig:BEXAMPLE}
\end{figure}

$Y$ is connected to $X$ by an arrow, so one might think that, in an intervention on $X$, it is possible to update $Y$ immediately after updating $X$. However, this is impossible, because also the updated values of $Z$ are needed in the evaluation of the new values for $Y$. The point is that, from $X$ to $Y$, there is a longer path than the direct one. This suggests defining a distance from $X$, $d(X,\cdot)$, given by the maximum length of paths from $X$ to $\cdot$. Nodes at distance 1 can be immediately evaluated after updating $X$; at this point it is safe to update nodes at distance 2; and so on. Nodes that are not accessible by directed paths from $X$ will be assigned a negative distance and excluded from the updating procedure. Of course, this strategy will work provided there are no loops in the graph, or at least no loops in the part of the graph which is accessible by directed paths from $X$ (the set of descendants of $X$). Since in the applications it is very common to have conjunctive interventions, say $do(X_1=x_1 \land\dots \land X_n=x_n)$, we will define, more generally, the notion of distance from a \emph{set} of variables $\mathbf{X}$. In this more complex case, one must consider a reduced graph in which all arrows entering in $\SET{X}$ have been removed, and examine the directed paths of this reduced graph. The reader may think of the $card(\mathbf{X})=1$ case for ease of visualization. 


\begin{df}
Given a graph $G=(\SET V,E)$ and $\SET{X}\subseteq \SET V$, 

\begin{itemize}
\item We denote as $G_{-\SET{X}}=(\SET V,E_{-\SET{X}})$ the graph obtained by removing all arrows going into some vertex of $\SET X$ (i.e., an edge $(V_1,V_2)$ is in $E_{-\SET{X}}$ iff it is in $E$ and $V_2\notin \SET X$). Notice that, in the special case that $\SET{X} = \{X\}$, the set of directed paths of $G_{-\SET{X}}$ starting from $X$ coincides with the set of directed paths of $G$ starting from $X$.

\item Let $Y\in \SET V$. We call \textbf{(evaluation) distance} between $\mathbf{X}$ and $Y$ the value $d_G(\mathbf{X},Y)= sup\{lenght(P)| P \text{  dir. path of $G_{-\SET{X}}$ going from some  $X\in \mathbf{X}$}$ \\
$\text{to } Y\}$. In case no such path exists, we set $d_G(\mathbf{X},Y)=-1$. Clearly, if the graph is finite and acyclic, $d_G(\mathbf{X},Y)\in \mathbb{N}\cup\{-1\}$ for any pair $\mathbf{X},Y$. When the graph is clear from the context, we simply write $d(\SET X,Y)$.
\end{itemize}
\end{df}

Let $T$ be a \emph{fully explicit}, \emph{recursive} causal team of endogenous variables $\SET V$. Let $\mathbf{X} =\{X_1,\dots,X_n\}\subseteq dom(T)$ and $x_1\in Ran(X_1),\dots, x_n\in Ran(X_n)$ corresponding values with the additional property that, if $X_i$ and $X_j$ denote the same variable, then $x_i=x_j$. We define the \textbf{intervention} $do(\bigwedge_{i=1..n} X_i=x_i)$ (in short, $do(\SET{X}=\SET{x})$) as an algorithm\footnote{Notice that our definition allows the presence of many occurrences of the same variable in different conjuncts, provided that all such conjuncts assign the same value to the given variable. We do \emph{not} define interventions associated to contradictory conjunctions. E.g., the intervention $do(X=x \land X=x \land Y=y)$ is defined, and coincides with the intervention $do(X=x \land Y=y)$; while the intervention $do(X=1 \land X=2)$ is not defined.}: \\ 

Stage $0$. Delete all arrows coming into $\SET X$, and replace each assignment $s\in T$ with $s(\SET{x}/\SET{X})$. Denote the resulting team\footnote{A warning: the teams $T_n$ produced before the last step of the algorithm may fail to form a causal team together with the other coomponents described, because of violations of conditions b) and c) of the definition of causal team.} as $T_0$. Replace $\mathcal F_T$ with its restriction $\mathcal F_T'$ to $\SET V\setminus\SET X$. \\

Stage $n+1$. If $\{Z_1,\dots, Z_{k_{n+1}}\}$ is the set of all the variables $Z_j$ such that $d_{G(T)}(\SET{X},Z_j) = n+1$, define a new team $T_{n+1}$ by replacing each $s\in T_n$ with the assignment $s(f_{Z_1}(s(PA_{Z_1}))/Z_1, \dots, f_{Z_{k_{n+1}}}(s(PA_{Z_{k_{n+1}}}))/Z_{k_{n+1}})$. \\ 

End the procedure after step $\hat n = sup\{d_{G(T)}(\SET{X},Z)|Z\in dom(T)\}$.\\

Notice that $\mathcal R_T$ is not modified by the algorithm; and that, except for the modifications to $G(T)$, 
 it would be the same to apply the algorithm to each assignment separately. 
In case the causal team $T$ is recursive but not fully explicit, we should begin the algorithm with an additional step:\\

Step $-1$. Replace $T$ with the corresponding fully explicit causal team $T^{FE}$.\\

In case the intervention $do(\SET{X}=\SET{x})$ is a terminating algorithm on $T$, we define the causal team $T_{\SET{X}=\SET{x}}$ (of endogenous variables $\SET V\setminus \SET X$) as the quadruple $(T^{\hat n},G(T)_X,\mathcal R_T, \mathcal F_T')$ which is produced when $do(\SET{X}=\SET{x})$ is applied to $T$. Actually, relaxing a bit the notion of algorithm, the $do$ algorithm applies as well to causal teams with infinite variable ranges:

\begin{teo}
If $G(T)$ is a finite acyclic graph, then $T_{\SET{X}=\SET{x}}$ is well-defined.
\end{teo}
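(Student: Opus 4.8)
My plan is to split the statement into two parts: (i) the algorithm defining $do(\SET X = \SET x)$ halts after finitely many stages, and (ii) every stage is a well-defined operation whose output assigns to each variable a value in its (formal-term-extended) range, so that the terminal quadruple $(T^{\hat n},G(T)_{-\SET X},\mathcal R_T,\mathcal F_T')$ is genuinely a causal team. Part (i) is almost immediate: since $G(T)$ has vertex set $dom(T)$ and is finite and acyclic, we already noted that $d_{G(T)}(\SET X,Z)\in\mathbb N\cup\{-1\}$ for every $Z$, and $dom(T)$ is finite, so $\hat n=\sup\{d_{G(T)}(\SET X,Z)\mid Z\in dom(T)\}$ is a finite number; hence the procedure ends after Stage $\hat n$. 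The only point worth stressing here is that this uses finiteness of the \emph{graph} alone, not of the variable ranges --- which is exactly the ``relaxed'' sense in which the theorem claims the $do$-algorithm still applies with infinite ranges. I would also note at the outset that we may assume $T$ to be already fully explicit, since Step $-1$ is itself well-defined: the explicit and fully explicit constructions of subsection \ref{SUBSEXPTEAM} are well-defined (conditions b) and c) in the definition of causal team guarantee that each $h^T_Z$ is well-defined, and only finitely many formal terms are ever needed).

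For part (ii) I would prove, by induction on $m$ with $0\le m\le\hat n$, the following statement: the team $T_m$ is well-defined, each of its assignments maps every variable into its extended range, and for every variable $W$ with $d_{G(T)}(\SET X,W)\le m$ the $W$-column of $T_m$ has already reached its final value --- namely the prescribed value $x_i$ if $W=X_i\in\SET X$, the value inherited unchanged from $T$ if $d_{G(T)}(\SET X,W)=-1$, and $h_W$ applied to the (already final) $PA_W$-values if $0\le d_{G(T)}(\SET X,W)\le m$. The base case $m=0$ is clear, since Stage $0$ only overwrites the $\SET X$-columns with the prescribed values and deletes the arrows into $\SET X$. For the inductive step, Stage $m+1$ modifies exactly the variables $Z_j$ with $d_{G(T)}(\SET X,Z_j)=m+1$, and the key point is a path-extension argument: since $m+1\ge 1$ forces $Z_j\notin\SET X$, every edge $W\to Z_j$ survives in $G(T)_{-\SET X}$, so any directed path of $G(T)_{-\SET X}$ from $\SET X$ to $W$ extends to one ending at $Z_j$; hence every parent $W$ of $Z_j$ either lies in $\SET X$, or has $d_{G(T)}(\SET X,W)\le m$, or has $d_{G(T)}(\SET X,W)=-1$, and in all three cases the inductive hypothesis gives that its value in $T_m$ is final and in the extended range. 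The same argument shows that no two variables of distance $m+1$ are adjacent, so the simultaneous replacement at Stage $m+1$ is unambiguous and each new $Z_j$-value is independent of the others; and since $T$ is fully explicit, each $\mathcal F_T(Z_j)=h_{Z_j}$ is a total function from the extended $Ran(PA_{Z_j})$ into the extended $Ran(Z_j)$, so $h_{Z_j}(s(PA_{Z_j}))$ is always defined and in range. This produces $T_{m+1}$, and the variables of distance $\le m$ or $=-1$ are untouched, so their columns remain final.

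Once the induction reaches $m=\hat n$ I would conclude that $T^{\hat n}$ is well-defined with all values in the extended ranges --- this is condition a) for the quadruple $(T^{\hat n},G(T)_{-\SET X},\mathcal R_T,\mathcal F_T')$ regarded as a causal team with endogenous set $\SET V\setminus\SET X$ --- and then verify conditions b) and c). Fix $Y\in\SET V\setminus\SET X$; since $Y\notin\SET X$, deleting arrows into $\SET X$ does not change the arrows into $Y$, so $PA_Y$ is the same in $G(T)$ and in $G(T)_{-\SET X}$ and $\mathcal F_T'(Y)=h_Y$. If $d_{G(T)}(\SET X,Y)\ge 0$, the inductive claim gives $s(Y)=h_Y(s(PA_Y))$ for every $s\in T^{\hat n}$; since $h_Y$ is total this is exactly c), and it also yields b). If $d_{G(T)}(\SET X,Y)=-1$, the path-extension argument shows that every parent of $Y$ also has distance $-1$, so the $PA_Y$- and $Y$-columns of $T^{\hat n}$ coincide, row by row, with those of $T$; since $T$, being fully explicit, already satisfies b) and c) for $Y$, so does $T^{\hat n}$. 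Hence the output is a genuine (indeed fully explicit) causal team, which is precisely what ``$T_{\SET X=\SET x}$ is well-defined'' means.

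I expect the only real obstacle to be the bookkeeping in the second paragraph: confirming that the evaluation distance schedules the updates correctly --- i.e.\ that by the time Stage $m+1$ reaches a variable $Z_j$ all of its parents have been assigned their definitive values, and that ``off-path'' variables (distance $-1$) are simply carried along unchanged. This is where acyclicity is genuinely used, both through $\hat n<\infty$ and through the path-extension argument; termination and the verification of the causal-team axioms for the output are then routine.
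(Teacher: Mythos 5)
Your proof is correct, and on the termination question it coincides with the paper's: both arguments rest on the observation that in a finite acyclic graph the evaluation distance $d_{G(T)}(\SET X,\cdot)$ is bounded (the paper derives the bound $\leq card(G(T))$ by noting that a longer path would revisit a vertex and hence produce a cycle), so the staged procedure halts. Where you diverge is in scope: the paper's proof is \emph{only} about termination --- it checks that each stage touches finitely many variables and assignments, and for infinite $T^-$ it simply switches to simultaneous substitution --- and it never verifies that the terminal quadruple actually satisfies conditions a)--c) of the definition of a causal team, nor that the update schedule is consistent (that all parents of a stage-$(m+1)$ variable are finalized by stage $m$). Your part (ii) supplies exactly these missing verifications, via a path-extension argument that is essentially Lemma \ref{LEMPA} of the paper (there proved later and used for the import/export theorems rather than here). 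So your proof establishes strictly more than the paper's: the paper shows the algorithm halts, while you also show its output is a genuine (fully explicit) causal team, which is arguably what ``well-defined'' ought to mean. Your handling of infinite ranges --- noting that termination depends only on finiteness of the graph, and treating each stage as one simultaneous replacement --- also absorbs the paper's separate infinite-$T^-$ case cleanly.
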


\begin{proof}
We can assume without loss of generality that $T$ is fully explicit.

Assume also, at first, that $T^-$ is finite.

Suppose that, for $\SET{X}\subseteq dom(T), Y\in dom(T)$, $d(\SET X,Y)> card(G(T))$. Then there is a path $P$ from some $X\in\mathbf{X}$ to $Y$ such that $lenght(P)>card(G(T))$. Then there is a node which is crossed at least twice by $P$; so $G(T)$ contains a cycle: contradiction. 

Therefore, $sup\{d(\mathbf{X},Z)|Z\in dom(T)\}\leq card(G(T))$; this means that the ``for'' cycle in the algorithm goes through a finite number of iterations over the variable $n$.

Finally, notice that, for each $n$, there are a finite number of variables $Z$ such that $d(\SET{X},Z)=n$ (due to finiteness of $G(T)$) and a finite number of assignments $t$ in the team that is undergoing modification (due to the finiteness of $T^-$). Therefore, the algorithm terminates after a finite number of steps.

If instead $T^-$ is infinite, we can replace $do(\SET{X}=\SET{x})$ with an infinitary algorithm which, in each iteration of the ``for'' cycle, performs simultaneously the substitution $t \hookrightarrow t(f_{Z_1}(s(PA_{Z_1}))/Z_1, \dots, f_{Z_{k_{n+1}}}(s(PA_{Z_{k_{n+1}}}))/Z_{k_{n+1}})$ for all the assignments $t$ in the current team. By the arguments above, such an ``algorithm'' terminates and yields a well-defined causal team.
\end{proof}

In case a causal team is not recursive (i.e., its graph is cyclic), the algorithm above may well fail to terminate. In this case, a definition of the intervention $do(\SET{X}=\SET{x})$ could still be given in terms of the set of solutions of the modified system of structural equations (in which the equations for $\SET{X}$ are replaced by $\SET{X}:=\SET{x}$), provided the team is parametric. Each assignment $s\in T$ is a solution of the system of structural equations encoded in $\mathcal F_T$ (this is ensured by parametricity and conditions a) and c) in the definition of causal team). Galles and Pearl (\cite{GalPea1998}) consider the case of systems with unique solutions, defined as follows: 1) for fixed values of the exogenous variables, the system has a unique solution, and 2) each ``intervened'' system of equations obtained from the initial one replacing some equations of the form $X:=f_X(PA_X)$ with constant equations $X:=x$ still has a unique solution for each choice of values for the exogenous variables. In case the $\mathcal F_T$ component of a team encodes a system with unique solutions, then the natural way to define an intervention $do(X=x)$ on the team is to replace each assignment $s\in T^-$ with the (unique) assignment $t$ which encodes the solution of the intervened system for the choice $s(\SET{U})$ of values to the exogenous variables\footnote{In case the intervention acts also on some of the exogenous variables, this idea should be modified in an obvious way.}.  The definition of the other components of the causal team produced by the intervention is straightforward. Over recursive parametric causal teams the team so obtained coincides with the causal team that is produced by the algorithm. 

It is not difficult to extend the previous ideas to systems that may have no solution at all. In case the intervened system obtained replacing the appropriate equations with $\SET{X}=\SET{x}$ has no solution corresponding to the choice $s(\SET{U})$ for the exogenous variables, then no assignment should correspond to $s$ in the intervened team $T_{\SET X=\SET x}$. Although this extension is straightforward, we can expect significant changes in the underlying logic.

 The case of systems with multiple solutions is more problematic. If many solutions correspond to an assignment of the initial team, should we include them all in the intervened team? And, for what regards the probabilistic developments of the following sections, should we consider all the assignments thus produced as equiprobable? These matters should probably be settled according to the kind of interpretation we want to give to nonrecursive causal teams in a given application, and one is lead to the classical problems of interpretation of nonrecursive causal models (see e.g \cite{StrWol1960}). It might also be reasonable to model such an intervention as producing not one, but multiple teams, corresponding to possible different outcomes of the intervention. This set of ``accessible teams'' would then induce a nontrivial modality, and it would then be reasonable to treat counterfactuals as necessity operators in a dynamic logic setting (in the spirit of \cite{Hal2000}). This would lead us far away from the approach of the present paper, in which we focus on the nonproblematic recursive case. We now return to it.

Having defined the intervened team $T_{\SET X=\SET x}$, we are immediately led to a semantical clause for counterfactuals of the form $\SET X=\SET x \cf \psi$:

\[
T\models \SET X=\SET x \cf \psi \iff T_{\SET X=\SET x} \models \psi.
\]
In case the antecedent is inconsistent (i.e., it contains two conjuncts $X_i = x_i, X_i = x_i'$ with $x_i \neq x_i'$), the corresponding intervention is not defined; in this case, we postulate the counterfactual to be (trivially) true.

\subsection{Logical languages}

We call the (basic) \emph{language of causal dependence}, $\mathcal{CD}$, the language formed by the following rules:
\[
Y = y \ | \ Y\neq y \ | \ \dep{\SET X}{Y} \ | \ \psi \land \chi \ | \ \psi \land \chi \ | \ \theta \supset \chi \ | \ \SET X= \SET x \cf \chi
\]
for $Y,\SET{X}$ variables, $y,\SET x$ values, $\psi,\chi$ formulae of $\CD$, $\theta$ classical formula. 

If we interpret this language by the semantical clauses introduced so far, it can be shown that:

\begin{teo}
The logic $\mathcal{CD}$ is downwards closed, that is: if $\varphi\in \mathcal{CD}$, $T$ is a parametric causal team with at most unique solutions, $T'$ is a causal subteam of $T$, and $T\models\varphi$, then also $T'\models\varphi$.
\end{teo}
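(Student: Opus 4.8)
The plan is to argue by induction on the structure of the formula $\varphi\in\CD$. A preliminary observation keeps the induction clean: the properties ``parametric'' and ``has at most unique solutions'' depend only on the components $G(T)$, $\mathcal R_T$, $\mathcal F_T$ of a causal team, and every sub-object we pass to in the course of the induction -- a causal subteam, the restricted team $T^\theta$, an intervened team $T_{\SET{X}=\SET{x}}$ -- either leaves these components untouched or modifies them in a way explicitly allowed by the hypothesis (restriction of $\mathcal F_T$ to $\SET V\setminus\SET X$ together with constant equations, re-indexing of arrows into $\SET X$); hence the inductive hypothesis will always be applicable to the relevant subformula over the relevant (sub)team. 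Moreover, since $T$ is parametric it is explicit and equals its associated explicit team, so that a causal subteam of $T$ is just a causal team with the same graph, ranges and functions whose underlying team is a subset of $T^-$. The atomic cases are then immediate: the clauses for $Y=y$ and $Y\neq y$ are universally quantified over $s\in T^-$, and the clause for $\dep{\SET X}{Y}$ is universally quantified over pairs from $T^-$, so since $(T')^-\subseteq T^-$ each is inherited by $T'$. The case $\psi\land\chi$ follows directly from the inductive hypothesis applied to $\psi$ and to $\chi$.

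For $\psi\lor\chi$, take a splitting $T_1,T_2$ of $T$ witnessing $T\models\psi\lor\chi$ and set $(T_i')^-:=T_i^-\cap(T')^-$, keeping the graph, ranges and functions. One checks that $T_1',T_2'$ are causal subteams of $T_1,T_2$ respectively (automatic, since for parametric teams every subset of the assignments yields a causal team) and also of $T'$, that $(T_1')^-\cup(T_2')^-=(T')^-$, and then applies the inductive hypothesis to get $T_1'\models\psi$ and $T_2'\models\chi$, whence $T'\models\psi\lor\chi$. For selective implication $\theta\supset\chi$ with $\theta$ classical, the point is that $(T')^\theta$ is a causal subteam of $T^\theta$: its underlying team is $\{s\in(T')^-\mid\{s\}\models\theta\}\subseteq\{s\in T^-\mid\{s\}\models\theta\}=(T^\theta)^-$, while the $(\cdot)^\theta$ operation does not alter $G$, $\mathcal R$, $\mathcal F$; since $T\models\theta\supset\chi$ means $T^\theta\models\chi$, the inductive hypothesis applied to $\chi$ over $T^\theta$ (which is again parametric with at most unique solutions) and its subteam $(T')^\theta$ yields $(T')^\theta\models\chi$, i.e. $T'\models\theta\supset\chi$.

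The real work, and the step I expect to be the main obstacle, is the counterfactual $\SET{X}=\SET{x}\cf\chi$. If the antecedent is inconsistent both sides are trivially true, so assume it is consistent. I would show that $T'_{\SET{X}=\SET{x}}$ is a causal subteam of $T_{\SET{X}=\SET{x}}$; the result then follows from the inductive hypothesis applied to $\chi$, once we note -- via the preliminary observation -- that $T_{\SET{X}=\SET{x}}$ is itself parametric with at most unique solutions (the restriction of $\mathcal F_T$ to $\SET V\setminus\SET X$ still consists of total functions, because the parent sets of the surviving endogenous variables are unchanged, and the intervened, and re-intervened, systems are exactly those quantified over in the hypothesis). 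For the subteam relation, observe first that $G(T')=G(T)$, $\mathcal R_{T'}=\mathcal R_T$ and $\mathcal F_{T'}=\mathcal F_T$, so the graph, range and function components produced by the intervention -- $G(T)_{-\SET X}$, $\mathcal R_T$, and the restriction of $\mathcal F_T$ -- are literally the same for $T$ and for $T'$. It remains to check $(T'_{\SET{X}=\SET{x}})^-\subseteq(T_{\SET{X}=\SET{x}})^-$, and here I would invoke the fact, stressed right after the definition of the $do$-algorithm, that the intervention acts assignment-wise: after Stage $0$ each $s\in(T')^-$ is replaced by the same assignment $s(\SET{x}/\SET{X})$ by which it is replaced inside $T$, and at each later stage the new value of $s$ at a variable $Z$ with $d_{G(T)}(\SET X,Z)=k$ is computed as $f_Z(s(PA_Z))$ using the same graph and functions; since the evaluation distances and the stopping index $\hat n$ depend only on $G(T)=G(T')$ and on $dom(T)=dom(T')$, the two runs of the algorithm proceed in lockstep, and the image of the smaller starting set $(T')^-$ lies inside the image of $T^-$. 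For the non-recursive case one argues the same way using the solution-based definition: $(T_{\SET{X}=\SET{x}})^-$ is the set of (unique) solutions of the intervened system over those exogenous tuples in $\{s(\SET U)\mid s\in T^-\}$ that admit a solution, and this set shrinks when $T^-$ shrinks to $(T')^-$. The two points requiring a little care are the preprocessing to the fully explicit team (harmless here, since over a parametric team the $do$-algorithm never feeds formal-term inputs to the invariant functions, so $T^{FE}$ carries the same underlying assignments and yields the same intervened team as $T$) and the case in which $\SET X$ meets the exogenous variables (treated by the obvious analogue of the construction); neither affects the commutation argument, which is why I regard the bookkeeping around the intervention -- rather than any single hard idea -- as the crux of the proof.
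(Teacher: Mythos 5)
Your proof is correct and follows essentially the same route as the paper's: induction on the syntax, with the two key observations that $(T')^{\theta}$ is a causal subteam of $T^{\theta}$ for selective implication, and that the $do$-algorithm acts on each assignment separately so that $(T'_{\SET{X}=\SET{x}})^-\subseteq(T_{\SET{X}=\SET{x}})^-$ for counterfactuals. The paper dismisses the atomic and propositional cases as routine and omits the bookkeeping about parametricity, explicitness and the non-recursive case that you spell out, but the substance of the argument is the same.
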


\begin{proof}
We prove it by induction on the synctactical structure of $\varphi$. The atomic and propositional cases are routine.

Suppose $T\models \psi \supset \chi$. Then $T^{\psi}\models \chi$. Now $T'^{\psi}\subseteq T^{\psi}$; therefore, by inductive hypothesis, $T'^{\psi}\models \chi$. Thus, by the semantic clause for selective implication, $T'\models  \psi \supset \chi$.

Suppose $T\models \SET X= \SET x \cf \chi$. Then $T_{\SET X= \SET x}\models \chi$. But since the algorithm $do(\SET X= \SET x)$ acts on each assignment separately, we can conclude that $(T'_{\SET X= \SET x})^-\subseteq (T_{\SET X= \SET x})^-$. So, by the inductive hypothesis, $T'_{\SET X= \SET x}\models \chi$. Applying again the semantic clause, $T'\models \SET X= \SET x \cf \chi$.
\end{proof}

\begin{teo}
The logic $\mathcal{CD}$ has the empty set property, that is: for every $\varphi\in \mathcal{CD}$, $\emptyset\models\varphi$.
\end{teo}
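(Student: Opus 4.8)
The plan is to prove this by induction on the syntactic structure of $\varphi$, exactly as in the preceding downward-closure theorem, since the empty set property is really just a base-case-flavoured claim that follows the same inductive skeleton. The key observation is that the empty causal team $\emptyset$ (i.e., the causal team whose underlying team $\emptyset^-$ is the empty set of assignments, with whatever graph, ranges and functions) vacuously satisfies everything, because all the semantic clauses for atoms are universally quantified over $s \in T^-$, and all the clauses for connectives and implications either reduce to subteams (which are again empty) or to the empty team itself.

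\textbf{The inductive argument.} For the atomic cases, $\emptyset \models Y=y$, $\emptyset \models Y \neq y$, and $\emptyset \models \dep{\SET X}{Y}$ all hold vacuously: the defining conditions quantify over $s \in \emptyset^-$ (or pairs $s,s' \in \emptyset^-$), and there are none. For conjunction, $\emptyset \models \psi \land \chi$ follows immediately from the inductive hypotheses $\emptyset \models \psi$ and $\emptyset \models \chi$. For disjunction, I take $T_1 = T_2 = \emptyset$; then $T_1^- \cup T_2^- = \emptyset = \emptyset^-$, and by the inductive hypothesis both $\emptyset \models \psi$ and $\emptyset \models \chi$, so $\emptyset \models \psi \lor \chi$. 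For selective implication $\theta \supset \chi$: the reduced subteam $\emptyset^\theta$ has underlying team $\{s \in \emptyset^- \mid \{s\} \models \theta\} = \emptyset$, so $\emptyset^\theta$ is again the empty causal team, and by the inductive hypothesis $\emptyset^\theta \models \chi$, hence $\emptyset \models \theta \supset \chi$. For the counterfactual $\SET X = \SET x \cf \chi$: if the antecedent is inconsistent the counterfactual is trivially true by fiat; otherwise the intervention algorithm $do(\SET X = \SET x)$ acts on each assignment separately, so applied to the empty team it produces the empty team (with modified graph and function components), i.e., $\emptyset_{\SET X = \SET x}$ is again the empty causal team, and by the inductive hypothesis $\emptyset_{\SET X = \SET x} \models \chi$, whence $\emptyset \models \SET X = \SET x \cf \chi$.

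\textbf{The main obstacle}, such as it is, is mostly bookkeeping: one should be slightly careful that the empty causal team is a legitimate object in all the relevant classes (it trivially satisfies conditions a), b), c) of the definition of causal team, and is trivially explicit, parametric, and recursive), and that the operations $T \mapsto T^\theta$ and $T \mapsto T_{\SET X = \SET x}$ genuinely send the empty causal team to (a copy of) the empty causal team rather than to something undefined — this is immediate from the fact that both operations only remove assignments or transform them pointwise, and modify the graph/range/function data in ways that do not depend on $T^-$ being nonempty. Alternatively, one could derive this corollary-style from the downward closure theorem together with the observation that the empty team is a causal subteam of every causal team and that \emph{some} causal team satisfying $\varphi$ exists — but this would require first producing a satisfying team, so the direct induction above is cleaner and self-contained.
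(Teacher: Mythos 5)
Your proof is correct and follows essentially the same route as the paper's: induction on the syntax, with the atomic and propositional cases handled vacuously or by taking both disjuncts to be the empty subteam, and the $\supset$ and $\cf$ cases reduced to the observations that $\emptyset^{\theta}=\emptyset$ and that the $do$ algorithm sends the empty team to the empty team. Your explicit appeal to the inductive hypothesis in the $\supset$ case is in fact slightly cleaner than the paper's phrasing, but the argument is the same.
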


\begin{proof}
By induction on the syntax of the formula. The new cases are those for $\supset$ and $\cf$.

Let $\varphi$ be $\psi\supset\chi$. Notice that $\emptyset^{\psi}= \emptyset$. Trivially, any assignment in it satisfies $\chi$. So $\emptyset\models \psi \supset \chi$.

Let $\varphi$ be of the form $\SET{X} = \SET{x}\cf \chi$. Any $do$ algorithm, applied to $\emptyset$, produces $\emptyset$ again; therefore, by the inductive hypothesis, $\emptyset_{\SET X= \SET x}= \emptyset\models \chi$. Thus, by the clause for counterfactuals we obtain $\emptyset\models \SET X= \SET x\cf \chi$.
 \end{proof}

We also give names to some relevant fragments of $\CD$:
\begin{itemize}
\item $\C$: the \emph{counterfactual fragment}, where symbols $\dep{\cdot}{\cdot}$ and $\supset$ are not allowed

\item $\CO$: the \emph{causal-observational fragment} where $\dep{\cdot}{\cdot}$ is not allowed.
\end{itemize}
We may also define variants $\CO^{neg},\C^{neg}$ that are closed (except for antecedents of counterfactuals) under a dual negation $\neg$, which satisfies the additional semantical clause:
\begin{itemize}
\item $T\models \neg\psi \iff $ for all $s\in T^-$, $\{s\}\not \models \psi$.\footnote{This definition is reasonable due to the flatness of $\CO$, which is entailed by theorem \ref{TEOFLAT} below.} 
\end{itemize}


We can show that the language $\CO^{neg}$ (and thus also $\CO,\C^{neg},\C$) satisfies a much more restrictive condition: flatness. Here and in the following, we will often abuse notation and write, say, $\{s\}$ for a causal team $T$ whose support $T^-$ contains the single assignment $s$; we will then write $\{s\}_{\SET X= \SET x}$ for the causal team $T_{\SET X= \SET x}$, and so on.

\begin{teo} \label{TEOFLAT}
The logic $\CO^{neg}$ over unique-solution parametric causal teams is flat, that is: for every formula $\varphi$ of $\CO^{neg}$ and every unique-solution parametric causal team $T$, $T\models \varphi$ iff $\{s\}\models \varphi$ for every assignment $s\in T^-$.
\end{teo}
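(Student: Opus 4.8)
The plan is to prove flatness by induction on the syntactic structure of $\varphi \in \CO^{neg}$. The base of the argument is that all the atomic formulas of this fragment ($Y=y$ and $Y\neq y$) have semantic clauses that are explicitly universally quantified over $s\in T^-$, so flatness for them is immediate from the definition; the propositional connectives $\land$ and $\neg$ are likewise handled routinely (for $\land$ one uses that $T^- = T^-$, and for $\neg$ the clause is again manifestly a universal statement over singletons). The disjunction $\lor$ is the first case requiring a small argument: if $T\models\psi\lor\chi$ via a partition $T_1^- \cup T_2^- = T^-$, then each $s\in T^-$ lies in $T_1^-$ or $T_2^-$, and by induction $\{s\}\models\psi$ or $\{s\}\models\chi$, hence $\{s\}\models\psi\lor\chi$ (splitting the singleton trivially); conversely, from pointwise satisfaction one assembles a partition by grouping the assignments according to which disjunct they satisfy, using downward closure / the empty team property to cover the degenerate pieces.

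The two genuinely new cases are selective implication $\theta\supset\chi$ and the counterfactual $\SET X=\SET x\cf\chi$. For selective implication, the key observation is that the selection operator commutes with passage to singletons: $(T^\theta)^- = \{s\in T^- \mid \{s\}\models\theta\}$, so $T^\theta$ is, up to the team component, the ``union'' of those singletons $\{s\}$ with $\{s\}\models\theta$, and moreover $\{s\}^\theta$ equals $\{s\}$ when $\{s\}\models\theta$ and equals the empty causal team otherwise. Then $T\models\theta\supset\chi$ iff $T^\theta\models\chi$ iff (by induction on $\chi$, which has lower complexity) $\{s\}\models\chi$ for every $s\in (T^\theta)^-$ iff $\{s\}\models\chi$ for every $s\in T^-$ with $\{s\}\models\theta$ iff (using the empty team property, Theorem on $\emptyset\models\varphi$, for the assignments failing $\theta$) $\{s\}^\theta\models\chi$ for every $s\in T^-$ iff $\{s\}\models\theta\supset\chi$ for every $s\in T^-$. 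For the counterfactual, the crucial fact — already emphasized in the excerpt and used in the proof of downward closure — is that the $do(\SET X=\SET x)$ algorithm acts on each assignment separately (modulo the uniform modifications to the graph and the functional component, which do not depend on which assignments are present). Hence $(T_{\SET X=\SET x})^- = \bigcup_{s\in T^-}(\{s\}_{\SET X=\SET x})^-$, and each $\{s\}_{\SET X=\SET x}$ is a causal subteam of $T_{\SET X=\SET x}$; so $T\models\SET X=\SET x\cf\chi$ iff $T_{\SET X=\SET x}\models\chi$ iff (by induction on $\chi$) $\{t\}\models\chi$ for every $t\in(T_{\SET X=\SET x})^-$ iff $\{t\}\models\chi$ for every $t$ in some $\{s\}_{\SET X=\SET x}$ iff $\{s\}_{\SET X=\SET x}\models\chi$ for every $s\in T^-$ iff $\{s\}\models\SET X=\SET x\cf\chi$ for every $s\in T^-$. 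The trivially-true case of an inconsistent antecedent is handled separately and immediately (both sides hold vacuously).

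The main obstacle, and the point that needs care, is making the ``acts on each assignment separately'' claim fully precise in the presence of formal terms and the fully-explicit-team construction: one must check that applying the $do$ algorithm to $T$ and then restricting the resulting team to the rows originating from a single $s$ yields exactly the same assignment as applying $do$ to the singleton causal team $\{s\}$ (equipped with the same graph, ranges, and functional component). Since the parametric unique-solution hypothesis guarantees that each $s\in T^-$ is already a solution of the encoded system and that the intervened system again has a unique solution per choice of exogenous values, the per-assignment update is deterministic and depends only on $s$, $G(T)$, $\mathcal R_T$, $\mathcal F_T$ — not on the other assignments; this is what licenses the decomposition. Once this is granted, every step above is a straightforward unwinding of the semantic clauses together with the inductive hypothesis, and the empty-team property is invoked only to absorb the assignments that the selection operator discards.
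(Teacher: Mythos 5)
Your proof is correct and follows essentially the same route as the paper's: induction on the formula, with the propositional cases routine, the $\supset$ case handled by observing that $\{s\}^\theta$ is either $\{s\}$ or the empty team (invoking the empty team property for the latter), and the $\cf$ case resting on the facts that the $do$ algorithm acts assignment-wise and that, by the unique-solution hypothesis, $\{s\}_{\SET X=\SET x}$ is again a singleton whose element accounts for every row of $(T_{\SET X=\SET x})^-$. The only difference is presentational: you spell out the atomic and connective cases and explicitly flag the need to verify the per-assignment decomposition of the intervention, which the paper leaves implicit by pointing to the details of the $do$ algorithm.
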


\begin{proof}
It can be shown by induction on the complexity of the involved formulas. The cases related to propositional connectives are routine. We only have to check the cases of $\varphi = \SET X= \SET x\cf \chi$ and $\varphi = \psi \supset \chi$.

Case $\cf$) Suppose first that $T\models \SET X= \SET x\cf \chi$, with $\chi$ flat. Then by the semantical clause for counterfactuals, $T_{\SET X= \SET x}\models \chi$. By flatness of $\chi$, $\{t\}\models\chi$ for each $t\in (T_{\SET X= \SET x})^-$. Now, notice that $\{s\}$ is a unique-solution causal team (because $T$ is such), which entails that, for every $s\in T^-$, $\{s\}_{\SET X= \SET x}$ is again a singleton causal team (see the details of the $do$ algorithm). 
 Therefore, by the remarks above, $\{s\}_{\SET X= \SET x}\models \chi$ for each $s\in T$; so, by the semantical clauses, $\{s\}\models \SET X= \SET x\cf \chi$ for every $s\in T$.

In the other direction, suppose $\{s\}\models \SET X= \SET x\cf \chi$ for every $s\in T^-$. Then for every $s\in T^-$, $\{s\}_{\SET X= \SET x}\models \chi$. Again, any such causal team is a singleton set, say $\{s\}_{\SET X= \SET x} =: \{t_s\}$. But notice that, if $t$ is any assignment in $(T_{\SET X= \SET x})^-$, then $t=t_s$ for some $s\in T^-$. So we can apply the inductive hypothesis and conclude that $T_{\SET X= \SET x}\models \chi$. Therefore 
$T\models \SET X= \SET x\cf \chi$.

Case $\supset$) Suppose $T\models \psi \supset \chi$, with $\psi, \chi$ flat.  Then, by the semantical clause for $\supset$, $T^\psi\models \chi$. By induction hypothesis, $\{s\}^\psi=\{s\}\models \chi$ for each $s\in (T^\psi)^-$. 
 So $\{s\}\models \psi\supset \chi$. Suppose instead $s'\in T$ is such that $\{s'\}\not\models\psi$. Then $\{s'\}^\psi = \emptyset$. Now $\{s'\}^\psi = \emptyset \models \chi$; so, by the clause for $\supset$, $\{s'\}\models \psi \supset \chi$. In conclusion, for any $s\in T^-$,  $\{s\}\models \psi \supset \chi$.

Viceversa, suppose $\{s\}\models \psi \supset \chi$ for all $s\in T^-$. Let $s\in T^-$ be such that $\{s\}\models \psi$. Then $\{s\}^\psi = \{s\}$; our assumption then yields $\{s\}\models \chi$. Since $\chi$ is flat, we can conclude that $T^\psi \models \chi$. So $T\models \psi \supset\chi$. 
\end{proof}


This flatness theorem uses in an essential way the recursiveness of the causal team. The previous downward closure and empty set property theorems may work as well in the nonrecursive case, provided a definition of intervention is given also in that case, along the lines sketched at the end of subsection \ref{SUBSDEFINT}.

Notice also that, unlike in similar results for team semantics (see e.g. \cite{Vaa2007}), the truth of a flat formula in a causal team based on a singleton team uses, through structural equations, information that goes beyond that which is encoded in the single assignment of the team. A singleton causal team can be identified with a structural equation model, enriched with a description of an actual state for all variables in the system; therefore, our flatness result can be seen as expressing a sort of conservativity of the causal team semantics over structural equation modeling. As long as one works within a sufficiently poor language, statements on causal teams can be reduced to statements over multiple enriched structural equation models; but as soon as further non-flat language components are introduced, such as dependence atoms, or the probabilities and the intuitionistic disjunction (see later sections), structural equation models become insufficient, and causal teams are needed.

\section{Some logical principles} \label{LOGLAWS}

In this section, we analyze some validities and inference rules for the languages $\CD,\CO$ and $\C$ when their semantics ranges over \emph{parametric, recursive} causal teams (we will assume this restriction throughout the section). Most observations extend also to unique solution causal teams.

We shall write $\psi \models \chi$ to mean that $\chi$ holds on any parametric, recursive causal team on which $\psi$ holds. By $\models\chi$ we mean that $\chi$ holds on any parametric, recursive causal team.

\subsection{The law of excluded middle} \label{SUBSEM}

Does the fact that the logic $\mathcal{CO}$ is flat entail that its propositional logic is classical? The issue is somewhat problematic.

It is well known that in predicate logics of dependence and independence the law of  excluded middle fails; in the propositional case this point is more subtle. We propose two formulations of the law of excluded middle, a weak one and a stronger one that comes closer to the classical version:\\
\[
\hspace{-108pt}(WEM): \hspace{15pt}\models \psi \lor \neg\psi 
\]
\[
(SEM): \hspace{15pt}\text{For any team } T, T\models \psi \text{ or } T\models \neg \psi
\] 
Neither formulation covers the full classical scheme, because for many formulas we lack negation. Within $\CO^{neg}$, instead, SEM fundamentally corresponds to the classical law of excluded middle.


It is very easy to find a counterexample to SEM, even at the atomic level. Notice indeed that the following team 
\begin{center}
\begin{tabular}{|c|}
\hline
 X  \\
\hline
  1 \\
\hline
  2 \\
\hline
\end{tabular}
\end{center}
does not satisfy $X=1$ nor its negation $X\neq 1$.

With regard to WEM, 
we get:

\begin{teo}
All formulas in $\CO^{neg}$ satisfy WEM.
\end{teo}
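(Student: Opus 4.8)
The plan is to prove WEM for $\CO^{neg}$ by induction on the syntactic structure of the formula $\psi$, exploiting the flatness of $\CO^{neg}$ established in Theorem \ref{TEOFLAT}. The key observation is that by flatness, both $\psi$ and $\neg\psi$ are determined by their behaviour on singleton causal teams, and on a singleton team the classical dichotomy holds: for each $s\in T^-$, either $\{s\}\models\psi$ or $\{s\}\not\models\psi$, and by the semantic clause for the dual negation, $\{s\}\not\models\psi$ is equivalent to $\{s\}\models\neg\psi$ (again using flatness of $\psi$, so that $\{s\}\not\models\psi$ really means "$\psi$ fails at the single assignment $s$").

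The main step is then to use the semantic clause for disjunction. Given an arbitrary parametric recursive causal team $T$, I would split $T^-$ into $A = \{s\in T^- \mid \{s\}\models\psi\}$ and $B = \{s\in T^- \mid \{s\}\not\models\psi\} = \{s\in T^-\mid \{s\}\models\neg\psi\}$. These are disjoint and their union is $T^-$, so the corresponding causal subteams $T_A$ and $T_B$ (obtained by keeping the same graph, ranges, and functional component, and restricting the team component — legitimate since, as noted after the definition of causal subteam, subteams of explicit/recursive causal teams are unproblematic) witness $T_A^- \cup T_B^- = T^-$. By flatness of $\psi$ applied to $T_A$ (all of whose singletons satisfy $\psi$) we get $T_A\models\psi$; by flatness of $\neg\psi$ applied to $T_B$ we get $T_B\models\neg\psi$. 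Hence by the semantic clause for $\lor$, $T\models\psi\lor\neg\psi$.

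I do not actually need an induction here if I invoke flatness as a black box: flatness of $\CO^{neg}$ already gives me that every formula and its negation are flat, and the argument above is then uniform. The one point requiring care — and the likely main obstacle — is making sure the dual negation behaves correctly: I must check that $\{s\}\models\neg\psi$ iff $\{s\}\not\models\psi$, which follows directly from the semantic clause $T\models\neg\psi \iff$ for all $s\in T^-$, $\{s\}\not\models\psi$, instantiated at the singleton team $\{s\}$ (the footnote attached to that clause flags precisely that this is well-behaved because of flatness). A second minor point: WEM is stated for an arbitrary team, so I should note that the empty team satisfies $\psi\lor\neg\psi$ trivially via the empty set property (Theorem on the empty set property), or simply observe the partition argument degenerates harmlessly when $T^- = \emptyset$.

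So the proof is essentially: fix $T$; partition $T^-$ according to whether the singleton satisfies $\psi$; the two halves witness the disjunction by flatness of $\psi$ and of $\neg\psi$ respectively. The subtlety worth spelling out is only the equivalence between failing $\psi$ at a point and satisfying $\neg\psi$ at that point, plus the remark that forming these two subteams is legitimate for recursive (explicit) causal teams.
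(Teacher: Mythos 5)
Your proof is correct and is essentially identical to the paper's: the paper also partitions $T^-$ into $T_1=\{s\mid\{s\}\models\varphi\}$ and $T_2=\{s\mid\{s\}\not\models\varphi\}=\{s\mid\{s\}\models\neg\varphi\}$, applies flatness to each part, and concludes via the disjunction clause. Your extra remarks about the dual-negation equivalence on singletons and the empty-team case are just the details the paper leaves implicit.
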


\begin{proof}
Let $\varphi$ be a formula of $\CO^{neg}$. Let $T$ be a team. Let $T_1 = \{s\in T^-| \{s\}\models\varphi\}$, and $T_2 = \{s\in T^-| \{s\}\not\models\varphi\} = \{s\in T^-| \{s\}\models\neg\varphi\}$. Then, by flatness, $T_1\models \varphi$ and $T_2\models \neg \varphi$. By the clause for disjunction, $T \models \varphi \lor \neg \varphi$.
\end{proof}

For trivial reasons (since it lacks negation of all but atomic formulas), $\CD$ itself satisfies WEM. It is not obvious how to extend $\CD$ itself with a negation. One way followed in the literature is to treat the negation of a dependence atom as satisfied only by the empty team. In this case, one immediately finds counterexamples to WEM.

Another possibility would be to extend $\CD$ with a contradictory negation of the dependence atom:
\begin{itemize}
\item  $T\models \not\dep{\SET{X}}{Y} \iff$ there are $s,s'\in T$ such that $s(\SET{X})=s'(\SET{ X})$ but $s(Y) \neq s'(Y)$.
\end{itemize}
 This option does not save WEM, either: just let $T$ be any team that satisfies the atom $\dep{\SET{X}}{Y}$. Then, in any partition $T_1, T_2$ of $T$, both parts satisfy the atom, and not its negation (also in case one of the two subteams is the empty subteam). Notice also the failure of the empty set property.

\subsection{Conditional excluded middle}

In the present section we consider the law that is commonly called \emph{conditional excluded middle}: 

\[
\hspace{-86pt}(CEM): \hspace{15pt}\models \psi \cf (\chi \lor \neg \chi).
\]
This is often considered in pair with the formally similar law called \emph{distribution}:
\[
(D): \hspace{15pt}\psi \cf (\chi \lor \chi') \models [(\psi \cf \chi) \lor (\psi \cf \chi')]
\] 

These principles are known to hold for Stalnaker counterfactuals and fail for Lewis counterfactuals. We will show in a later subsection (\ref{SUBSCOMPL}) that distribution is valid for our languages. Here we focus mainly on the CEM law.

What about systems of interventionist counterfactuals, such as those given by Galles and Pearl (\cite{GalPea1998}), Halpern (\cite{Hal2000}) and Briggs (\cite{Bri2012})? In the system considered by Galles and Pearl, only atomic consequents are allowed; therefore CEM and D have no instances. For what regards the remaining systems, these fundamentally evaluate formulas on single assignments, that we might think as singleton causal teams. Each intervention produces again an assignment/singleton team. Therefore, CEM trivially holds for atomic consequents. Simple induction arguments allow to extend this observation to all the formulas of the language considered by Halpern (which allow boolean operators in the consequents of counterfactuals). Briggs also allows disjunctions and negations in the antecedents of counterfactuals; we do not consider this possibility in the paper. The languages of these papers also have an operator corresponding to classical implication; therefore, D can be internalized and shown to hold in there, as is done explicitly in \cite{Hal2000}.

As far as causal teams are concerned, by the flatness of $\CO^{neg}$, CEM holds in $\CO^{neg}$, and therefore also in $\CD$. However, it is natural to wonder about a corresponding semantical property:
\[
(CEM'): \text{ For every causal team $T$, } T\models \theta\cf\chi \text{ or } T\models \theta\cf\neg\chi
\]

We show that CEM'  can fail in very simple examples on causal teams; therefore, our logic does \emph{not} coincide with that of usual interventionist counterfactuals. The simplest possible counterexample is of the form:
\begin{center}
$T$:
\begin{tabular}{|c|c|}
\hline
 \multicolumn{2}{|c|}{X \ Y} \\
\hline
 1 & 1 \\
\hline
 1 & 2 \\
\hline
\end{tabular}
\end{center}

The intervention $do(X=1)$ does not modify this team, that is: $T_{X=1} = T$. On the other hand, $T\not\models Y=1$, and $T\not\models Y\neq 1$. So, $T\not\models X=1 \cf Y=1$, and $T\not\models X=1 \cf Y\neq 1$. Thus, CEM' is falsified.



The peculiarity of this example might suggest that the failure of these laws could be attributed to the fact that there are no arrows between the variables of concern. This is not the case, as we can show with a slightly more complex example. Consider the team
\begin{center}
$S$: 
\begin{tabular}{|c|c|c|}
\hline
 \multicolumn{3}{|c|}{ }\\ 
 \multicolumn{3}{|c|}{X\tikzmark{X18} \  \tikzmark{Z18}Z\tikzmark{Z18'} \  \tikzmark{Y18}Y} \\
\hline
 1 & 1 & 2\\
\hline
2 & 2 & 4\\
\hline
\end{tabular}
 \begin{tikzpicture}[overlay, remember picture, yshift=.25\baselineskip, shorten >=.5pt, shorten <=.5pt]
	\draw [->] ([yshift=3pt]{pic cs:Z18'})  [line width=0.2mm] to ([yshift=3pt]{pic cs:Y18});
  \draw ([yshift=7pt]{pic cs:X18})  edge[line width=0.2mm, out=55,in=125,->] ([yshift=7pt]{pic cs:Y18});
  \end{tikzpicture}
\end{center}

where the behavior of $Y$ is determined by the function $\mathcal{F}_S(Y)(X,Z):= X+Z$. The intervention $do(X=1)$ generates the team 
\begin{center}
$S_{X=1}$: 
\begin{tabular}{|c|c|c|}
\hline
 \multicolumn{3}{|c|}{ }\\ 
 \multicolumn{3}{|c|}{X\tikzmark{X19} \  \tikzmark{Z19}Z\tikzmark{Z19'} \  \tikzmark{Y19}Y} \\
\hline
 1 & 1 & 2\\
\hline
1 & 2 & 3\\
\hline
\end{tabular}
 \begin{tikzpicture}[overlay, remember picture, yshift=.25\baselineskip, shorten >=.5pt, shorten <=.5pt]
	\draw [->] ([yshift=3pt]{pic cs:Z19'})  [line width=0.2mm] to ([yshift=3pt]{pic cs:Y19});
  \draw ([yshift=7pt]{pic cs:X19})  edge[line width=0.2mm, out=55,in=125,->] ([yshift=7pt]{pic cs:Y19});
  \end{tikzpicture}
\end{center}
It is then immediate to verify that $S$ neither satisfies  $X=1 \cf Y=2$ nor $X=1 \cf Y\neq 2$, thereby falsifying CEM'. 

\subsection{Interactions between implications and dependencies} \label{SUBSINTER}

As causal team semantics defines both dependencies based on correlations (functional dependence) and causal and counterfactual dependencies, it allows us to investigate the logical principles which govern their interaction. Here we list some of the inference rules that connect the various kinds of dependence that are referred to within the language $\CD$; the simple soundness proofs are left to the reader.

Given that the selective implication is strongly analogous to classical implication, the following inference rules hold (assuming the classicality of all antecedents):
\[
\frac{\theta \supset \chi}{(\theta \land \psi) \supset \chi} \hspace{25pt} \frac{\theta \supset \chi}{\theta \supset (\chi \lor\psi)} \hspace{25pt} \frac{\theta \supset (\chi \supset \psi)}{(\theta \land \chi) \supset \psi}
\]
the third of which is also invertible. The second rule can be expected to fail in logics that lack the empty set property.

It is time to make the relationship between selective and classical implication more explicit. First of all, notice that a restricted version of a semantic Deduction Theorem holds for selective implication: if any team that satisfies a \emph{classical} formula $\theta$ also satisfies $\chi$, then this in particular holds for any team of the form $T^\theta$; therefore $T^\theta\models \chi$, from which we obtain $T\models \theta\supset \chi$ (for any causal team $T$). Therefore, all the valid inference rules that are stated in this section can be internalized in the formal language as selective implications, \emph{provided the antecedents are classical formulas}\footnote{We might consider extending our languages by allowing \emph{flat} instead of classical antecedents for selective implications. This would extend the range of application of the Deduction Theorem, but would have the drawback that the syntax of the resulting language might not be decidable anymore.}.

Secondly, we wish to establish whether our approach, of allowing selective implications, is equivalent or not to the approach of adding to our languages the dual negations of all classical formulas; could then a selective implication $\theta\supset\chi$ be replaced by an expression of the form $\neg\theta\lor \chi$?
The answer is yes \emph{for downward closed logics}, as those that we have considered thus far; but the approach via selective implication is more general. The point is this: applying the clause for disjunction, $T\models\neg\theta\lor \chi$ holds if and only if there are $S_1,S_2 \subseteq T^-$ such that $S_2\models \chi$ and, for all $s\in S_1$, $\{s\}\not\models\theta$. Then $(T^{\theta})^- \subseteq S_2$; if the logic under consideration is downward closed, then, we can conclude that $T^{\theta}\models \chi$, that is, $T\models \theta\supset\chi$. But it is easy to find counterexamples in logics that are not downward closed. Consider for example the (marginal) \emph{independence atoms} $X \perp Y$ defined (along the lines of \cite{GraVaa2013}) by the semantical clause:
\begin{itemize}
\item $T\models X \perp Y \iff \text{for every } s,s'\in T^- \text{ there is an } s''\in T^- \text{ s.t. } s''(X) = s(X) \text{ and } s''(Y) = s'(Y)$.
\end{itemize}
Then the following team
\begin{center}
$U$:
\begin{tabular}{|c|c|}
\hline
 \multicolumn{2}{|c|}{X \ Y} \\
\hline
 $1$ & $1$ \\
\hline
 $1$ & $2$ \\
\hline
 $2$ & $1$ \\
\hline
 $2$ & $2$ \\
\hline
\end{tabular}
\end{center}
satisfies $X\neq 1 \lor X \perp Y$ (just consider the partition of $U$ into $\emptyset$ and $U$; $\emptyset\models X\neq 1$ and $U\models X \perp Y$). But $U\not \models X=1 \supset X \perp Y$.


We now turn to the relationship between selective and counterfactual implication.
We start by showing that their commutativity, that is, the equivalence between $\theta \cf (\chi \supset \psi)$ and $\chi \supset (\theta \cf \psi)$ fails in causal teams in both directions. We show this by a couple of counter-examples.
 For all the teams involved, the graph will be $\{\{X,Y,Z\}, \{(X,Y),(Z,Y)\}\}$ (so, we do not draw it every time) and the only structural equation will be $Y:= X+Z$; it is not really important to specify the ranges of variables. Consider the following teams:
\begin{center}
$T$:
\begin{tabular}{|c|c|c|}
\hline
 \multicolumn{3}{|c|}{X \ Z \ Y} \\
\hline
 1 & 1 & 2 \\
\hline
 1 & 2 & 3 \\
\hline
 2 & 3 & 5 \\
\hline
\end{tabular}
\hspace{20pt}
$T_{X=1}$:
\begin{tabular}{|c|c|c|}
\hline
 \multicolumn{3}{|c|}{X \ Z \ Y} \\
\hline
 1 & 1 & 2 \\
\hline
 1 & 2 & 3 \\
\hline
 \textbf{1} & 3 & \textbf{4} \\
\hline
\end{tabular}
\hspace{20pt}
$(T_{X=1})^{X=1}$:
\begin{tabular}{|c|c|c|}
\hline
 \multicolumn{3}{|c|}{X \ Z \ Y} \\
\hline
 1 & 1 & 2 \\
\hline
 1 & 2 & 3 \\
\hline
 1 & 3 & 4 \\
\hline
\end{tabular}

\vspace{10pt}

$T^{X=1}$:
\begin{tabular}{|c|c|c|}
\hline
 \multicolumn{3}{|c|}{X \ Z \ Y} \\
\hline
 1 & 1 & 2 \\
\hline
 1 & 2 & 3 \\
\hline
\end{tabular}
\hspace{20pt}
$(T^{X=1})_{X=1}$:
\begin{tabular}{|c|c|c|}
\hline
 \multicolumn{3}{|c|}{X \ Z \ Y} \\
\hline
 1 & 1 & 2 \\
\hline
 1 & 2 & 3 \\
\hline
\end{tabular}
\end{center}
From the tables, it is immediate to see that $T\models X=1 \supset (X=1\cf (Y=2\lor Y=3))$, but $T\not\models X=1 \cf (X=1\supset (Y=2\lor Y=3))$.


A counterexample for the opposite direction is given by the following teams, with graph and equation as before:

\begin{center}
$S$:
\begin{tabular}{|c|c|c|}
\hline
 \multicolumn{3}{|c|}{X \ Z \ Y} \\
\hline
 1 & 1 & 2 \\
\hline
 1 & 2 & 3 \\
\hline
 2 & 1 & 3 \\
\hline
\end{tabular}
\hspace{20pt}
$S_{Z=1}$:
\begin{tabular}{|c|c|c|}
\hline
 \multicolumn{3}{|c|}{X \ Z \ Y} \\
\hline
 1 & 1 & 2 \\
\hline
 2 & 1 & 3 \\
\hline
\end{tabular}
\hspace{20pt}
$(S_{Z=1})^{Y=3}$:
\begin{tabular}{|c|c|c|}
\hline
 \multicolumn{3}{|c|}{X \ Z \ Y} \\
\hline
  2 & 1 & 3 \\
\hline
\end{tabular}

\vspace{10pt}

$S^{Y=3}$:
\begin{tabular}{|c|c|c|}
\hline
 \multicolumn{3}{|c|}{X \ Z \ Y} \\
\hline
 1 & 2 & 3 \\
\hline
 2 & 1 & 3 \\
\hline
\end{tabular}
\hspace{20pt}$(S^{Y=3})_{Z=1}$:
\begin{tabular}{|c|c|c|}
\hline
 \multicolumn{3}{|c|}{X \ Z \ Y} \\
\hline
 1 & 1 & 2 \\
\hline
 2 & 1 & 3 \\
\hline
\end{tabular}
\end{center}
which show that $S\models Z=1 \cf (Y=3 \supset Y=3)$ but $S\not\models Y=3\supset (Z=1 \cf Y=3)$.

The noncommutativity of $\cf$ with $\supset$ is a phenomenon that, to the best of our knowledge, has no analogous in the usual notations of the causal inference literature; we return to this point in Section \ref{SECDEFPROB}.

 In our search for further rules linking $\cf$ and $\supset$, we can now have a look at the literature on Stalnaker-Lewis counterfactuals, and check whether in some case $\supset$ may take the role that in Stalnaker-Lewis rules is taken by material implication. For example, it is known that 1) a Stalnaker-Lewis counterfactual $\theta\cf\chi$ implies the corresponding material conditional $\theta\rightarrow\chi$, and 2) the converse holds, \emph{provided $\theta$ holds}. It might be interesting to inquire whether similar principles hold for our counterfactual and selective implication, respectively.  The literature on causation seems to suggest a negative answer: looking for example at rule 2 of Pearl's \emph{do calculus} (\cite{Pea2000}, sec. 3.4) we see that, in some languages that also allow the discussion of probabilities, replacing an intervention with an observation, or viceversa, can be done only under some graph-theoretical (and not logical) assumptions. Explicit counterexamples to 1) and 2) can be constructed in our languages, following the lines of \cite{Bri2012} (sect. 3.1 and 4), for some formulas in which the consequent $\chi$ is itself a counterfactual. The counterexample in \cite{Bri2012} is based on the famous ``execution line'' example (\cite{Pea2000}), but we can construct a much simpler one. Consider the following causal teams, with boolean ranges and  invariant functions $f_Y(X):= X$ and $f_Z(X,Y):= X\land Y$:

\begin{center}
$T = T^{Y=0}$: 
\begin{tabular}{|c|c|c|}
\hline
 \multicolumn{3}{|c|}{ }\\ 
 \multicolumn{3}{|c|}{X\tikzmark{XA} \  \tikzmark{YA}Y\tikzmark{YA'} \  \tikzmark{ZA}Z} \\
\hline
 0 & 0 & 0\\
\hline
\end{tabular}
\hspace{10pt}$(T^{Y=0})_{X=1}$:
\begin{tabular}{|c|c|c|}
\hline
 \multicolumn{3}{|c|}{ }\\ 
 \multicolumn{3}{|c|}{X\tikzmark{XB} \  \tikzmark{YB}Y\tikzmark{YB'} \  \tikzmark{ZB}Z} \\
\hline
 1 & 1 & 1\\
\hline
\end{tabular}

\vspace{10pt} $T_{Y=0}$:
\begin{tabular}{|c|c|c|}
\hline
 \multicolumn{3}{|c|}{ }\\ 
 \multicolumn{3}{|c|}{X\tikzmark{XC} \  \tikzmark{YC}Y\tikzmark{YC'} \  \tikzmark{ZC}Z} \\
\hline
 0 & 0 & 0\\
\hline
\end{tabular}
\hspace{10pt}$(T_{Y=0})_{X=1}$
\begin{tabular}{|c|c|c|}
\hline
 \multicolumn{3}{|c|}{ }\\ 
 \multicolumn{3}{|c|}{X\tikzmark{XD} \  \tikzmark{YD}Y\tikzmark{YD'} \  \tikzmark{ZD}Z} \\
\hline
 1 & 0 & 0\\
\hline
\end{tabular}

 \begin{tikzpicture}[overlay, remember picture, yshift=.25\baselineskip, shorten >=.5pt, shorten <=.5pt]
  \draw [->] ([yshift=3pt]{pic cs:XA})  [line width=0.2mm] to ([yshift=3pt]{pic cs:YA});
	\draw [->] ([yshift=3pt]{pic cs:YA'})  [line width=0.2mm] to ([yshift=3pt]{pic cs:ZA});
  \draw ([yshift=7pt]{pic cs:XA})  edge[line width=0.2mm, out=55,in=125,->] ([yshift=7pt]{pic cs:ZA});
    
	 \draw [->] ([yshift=3pt]{pic cs:XB})  [line width=0.2mm] to ([yshift=3pt]{pic cs:YB});
	\draw [->] ([yshift=3pt]{pic cs:YB'})  [line width=0.2mm] to ([yshift=3pt]{pic cs:ZB});
  \draw ([yshift=7pt]{pic cs:XB})  edge[line width=0.2mm, out=55,in=125,->] ([yshift=7pt]{pic cs:ZB});

	\draw [->] ([yshift=3pt]{pic cs:YC'})  [line width=0.2mm] to ([yshift=3pt]{pic cs:ZC});
	\draw ([yshift=7pt]{pic cs:XC})  edge[line width=0.2mm, out=55,in=125,->] ([yshift=7pt]{pic cs:ZC});

	\draw [->] ([yshift=3pt]{pic cs:YD'})  [line width=0.2mm] to ([yshift=3pt]{pic cs:ZD});
	\draw ([yshift=7pt]{pic cs:XD})  edge[line width=0.2mm, out=55,in=125,->] ([yshift=7pt]{pic cs:ZD});
  \end{tikzpicture}
\end{center}
It is then immediate to see that  $T\models Y=0\cf (X=1 \cf Z\neq 1)$ but $T\not\models Y=0\supset (X=1 \cf Z\neq 1)$. The important element in this example (and similar ones) is that we have an intervention $do(Y=0)$ which does not modify the team component of the causal team, but modifies the graph -- which encodes counterfactual relations.

Other simple examples could be constructed for consequents which contain probabilistic statements -- we will define a semantics for them in later sections. We could not find, instead, a counterexample with consequents that involve only selective implications, dependence atoms and connectives; but here is a counterexample using selective implications and \emph{contradictory negations} of dependence atoms, as defined in subsection \ref{SUBSEM}. Consider the following causal teams, with invariant function $f_Z(X,Y):= Y$:

\begin{center}
$S$: 
\begin{tabular}{|c|c|c|}
\hline
 \multicolumn{3}{|c|}{ }\\ 
 \multicolumn{3}{|c|}{X\tikzmark{XE} \  \tikzmark{YE}Y\tikzmark{YE'} \  \tikzmark{ZE}Z} \\
\hline
 0 & 0 & 0\\
\hline
1 & 1 & 1\\
\hline
\end{tabular}

\vspace{10pt}$(S_{X=0})=(S_{X=0})^{X=0}$:
\begin{tabular}{|c|c|c|}
\hline
 \multicolumn{3}{|c|}{ }\\ 
 \multicolumn{3}{|c|}{X\tikzmark{XF} \  \tikzmark{YF}Y\tikzmark{YF'} \  \tikzmark{ZF}Z} \\
\hline
 0 & 0 & 0\\
\hline
 0 & 1 & 1\\
\hline
\end{tabular}
\hspace{10pt} $S^{X=0} = (S^{X=0})^{X=0}$:
\begin{tabular}{|c|c|c|}
\hline
 \multicolumn{3}{|c|}{ }\\ 
 \multicolumn{3}{|c|}{X\tikzmark{XG} \  \tikzmark{YG}Y\tikzmark{YG'} \  \tikzmark{ZG}Z} \\
\hline
 0 & 0 & 0\\
\hline
\end{tabular}

 \begin{tikzpicture}[overlay, remember picture, yshift=.25\baselineskip, shorten >=.5pt, shorten <=.5pt]
	\draw [->] ([yshift=3pt]{pic cs:YE'})  [line width=0.2mm] to ([yshift=3pt]{pic cs:ZE});
  \draw ([yshift=7pt]{pic cs:XE})  edge[line width=0.2mm, out=55,in=125,->] ([yshift=7pt]{pic cs:ZE});
    
	\draw [->] ([yshift=3pt]{pic cs:YF'})  [line width=0.2mm] to ([yshift=3pt]{pic cs:ZF});
	
	%
	\draw [->] ([yshift=3pt]{pic cs:YG'})  [line width=0.2mm] to ([yshift=3pt]{pic cs:ZG});
	\draw ([yshift=7pt]{pic cs:XG})  edge[line width=0.2mm, out=55,in=125,->] ([yshift=7pt]{pic cs:ZG});
	
	\end{tikzpicture}
\end{center}

Then $S\models X=0 \cf (X=0 \supset \not\dep{X}{Z})$ but $S\not\models X=0 \supset (X=0 \supset \not\dep{X}{Z})$. Of course, it is questionable whether we should allow this kind of arrow $X\rightarrow Z$ in our definition of causal teams.

Let us now see how dependence atoms interact with the two kinds of implication. For what regards selective implication, the following rules are valid:
\[
\frac{\dep{\SET{X}}{Y}}{\SET{X} = \SET{x} \supset \con{Y}} \hspace{35pt} \frac{\bigwedge_{\SET{x}\in Ran(\SET X)}\bigvee_{y\in Ran(Y)}(\SET{X}=\SET{x}\supset Y=y)}{\dep{\SET{X}}{Y}}.
\]
As a simple special case of the second rule, we have that $Y=y$ implies $\con{Y}$. An analogous rule holds for counterfactual implication:
\[
 \frac{\bigwedge_{\SET{x}\in Ran(\SET X)}\bigvee_{y\in Ran(Y)}(\SET{X}=\SET{x}\cf Y=y)}{\dep{\SET{X}}{Y}}.
\]
but the implication from $\dep{\SET{X}}{Y}$ to $\SET{X} = \SET{x} \cf \con{Y}$ only holds on teams where the tuple $\SET{x}$ occurs.

\subsection{Permutation and exportation/importation of antecedents}

Consider the following laws of \emph{permutation, exportation} and \emph{importation} of antecedents:
\[
(P): \psi \cf (\psi'\cf \chi)  \equiv \psi' \cf (\psi\cf \chi)
\]
\[
(E): (\psi \land \psi')\cf \chi \models \psi \cf (\psi'\cf \chi)
\]
\[
(I): \psi \cf (\psi'\cf \chi) \models (\psi \land \psi')\cf \chi
\]
These laws famously fail for Stalnaker's (\cite{Sta1968}) and Lewis' (\cite{Lew1973}) counterfactuals; see \cite{Sid2010}, chap. 8; they are also claimed to fail, in general, for natural language counterfactuals. 
The purpose of this section is to show that under some restriction (that the variables mentioned in $\psi$ and, resp. $\psi'$ be distinct) these laws are valid for our counterfactuals. Under this restriction (which will be somewhat  relaxed in the next section), exportation/importation amounts to the statement that an intervention over a set of variables can be split into successive interventions over disjoint subsets of variables; permutation corresponds to the assertion the two interventions of this kind can be performed in any order.


\begin{rema}
We assume in this subsection that causal teams are explicit. However, the results presented here hold for non-explicit causal teams as well, although the proofs in that case are more involved.
\end{rema}





\begin{lem} \label{LEMPA}
Let $G$ be a finite acyclic graph, $\mathbf{X},Y,W$ vertices of $G$ with $W\in PA_Y$, $Y\notin \SET X$ and $Y$ reachable from $\SET X$ (that is, $d(\mathbf{X},Y)\geq 0$). Then $d(\mathbf{X},W)<d(\mathbf{X},Y)$. 
\end{lem}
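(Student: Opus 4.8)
The plan is to argue straight from the definition of the evaluation distance $d_G$, splitting into cases according to whether $W$ is reachable from $\SET{X}$ in the reduced graph $G_{-\SET{X}}$.

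First I would clear away the degenerate case $d(\SET{X},W)=-1$: since the hypothesis supplies $d(\SET{X},Y)\geq 0$, we get $d(\SET{X},W)=-1<0\leq d(\SET{X},Y)$ immediately, and there is nothing more to do.

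The substantive case is $d(\SET{X},W)=k$ for some $k\in\mathbb{N}$, which, since $G$ is finite and acyclic, is the only remaining possibility. The heart of the argument is a path-extension step. Let $P$ be any directed path of $G_{-\SET{X}}$ from some vertex of $\SET{X}$ to $W$ (when $k=0$ this $P$ is the trivial one-vertex path and $W\in\SET{X}$ itself). Since $W\in PA_Y$ the pair $(W,Y)$ is an edge of $G$, and since $Y\notin\SET{X}$ the definition of $G_{-\SET{X}}$ keeps this edge, i.e. $(W,Y)\in E_{-\SET{X}}$. Before appending $(W,Y)$ to $P$ I would check that $Y$ does not already occur on $P$: otherwise $P$ would contain a directed sub-path from $Y$ to $W$ which, together with the edge $(W,Y)$, would form a cycle in $G$, contradicting acyclicity. (One can also simply note that $G_{-\SET{X}}$, being a subgraph of the acyclic graph $G$, is itself acyclic, so no directed walk in it can repeat a vertex.) Hence the concatenation of $P$ with $(W,Y)$ is a directed path of $G_{-\SET{X}}$, from a vertex of $\SET{X}$ to $Y$, whose length is one more than that of $P$. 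Taking the supremum over all such $P$ — which is legitimate because path lengths in a finite graph are bounded, so $d(\SET{X},W)<\infty$ — yields $d(\SET{X},Y)\geq d(\SET{X},W)+1>d(\SET{X},W)$, the desired inequality.

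There is no genuine obstacle in this proof; the one step that deserves care is the verification that a path witnessing $d(\SET{X},W)$ avoids $Y$, and this is precisely where acyclicity of $G$ is used. Everything else amounts to unwinding the definitions of $G_{-\SET{X}}$ and of $d_G$ together with the elementary fact that suprema of bounded sets of natural numbers behave as expected.
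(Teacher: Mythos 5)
Your proof is correct and follows essentially the same route as the paper's: extend a longest path witnessing $d(\SET X,W)$ by the edge $(W,Y)$, after disposing of the $d(\SET X,W)=-1$ case separately. If anything, your version is slightly more careful than the paper's, since you explicitly check that $(W,Y)$ survives in $G_{-\SET X}$ (using $Y\notin\SET X$) and that $Y$ does not already lie on the path being extended, both of which the paper leaves implicit.
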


\begin{proof}
$d(\mathbf{X},Y) = sup\{lenght(P) | P \text{ directed path of } G_{-\SET X} \text{ from some } X\in \mathbf{X}$ $\text{to } Y\}$. But any directed path from $\mathbf{X}$ to $W$ is included in some path from this set. So $d(\mathbf{X},W)\leq d(\mathbf{X},Y)$.

Assume first that at least one path from $\mathbf{X}$ to $W$ exists in $G_{-\SET X}$. Since $G$ is finite and acyclic, $d(\mathbf{X},W)=n\in \mathbb{N}$. Let $P$ be a path of $G_{-\SET X}$ from some $X\in \mathbf{X}$ to $W$ of lenght $n$. Then $P\cup\{(W,Y)\}$ is a path of $G_{-\SET X}$ from $X$ to $Y$ of lenght $n+1$. 
This holds for any $X\in \SET X$. Since $\SET X$ is finite, $d(\SET X,Y) = \operatorname{max}_{X\in \SET X}d(X,Y) > \operatorname{max}_{X\in \SET X}d(X,W) = d(\SET X,W)$. 
 
If instead no path from $\mathbf{X}$ to $W$ exists, then $d(\mathbf{X},W)=-1$, and the claim of the Lemma is satisfied.
\end{proof}

In the next theorem exportation and importation of antecedents are expressed as rules. A more general version will be proved later on (Theorem \ref{FULLIMPEXP}).

\begin{teo}\label{IMPEXP}
Let $T$ be a causal team with $G(T)$ finite acyclic, $\mathbf{X},\mathbf{Y}\in dom(T)$ such that $\mathbf{X} \cap \mathbf{Y}= \emptyset$, $\mathbf{x}\in Ran(\mathbf{X})$, and $\mathbf{y}\in Ran(\mathbf{Y})$. Then $T_{\mathbf{X}=\mathbf{x}\land \mathbf{Y}=\mathbf{y}} = (T_{\mathbf{X}=\mathbf{x}})_{\mathbf{Y}=\mathbf{y}}$. Therefore, the following rules
\[
(IMP): \frac{\SET X=\SET x \cf (\SET Y=\SET y \cf \chi)}{(\SET X=\SET x \land \SET Y=\SET y) \cf \chi}  \hspace{25pt} (EXP): \frac{(\SET X=\SET x \land \SET Y=\SET y) \cf \chi}{\SET X=\SET x \cf (\SET Y=\SET y \cf \chi)}
\]
are valid, under the above restrictions, over recursive teams.
\end{teo}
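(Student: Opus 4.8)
The plan is to prove the semantic identity $T_{\SET X=\SET x \land \SET Y=\SET y} = (T_{\SET X=\SET x})_{\SET Y=\SET y}$ by comparing the two $do$-algorithms step by step; once this identity is established, the rules (IMP) and (EXP) follow immediately by chaining the semantic clause for counterfactuals twice. By the remark preceding the theorem, I may assume $T$ is explicit (indeed fully explicit, after the harmless preliminary step of adjoining formal terms), so that all intervention operations are total and lossless. I will also reduce to the case $\SET X, \SET Y$ disjoint from the endogenous-variable bookkeeping as in the definition of $do$: the $\mathcal R$-component is untouched by either side, and the $\mathcal F$-component on the left is $\mathcal F_T$ restricted to $\SET V \setminus (\SET X \cup \SET Y)$, which is exactly what one gets by first restricting to $\SET V\setminus \SET X$ and then to $(\SET V\setminus\SET X)\setminus \SET Y$; similarly the graph $G(T)_{\SET X \cup \SET Y}$ (all arrows into $\SET X \cup \SET Y$ removed) equals $(G(T)_{\SET X})_{\SET Y}$. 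So the real content is that the two procedures produce the same team component.

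The key technical step is a lemma comparing evaluation distances. Working in the graph $G' := G(T)_{-\SET X}$ (arrows into $\SET X$ deleted), I want: for every vertex $Z \notin \SET X \cup \SET Y$,
\[
d_{G(T)}(\SET X \cup \SET Y, Z) \;=\; \max\bigl(d_{G(T)}(\SET X, Z),\; d_{G'}(\SET Y, Z)\bigr),
\]
and moreover that the variables reachable from $\SET X\cup\SET Y$ in $G(T)_{-(\SET X\cup\SET Y)}$ are exactly those reachable from $\SET X$ in $G(T)_{-\SET X}$ together with those reachable from $\SET Y$ in $G'$. This is a pure graph-theoretic fact about directed paths: any path witnessing the left side either avoids $\SET Y$ entirely after leaving $\SET X \cup \SET Y$ — and can then be split into an initial $\SET X$-segment plus a $\SET Y$-segment at the last visit to $\SET Y$ — or is a path from $\SET X$ not meeting $\SET Y$, or a path from $\SET Y$ in $G'$. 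Lemma \ref{LEMPA} (that parents have strictly smaller distance) is the tool that makes the induction go through: it guarantees that when the combined algorithm updates a variable $Z$ at its distance $d_{G(T)}(\SET X\cup\SET Y,Z)$, all parents of $Z$ have already been updated, and to the *same* values that the two-stage procedure assigned them — so applying the (same, restricted) structural function $f_Z$ yields the same result on both sides. The argument is an induction on $d_{G(T)}(\SET X\cup\SET Y, Z)$, using the distance identity above to match it with the stage at which $(T_{\SET X=\SET x})_{\SET Y=\SET y}$ updates $Z$: if $d_{G(T)}(\SET X,Z) \geq d_{G'}(\SET Y,Z)$ then $Z$ is fixed to its $T_{\SET X=\SET x}$-value by the first intervention and left alone by the second (since no directed $\SET Y$-path in $G'$ is longer), and that value agrees with the combined one by the inductive hypothesis on parents; otherwise $Z$ gets re-updated in the second stage, and again agrees because its parents agree.

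The main obstacle I anticipate is bookkeeping the case where a variable lies on paths from *both* $\SET X$ and $\SET Y$ (so it is updated twice by the staged procedure): one must check that the second update in $(T_{\SET X=\SET x})_{\SET Y=\SET y}$ overwrites with precisely the value the single combined update gives, which requires knowing that the intermediate team $T_{\SET X=\SET x}$ has already computed the correct parent-values — and here the fact that $T$ is fully explicit is essential, since otherwise those parent-values might involve formal terms that differ in shape between the two computations. A secondary subtlety is the degenerate overlap of variable occurrences within a single antecedent (the footnote's $do(X=x\wedge X=x\wedge Y=y)$ convention); since $\SET X \cap \SET Y = \emptyset$ is assumed, this does not interact badly, but I should note that the hypothesis $\SET X\cap\SET Y=\emptyset$ is exactly what rules out the contradictory-conjunction pathology. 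Once the team identity is in hand, (IMP) reads: if $T \models \SET X=\SET x \cf (\SET Y=\SET y \cf \chi)$ then $T_{\SET X=\SET x} \models \SET Y=\SET y\cf\chi$, i.e. $(T_{\SET X=\SET x})_{\SET Y=\SET y}\models\chi$, i.e. $T_{\SET X=\SET x\land\SET Y=\SET y}\models\chi$, i.e. $T\models(\SET X=\SET x\land\SET Y=\SET y)\cf\chi$; and (EXP) is the reverse reading of the same chain.
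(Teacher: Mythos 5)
Your overall strategy --- compare the two $do$-algorithms assignment by assignment, induct on evaluation distances, and use Lemma \ref{LEMPA} to guarantee that when $Z$ is updated its parents are already finalized and agree, so that applying $f_Z$ gives the same value on both sides --- is exactly the paper's route, and your reduction of (IMP)/(EXP) to the team identity at the end is correct. However, the claim you single out as ``the key technical step'' is false. Take $G$ to be the single path $X\to Y\to Z$ with $\SET X=\{X\}$ and $\SET Y=\{Y\}$: then $d_{G}(\SET X,Z)=2$ (the path $X\to Y\to Z$ survives in $G_{-\SET X}$), while $d_{G}(\SET X\cup\SET Y,Z)=1$ (the arrow $X\to Y$ is deleted in $G_{-(\SET X\cup\SET Y)}$), so $d_G(\SET X\cup\SET Y,Z)\neq\max\bigl(d_G(\SET X,Z),\,d_{G_{-\SET X}}(\SET Y,Z)\bigr)=2$. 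The qualitative ``moreover'' claim about reachability sets is fine, but the quantitative identity is not, so an induction that matches $d(\SET X\cup\SET Y,Z)$ to the stages of the staged procedure via this identity does not go through as written.

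The case analysis inside your induction is also wrong as stated: whether $Z$ is re-updated by the second intervention $do(\SET Y=\SET y)$ depends only on whether $d_{G_{-\SET X}}(\SET Y,Z)\geq 1$, not on how that distance compares with $d(\SET X,Z)$. For instance, if $Z$ has parents $B$ and $Y$ with $Y\in\SET Y$, and $\SET X$ reaches $Z$ only through a long path ending at $B$, then $d(\SET X,Z)>d_{G_{-\SET X}}(\SET Y,Z)=1$, yet $Z$ is certainly rewritten by $do(\SET Y=\SET y)$ (to $f_Z(b,y)$, not left at its $T_{\SET X=\SET x}$-value $f_Z(b,y_{\mathrm{old}})$). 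The repair is to discard both the distance identity and the case split, which is what the paper does: by acyclicity each procedure leaves the $PA_Z$-columns untouched after the $Z$-column is last written, so the final value of $Z$ in either procedure is $\mathcal F_T(Z)$ applied to the final parent values; Lemma \ref{LEMPA} makes the parents strictly closer to $\SET Y$ (and no farther from $\SET X$), so a simultaneous induction on the two separate distances $d(\SET X,\cdot)$ and $d(\SET Y,\cdot)$ --- never on $d(\SET X\cup\SET Y,\cdot)$ --- closes the argument. Your core observation (values agree because parents agree and the same restricted $f_Z$ is applied) is the right one; only the scaffolding around it needs to be replaced.
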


\begin{proof}
Notice first
that for any $s'\in (T_{\SET{X}=\SET{x}})_{\SET{Y}=\SET{y}}$ and any $s''\in T_{\SET{X}=\SET{x}\land \SET{Y}=\SET{y}}$, $s'(\SET{X})=s''(\SET{X}) = \SET{x}$ and $s'(\SET{Y})=s''(\SET{Y}) = \SET{y}$.

Secondly, the restriction to explicit causal teams implies that  $\mathcal{F}_{T} \supseteq\mathcal{F}_{T_{{\SET{X}=\SET{x}}}} \supseteq \mathcal{F}_{(T_{{\SET{X}=\SET{x}}})_{\SET{Y}=\SET{y}}} = \mathcal{F}_{T_{{\SET{X}=\SET{x}}\land\SET{Y}=\SET{y}}}$.

Fix $s\in T$. Let $s'\in (T_{\SET{X}=\SET{x}})_{\SET{Y}=\SET{y}}$ be the assignment obtained by applying to $\{s\}$ the intervention $do(\SET{X}=\SET{x})$ followed by $do(\SET{Y}=\SET{y})$, and $s''$ the assignment obtained by applying $do(\SET{X}=\SET{x}\land\SET{Y}=\SET{y})$ to $s$. We shall prove, by a simultaneous induction on $m$ and $n$, that $s'(Z)=s''(Z)$ for any variable $Z$ such that $d(\SET{X},Z)\leq m$ and $d(\SET{Y},Z)\leq n$.  In the base case $Z\in  \SET{X}  \cup  \SET{Y}   $, this result is obvious.

Suppose that the statement holds for $m$ and $n$, and let $Z\in dom(T) \setminus( \SET{X}  \cup  \SET{Y} )$ be a variable such that $d(\SET{X},Z) \leq m$ and $d(\SET{Y},Z) = n+1$ (the proof of the symmetric case is analogous). 

From the algorithm $do(\SET{Y}=\SET{y})$, it follows that $s'(Z) = \mathcal F_{T_{\SET{X}=\SET{x}}}(Z)(s'(PA_Z))$ (notice indeed that, thanks to acyclicity, the algorithm does not modify the $PA_Z$-columns after modifying the $Z$-column), which, by 
the observation above, is equal to $\mathcal F_{T}(Z)(s'(PA_Z))$; and from the algorithm $do(\SET{X}=\SET{x}\land \SET{Y}=\SET{y})$, it follows that $s''(Z) = \mathcal F_{T}(Z)(s''(PA_Z))$. 
By Lemma \ref{LEMPA} (which we can apply because $d(\SET Y, Z) = n+1$ implies $Z\notin \SET Y$), the variables in $PA_Z$ have strictly smaller distances from 
 $\SET{Y}$ than $Z$ (and, obviously, at most the same distance from $\SET X$). Then, by the inductive hypothesis, $s'(PA_Z) = s''(PA_Z)$. So, applying $\mathcal F_{T}(Z)$, we obtain $s'(Z)=s''(Z)$.
\end{proof}


As a corollary, we obtain the following variant of the permutation rule: 

\begin{teo}\label{PERM}
Let $T$ be a causal team with $G(T)$ finite acyclic, $\mathbf{X},\mathbf{Y}\in dom(T)$ such that $\mathbf{X} \cap \mathbf{Y}= \emptyset$, $\mathbf{x}\in ran(\mathbf{X})$, and $\mathbf{y}\in ran(\mathbf{Y})$. Then $(T_{\mathbf{X}=\mathbf{x}})_{\mathbf{Y}=\mathbf{y}} = (T_{\mathbf{Y}=\mathbf{y}})_{\mathbf{X}=\mathbf{x}}$. Therefore, the following rule 
\[
(PERM): \frac{\SET X=\SET x \cf (\SET Y=\SET y \cf \chi)}{\SET Y=\SET y \cf (\SET X=\SET x \cf \chi)}
\]
is valid, under the above restrictions, over recursive causal teams.
\end{teo}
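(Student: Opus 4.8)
The plan is to obtain this as a direct corollary of Theorem \ref{IMPEXP}, using one extra observation: the intervention algorithm $do(\bigwedge_i X_i = x_i)$ depends only on the \emph{set} of variable–value pairs occurring among its conjuncts, not on the order in which they are written. Indeed, Stage $0$ deletes all arrows entering the whole set of intervened variables and performs every substitution simultaneously, and each later stage is governed solely by the distances $d_{G(T)}(\SET X\cup\SET Y,\cdot)$ from that set. Hence $do(\SET X=\SET x\land \SET Y=\SET y)$ and $do(\SET Y=\SET y\land \SET X=\SET x)$ are literally the same algorithm, so $T_{\SET X=\SET x\land \SET Y=\SET y} = T_{\SET Y=\SET y\land \SET X=\SET x}$; and both are defined, since $\SET X\cap\SET Y=\emptyset$ guarantees the conjunction assigns at most one value to each variable (so the antecedent is not contradictory).

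Now I would invoke Theorem \ref{IMPEXP} twice. First, with the hypotheses exactly as stated, it gives $T_{\SET X=\SET x\land \SET Y=\SET y} = (T_{\SET X=\SET x})_{\SET Y=\SET y}$. Second, interchanging the roles of $(\SET X,\SET x)$ and $(\SET Y,\SET y)$ — the disjointness hypothesis $\SET Y\cap\SET X=\emptyset$ is the same — it gives $T_{\SET Y=\SET y\land \SET X=\SET x} = (T_{\SET Y=\SET y})_{\SET X=\SET x}$. Chaining these with the order-independence from the previous paragraph yields
\[
(T_{\SET X=\SET x})_{\SET Y=\SET y} = T_{\SET X=\SET x\land \SET Y=\SET y} = T_{\SET Y=\SET y\land \SET X=\SET x} = (T_{\SET Y=\SET y})_{\SET X=\SET x},
\]
which is the asserted equality of causal teams.

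The rule (PERM) then follows by unwinding the semantic clause for $\cf$. Suppose $T\models \SET X=\SET x\cf(\SET Y=\SET y\cf\chi)$; then $T_{\SET X=\SET x}\models \SET Y=\SET y\cf\chi$, hence $(T_{\SET X=\SET x})_{\SET Y=\SET y}\models\chi$. By the team equality just established, $(T_{\SET Y=\SET y})_{\SET X=\SET x}\models\chi$, so $T_{\SET Y=\SET y}\models \SET X=\SET x\cf\chi$, and therefore $T\models \SET Y=\SET y\cf(\SET X=\SET x\cf\chi)$; since $T$ was arbitrary (subject to the stated restrictions, over recursive causal teams) the rule is sound. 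I do not expect any genuine obstacle here, as all the real work resides in Theorem \ref{IMPEXP} and Lemma \ref{LEMPA}; the only points needing a moment's care are the order-independence of a conjunctive intervention (immediate from the definition of the $do$ algorithm) and checking that $\SET X\cap\SET Y=\emptyset$ is precisely what legitimizes both applications of Theorem \ref{IMPEXP} and the commutativity step.
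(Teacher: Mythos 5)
Your proof is correct and follows exactly the paper's own argument: two applications of Theorem \ref{IMPEXP}, the observation that the $do$ algorithm is insensitive to the order of conjuncts, and transitivity of equality. The unwinding of the semantic clause for $\cf$ at the end is left implicit in the paper but is the evident intended step.
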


\begin{proof}

From Theorem \ref{IMPEXP} we obtain the equalities $(T_{\mathbf{X}=\mathbf{x}})_{\mathbf{Y}=\mathbf{y}} = T_{\mathbf{X}=\mathbf{x}\land \mathbf{Y}=\mathbf{y}}$ and $(T_{\mathbf{Y}=\mathbf{y}})_{\mathbf{X}=\mathbf{x}} = T_{\mathbf{Y}=\mathbf{y}\land\mathbf{X}=\mathbf{x} }$. Since the order of variables is irrelevant in the definition of the $do$ algorithm, we also have $T_{\mathbf{X}=\mathbf{x}\land \mathbf{Y}=\mathbf{y}} = T_{\mathbf{Y}=\mathbf{y}\land \mathbf{X}=\mathbf{x}}$. Transitivity yields the desired result.
\end{proof}
\subsection{Generalized import-export and permutation rules} 

The purpose of this section is to show that the assumptions of the permutation and the import-export rules can be relaxed in the following way: if the interventions involved are over two sets of variables $\mathbf{X}$ and $\mathbf{Y}$ with nonempty intersection, we have only to require that the two interventions act ``in the same way'' over the common set of variables $\mathbf{X}\cap\mathbf{Y}$.

\begin{lemma}\label{REP}
For any $\SET{X}\subseteq dom(T)$, $(T_{\SET{X}=\SET{x}})_{\SET{X}=\SET{x}} = T_{\SET{X}=\SET{x}}$.
\end{lemma}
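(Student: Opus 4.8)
The plan is to verify that the two causal teams $(T_{\SET X=\SET x})_{\SET X=\SET x}$ and $T_{\SET X=\SET x}$ agree component by component, with essentially all the work concentrated in the team component. Throughout I work under the standing assumption of this part of the paper that causal teams are explicit, and I may in fact assume $T$ is \emph{fully explicit} and recursive (replacing $T$ by $T^{FE}$ changes neither $T_{\SET X=\SET x}$, by the very definition of the $do$ algorithm, nor the statement to be proved). Write $T' := T_{\SET X=\SET x}$.

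First the easy components. The $do$ algorithm sets $G(T') = G(T)_{-\SET X}$, and since $(G(T)_{-\SET X})_{-\SET X} = G(T)_{-\SET X}$, a second application leaves the graph fixed; it never modifies $\mathcal R$; and it replaces the functional component by its restriction to the endogenous variables outside $\SET X$, so a second such restriction does nothing. Moreover $d_{G(T')}(\SET X,\cdot) = d_{G(T)}(\SET X,\cdot)$ (both are computed in $G(T)_{-\SET X}$), so the two runs of the algorithm pass through the very same sequence of stages.

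The team component is the crux. I would run the second $do(\SET X=\SET x)$ on $T'$ and prove, by induction on the stage index $j$, that after stage $j$ the team is still $(T')^-$. Stage $0$ replaces each $s$ by $s(\SET x/\SET X)$, which is the identity since $s(\SET X)=\SET x$ for every $s\in(T')^-$. At stage $j+1$ (so $j+1\ge 1$, which forces each updated variable $Z$ to satisfy $Z\notin\SET X$ and $d(\SET X,Z)\ge 0$) the algorithm replaces $s(Z)$ by $\mathcal F_{T'}(Z)(s(PA_Z))$, and $\mathcal F_{T'}(Z)$ is just $f_Z:=\mathcal F_T(Z)$ because $Z$ is endogenous in $T'$. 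So it suffices to establish the key claim: \emph{for every $s\in(T')^-$ and every endogenous $Z$ with $d(\SET X,Z)\ge 0$, the identity $s(Z)=f_Z(s(PA_Z))$ already holds in $T'$}. Granting this, every update performed by the second run is the identity, the induction closes, the team component is unchanged, and combined with the first step this yields $(T_{\SET X=\SET x})_{\SET X=\SET x}=T_{\SET X=\SET x}$.

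The key claim is where the one genuine point of care lies, and Lemma \ref{LEMPA} is exactly the tool for it. In the \emph{first} run of the algorithm, $Z$ (with $d(\SET X,Z)=:k\ge 1$; the case $d(\SET X,Z)=0$ forces $Z\in\SET X$, hence not endogenous in $T'$, and $d(\SET X,Z)=-1$ means $Z$ is untouched by either run) is set to $f_Z(s(PA_Z))$ at stage $k$, using the parents' values at that moment. By Lemma \ref{LEMPA}, every $W\in PA_Z$ has $d(\SET X,W)<k$, so $W$ is modified only at an earlier stage (at stage $0$ if $W\in\SET X$, at stage $d(\SET X,W)$ if this is $\ge 1$, never if it is $-1$); hence neither $Z$ nor any member of $PA_Z$ is touched after stage $k$, and the identity $s(Z)=f_Z(s(PA_Z))$ persists to the final team $T'$. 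The main subtlety to watch is that all distances are taken in $G(T)_{-\SET X}$, so that variables of $\SET X$ carry distance $\le 0$ and never reappear at stages $\ge 1$, which is what makes the degenerate cases above harmless. For completeness I would close with a one-line remark that over unique-solution parametric causal teams the statement is even more transparent: each $s\in(T')^-$ is already the unique solution of the intervened system for the exogenous data $s(\SET U)$, so a second intervention returns it unchanged.
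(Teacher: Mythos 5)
Your proof is correct and takes essentially the same approach as the paper, whose entire argument is the one-liner ``by a simple induction on the steps of the `for' cycle in the $do$ algorithm''; you have simply supplied the details of that induction, in particular the use of Lemma \ref{LEMPA} to see that the identities $s(Z)=f_Z(s(PA_Z))$ established during the first run persist into $T_{\SET X=\SET x}$, so that every update in the second run is the identity. Nothing further is needed.
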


\begin{proof}
By a simple induction on the steps of the ``for'' cycle in the $do$ algorithm.
\end{proof}

\begin{lem}\label{LEMSETDIFF}
Let $T$ be a causal team with $G(T)$ finite acyclic, $\mathbf{X},\mathbf{Y}\in dom(T)$, $\mathbf{x}\in Ran(\mathbf{X})$, and $\mathbf{y}\in Ran(\mathbf{Y})$. Suppose also that, if $X_i=Y_j\in \mathbf{X}\cap \mathbf{Y}$, then $x_i = y_j$. Let $\mathbf{Y'} = \mathbf{Y} \setminus \SET{X}$, and $\mathbf{y'}=\{y_j|Y_j\in\mathbf{Y'}\}$. Then $(T_{\mathbf{X}=\mathbf{x}})_{\mathbf{Y}=\mathbf{y}} = (T_{\mathbf{X}=\mathbf{x}})_{\mathbf{Y'}=\mathbf{y'}}$.
\end{lem}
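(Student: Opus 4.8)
The plan is to check componentwise that $(T_{\SET X=\SET x})_{\SET Y=\SET y}$ and $(T_{\SET X=\SET x})_{\SET Y'=\SET y'}$ coincide, with essentially all the work falling on the team component. First I would dispose of the other three components. As in the previous subsection I would assume (reducing to the fully explicit team of Section~\ref{SUBSEXPTEAM} if necessary, which is harmless since an intervention on a team is defined via its fully explicit version) that all causal teams in sight are explicit; note that an intervention applied to an explicit causal team yields an explicit one. The ranges are never touched by a $do$ algorithm, so the $\mathcal R$ components agree. Removing the arrows into $\SET X$ and then those into $\SET Y$ produces $G(T)_{-(\SET X\cup\SET Y)}$, whereas removing those into $\SET X$ and then into $\SET Y'$ produces $G(T)_{-(\SET X\cup\SET Y')}$; these are equal because $\SET X\cup\SET Y=\SET X\cup(\SET Y\setminus\SET X)=\SET X\cup\SET Y'$. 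The same identity shows that both teams have endogenous variables $\SET V\setminus(\SET X\cup\SET Y)$ and that both $\mathcal F$ components are the restriction of $\mathcal F_T$ to that set, since each $do$ step merely discards the functions of the variables it intervenes on.

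For the team component, write $S:=T_{\SET X=\SET x}$. I would decompose $\SET Y$ into the disjoint union $(\SET X\cap\SET Y)\sqcup\SET Y'$ and use the hypothesis that $\SET x$ and $\SET y$ agree, with shared values $\SET x\cap\SET y$, on $\SET X\cap\SET Y$. Since the $do$ algorithm is determined by the \emph{set} of variable--value pairs it is fed (order and grouping being irrelevant, as already exploited in the proof of Theorem~\ref{PERM}), and $G(S)=G(T)_{-\SET X}$ is finite acyclic, Theorem~\ref{IMPEXP} applied to $S$ with the disjoint sets $\SET X\cap\SET Y$ and $\SET Y'$ gives
\[
S_{\SET Y=\SET y}=S_{(\SET X\cap\SET Y)=(\SET x\cap\SET y)\,\land\,\SET Y'=\SET y'}=\bigl(S_{(\SET X\cap\SET Y)=(\SET x\cap\SET y)}\bigr)_{\SET Y'=\SET y'}.
\]
Hence the lemma reduces to the ``redundant intervention'' identity $S_{(\SET X\cap\SET Y)=(\SET x\cap\SET y)}=S$, after which $S_{\SET Y=\SET y}=S_{\SET Y'=\SET y'}$, i.e. $(T_{\SET X=\SET x})_{\SET Y=\SET y}=(T_{\SET X=\SET x})_{\SET Y'=\SET y'}$, which combined with the first paragraph yields the full equality of causal teams.

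So I would isolate and prove the following generalization of Lemma~\ref{REP}: if $\SET W\subseteq\SET X$ and $\SET w$ is the restriction of $\SET x$ to $\SET W$, then $S_{\SET W=\SET w}=S$ (applied above with $\SET W=\SET X\cap\SET Y$). Running $do(\SET W=\SET w)$ on $S=T_{\SET X=\SET x}$, Stage~$0$ is vacuous: the $\SET W$-columns of $S$ already hold the values $\SET w$ and the arrows into $\SET W\subseteq\SET X$ are already absent, so $G(S)_{-\SET W}=G(S)=G(T)_{-\SET X}$. For the subsequent stages I would show by induction on $n$ that every variable $Z$ with $d_{G(S)}(\SET W,Z)\le n$ keeps its $S$-value throughout the run: for $d_{G(S)}(\SET W,Z)\le 0$ either $Z\in\SET W$ (untouched, by the previous sentence) or $Z$ is unreachable from $\SET W$ (never updated); and if $d_{G(S)}(\SET W,Z)=n+1$, the value of $Z$ is recomputed at stage $n+1$ as $\mathcal F_S(Z)(s(PA_Z))$, where by Lemma~\ref{LEMPA} every member of $PA_Z$ has distance $\le n$ from $\SET W$, hence --- by the inductive hypothesis, using acyclicity so that parent columns are not re-modified after stage $n+1$ --- still carries its $S$-value, and since $S$ is an explicit causal team, clause~c) of the definition forces $\mathcal F_S(Z)(s(PA_Z))=s(Z)$, so $Z$ is unchanged. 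Thus $S_{\SET W=\SET w}$ has team component $S^-$, completing the reduction.

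The only non-bookkeeping step is this last induction, and the point needing care is precisely the invocation of clause~c) inside the \emph{explicit} causal team $T_{\SET X=\SET x}$: it is the compatibility of the stored functions with the team that makes re-evaluating a descendant of $\SET W$ return the value it already had. For non-explicit teams the same conclusion holds, but one must instead track the values the intervention recovers from rows of the team (as in the constructions of Section~\ref{SUBSEXPTEAM}) rather than from the function component alone --- which is why it was convenient to reduce to the explicit case at the outset.
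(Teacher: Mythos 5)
Your proof is correct, but it takes a genuinely different route from the paper's. The paper's proof is pure bookkeeping with the lemmas already in hand: it decomposes the \emph{first} intervention as well, writing $T_{\SET X=\SET x}=(T_{\SET{X'}=\SET{x'}})_{\SET Z=\SET z}$ with $\SET{X'}=\SET X\setminus\SET Y$ and $\SET Z=\SET X\cap\SET Y$ via Theorem \ref{IMPEXP}, so that the repeated intervention appears as an \emph{exact} repetition $(\,\cdot\,_{\SET Z=\SET z})_{\SET Z=\SET z}$, collapses it with Lemma \ref{REP}, and then reassembles with Theorem \ref{IMPEXP} again. You instead decompose only $\SET Y$, and then prove directly the strengthening of Lemma \ref{REP} that is really at stake: re-intervening on any \emph{subset} $\SET W\subseteq\SET X$ with the consistent values is the identity on $T_{\SET X=\SET x}$. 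Your induction on evaluation distance (base case via the already-set $\SET W$-columns and already-removed arrows, inductive step via Lemma \ref{LEMPA} plus clause c) in the explicit intervened team) is sound and is the same style of argument the paper uses to justify Lemma \ref{REP} itself, just carried out in more generality; your explicit appeal to explicitness/clause d) to guarantee $s(PA_Z)\in dom(f_Z)$ is the right thing to check. What the paper's route buys is brevity — no new induction is needed once \ref{IMPEXP} and \ref{REP} are available; what yours buys is a self-contained explanation of \emph{why} the redundant sub-intervention is a no-op, and a reusable idempotence lemma that subsumes \ref{REP}. Your componentwise treatment of the graph, range, and function components in the first paragraph is also correct and slightly more careful than what the paper writes out.
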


\begin{proof}
Let $\SET{X'}=\SET{X}\setminus \SET{Y}$, and $\SET{x'}=\{x_j|X_j\in \SET{X'}\}$.
Let $\mathbf{Z} = \mathbf{X} \cap \mathbf{Y}$ and $\mathbf{z}= \SET x \cap \SET y$.
 Applying twice Theorem \ref{IMPEXP} (import-export), we get $(T_{\mathbf{X}=\mathbf{x}})_{\mathbf{Y}=\mathbf{y}} = (((T_{\mathbf{X'}=\mathbf{x'}})_{\mathbf{Z}=\mathbf{z}})_{\mathbf{Z} = \mathbf{z}})_{\mathbf{Y'}=\mathbf{y'}}$. But this last term is equal to $((T_{\mathbf{X'}=\mathbf{x'}})_{\mathbf{Z}=\mathbf{z}})_{\mathbf{Y'}=\mathbf{y'}}$, by Lemma \ref{REP}. Given that $\mathbf{Z}$ and $\mathbf{X'}$ are disjoint, we can apply again Theorem \ref{IMPEXP} to obtain $((T_{\mathbf{X'}=\mathbf{x'}})_{\mathbf{Z}=\mathbf{z}})_{\mathbf{Y'}=\mathbf{y'}} = (T_{\mathbf{X}=\mathbf{x}})_{\mathbf{Y'}=\mathbf{y'}}$. The chain of equalities yields the desired result.
\end{proof}

Next we show that the import-export rule works under the weaker hypothesis that the two interventions act in the same way on the shared set of variables.

\begin{teo}\label{FULLIMPEXP}
Let $T$ be a causal team with $G(T)$ finite acyclic, $\mathbf{X},\mathbf{Y}\in dom(T)$.  $\mathbf{x}\in ran(\mathbf{X})$, and $\mathbf{y}\in ran(\mathbf{Y})$. Suppose also that, if $X_i=Y_j\in \mathbf{X}\cap \mathbf{Y}$, then $x_i = y_j$. Then $T_{\mathbf{X}=\mathbf{x}\land \mathbf{Y}=\mathbf{y}} = (T_{\mathbf{X}=\mathbf{x}})_{\mathbf{Y}=\mathbf{y}}$.
\end{teo}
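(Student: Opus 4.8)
The plan is to reduce this generalized statement to the disjoint-sets case already settled in Theorem~\ref{IMPEXP}, peeling off the shared variables $\SET X\cap\SET Y$ by means of Lemma~\ref{LEMSETDIFF} and the idempotency Lemma~\ref{REP}. Essentially nothing new needs to be proved about the dynamics of interventions; the content is entirely bookkeeping about overlapping index sets, and that is exactly what those two lemmas were arranged to absorb.

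First I would set $\SET{Y'}=\SET Y\setminus\SET X$ and $\SET{y'}=\{y_j\mid Y_j\in\SET{Y'}\}$, as in the statement of Lemma~\ref{LEMSETDIFF}. Since the compatibility hypothesis ``$X_i=Y_j\in\SET X\cap\SET Y$ implies $x_i=y_j$'' is precisely the hypothesis of that lemma, it gives at once
\[
(T_{\SET X=\SET x})_{\SET Y=\SET y} = (T_{\SET X=\SET x})_{\SET{Y'}=\SET{y'}}.
\]
By construction $\SET X\cap\SET{Y'}=\emptyset$, so Theorem~\ref{IMPEXP} applies and yields
\[
(T_{\SET X=\SET x})_{\SET{Y'}=\SET{y'}} = T_{\SET X=\SET x\,\land\,\SET{Y'}=\SET{y'}}.
\]

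It then remains to observe that $do(\SET X=\SET x\land\SET Y=\SET y)$ and $do(\SET X=\SET x\land\SET{Y'}=\SET{y'})$ are literally the same algorithm, so that $T_{\SET X=\SET x\,\land\,\SET{Y'}=\SET{y'}} = T_{\SET X=\SET x\,\land\,\SET Y=\SET y}$. Indeed, the conjuncts of the first antecedent that are not listed among those of the second are exactly the $Y_j=y_j$ with $Y_j\in\SET Y\cap\SET X$; for each such $Y_j$ there is an $X_i$ denoting the same variable, and the compatibility hypothesis forces $x_i=y_j$, so the conjunct $Y_j=y_j$ is identical to the conjunct $X_i=x_i$, which is already present. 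By the convention fixed after the definition of the $do$ algorithm (several occurrences of the same variable are allowed in a conjunctive intervention provided they agree on its value, and such repetitions are immaterial), both antecedents mention the same set of variables $\SET X\cup\SET Y$ with the same assigned values, induce the same reduced graph $G(T)_{-(\SET X\cup\SET Y)}$, the same evaluation distances, and the same restriction of $\mathcal F_T$; hence they produce the same causal team. Chaining the three displayed equalities gives $T_{\SET X=\SET x\,\land\,\SET Y=\SET y} = (T_{\SET X=\SET x})_{\SET Y=\SET y}$.

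Finally, the two inference rules follow from the equality of causal teams exactly as in Theorem~\ref{IMPEXP}: applying the semantic clause for $\cf$ twice on the left and once on the right, $T\models\SET X=\SET x\cf(\SET Y=\SET y\cf\chi)$ iff $(T_{\SET X=\SET x})_{\SET Y=\SET y}\models\chi$ iff $T_{\SET X=\SET x\,\land\,\SET Y=\SET y}\models\chi$ iff $T\models(\SET X=\SET x\land\SET Y=\SET y)\cf\chi$. I do not expect any real obstacle here: the one substantive step --- that interventions over disjoint variable sets compose --- was already carried out in Theorem~\ref{IMPEXP}, and the present argument only has to manage the overlap, which Lemmas~\ref{REP} and~\ref{LEMSETDIFF} handle cleanly.
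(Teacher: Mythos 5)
Your proof is correct and follows essentially the same route as the paper's: apply Lemma~\ref{LEMSETDIFF} to replace $\SET Y$ by $\SET{Y'}=\SET Y\setminus\SET X$, invoke the disjoint-sets case (Theorem~\ref{IMPEXP}), and then note that the antecedents $\SET X=\SET x\land\SET Y=\SET y$ and $\SET X=\SET x\land\SET{Y'}=\SET{y'}$ define the same intervention because the omitted conjuncts are mere repetitions forced to agree by the compatibility hypothesis. Nothing to add.
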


\begin{proof}
Apply Lemma \ref{LEMSETDIFF} to see that $(T_{\mathbf{X}=\mathbf{x}})_{\mathbf{Y}=\mathbf{y}} = (T_{\mathbf{X}=\mathbf{x}})_{\mathbf{Y'}=\mathbf{y'}}$, where $\mathbf{Y'}=\mathbf{Y}\setminus\mathbf{X}$. Given that $\mathbf{X}$ and $\mathbf{Y'}$ are disjoint sets of variables, we can use the weak import-export rule (Theorem \ref{IMPEXP}) to obtain $(T_{\mathbf{X}=\mathbf{x}})_{\mathbf{Y'}=\mathbf{y'}} = T_{\mathbf{X}=\mathbf{x} \land \mathbf{Y'}=\mathbf{y'}}$. But now the formula $\mathbf{X}=\mathbf{x} \land \mathbf{Y}=\mathbf{y}$ differs from $\mathbf{X}=\mathbf{x} \land \mathbf{Y'}=\mathbf{y'}$ only in that it contains some repetitions of conjuncts that are already present in $\mathbf{X}=\mathbf{x} \land \mathbf{Y'}=\mathbf{y'}$. Therefore, the two formulas define the same interventions (as can be seen by checking the $do$ algorithm); this immediately implies that $T_{\mathbf{X}=\mathbf{x} \land \mathbf{Y}=\mathbf{y}} = T_{\mathbf{X}=\mathbf{x} \land \mathbf{Y'}=\mathbf{y'}}$. The whole chain of equalities yields the result.
\end{proof}

Again, under these hypotheses the order of the interventions is irrelevant.

\begin{teo}\label{FULLPERM}
Let $T$ be a causal team with $G(T)$ finite acyclic, $\mathbf{X},\mathbf{Y}\in dom(T)$, $\mathbf{x}\in ran(\mathbf{X})$, and $\mathbf{y}\in ran(\mathbf{Y})$. Suppose also that, if $X_i=Y_j\in \mathbf{X}\cap \mathbf{Y}$, then $x_i = y_j$. Then $(T_{\mathbf{X}=\mathbf{x}})_{\mathbf{Y}=\mathbf{y}} = (T_{\mathbf{Y}=\mathbf{y}})_{\SET{X}=\mathbf{x}}$.
\end{teo}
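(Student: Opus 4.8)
The plan is to deduce this identity from the generalized import-export theorem (Theorem~\ref{FULLIMPEXP}), in exactly the same way that Theorem~\ref{PERM} was obtained from Theorem~\ref{IMPEXP}. The key observation is that the hypothesis ``if $X_i=Y_j\in\SET{X}\cap\SET{Y}$ then $x_i=y_j$'' is symmetric in the pair $(\SET X,\SET x)$, $(\SET Y,\SET y)$, so Theorem~\ref{FULLIMPEXP} applies with the two interventions in either order.

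Concretely, I would proceed as follows. First, apply Theorem~\ref{FULLIMPEXP} as stated to get $(T_{\SET{X}=\SET{x}})_{\SET{Y}=\SET{y}} = T_{\SET{X}=\SET{x}\land\SET{Y}=\SET{y}}$. Second, since the compatibility hypothesis on the common variables is symmetric, apply Theorem~\ref{FULLIMPEXP} again with the roles of the two interventions swapped to obtain $(T_{\SET{Y}=\SET{y}})_{\SET{X}=\SET{x}} = T_{\SET{Y}=\SET{y}\land\SET{X}=\SET{x}}$. Third, recall from the definition of the $do$ algorithm (and as already used in the proof of Theorem~\ref{FULLIMPEXP}) that the operation $do(\SET{X}=\SET{x}\land\SET{Y}=\SET{y})$ depends only on the \emph{set} of conjuncts appearing, not on their order or multiplicity; hence $\SET{X}=\SET{x}\land\SET{Y}=\SET{y}$ and $\SET{Y}=\SET{y}\land\SET{X}=\SET{x}$ induce literally the same intervention, so $T_{\SET{X}=\SET{x}\land\SET{Y}=\SET{y}} = T_{\SET{Y}=\SET{y}\land\SET{X}=\SET{x}}$. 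Chaining these three equalities by transitivity yields $(T_{\SET{X}=\SET{x}})_{\SET{Y}=\SET{y}} = (T_{\SET{Y}=\SET{y}})_{\SET{X}=\SET{x}}$, as required.

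The rule $(PERM)$ then follows immediately from the semantical clause for counterfactuals: if $T\models \SET X=\SET x \cf (\SET Y=\SET y \cf \chi)$, then $(T_{\SET X=\SET x})_{\SET Y=\SET y}\models\chi$, hence by the team identity just proved $(T_{\SET Y=\SET y})_{\SET X=\SET x}\models\chi$, which gives $T\models \SET Y=\SET y \cf (\SET X=\SET x \cf \chi)$.

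I do not expect any real obstacle here: all the substantive work — the double induction on evaluation distances, Lemma~\ref{LEMPA}, the bookkeeping of $\mathcal F$-components, and the reduction $\SET Y\rightsquigarrow \SET Y\setminus\SET X$ via Lemma~\ref{REP} and Lemma~\ref{LEMSETDIFF} — has already been carried out in the proof of Theorem~\ref{FULLIMPEXP}. The only points requiring a moment's care are (i) checking that the compatibility hypothesis genuinely transfers under the swap, which it does since it is stated symmetrically, and (ii) justifying the order-independence of conjuncts in the antecedent of a $do$-intervention, which is a direct inspection of the algorithm and is already invoked in the preceding proof.
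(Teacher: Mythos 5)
Your proposal is correct and follows exactly the paper's own route: the paper proves Theorem~\ref{FULLPERM} by repeating the argument of Theorem~\ref{PERM} with Theorem~\ref{FULLIMPEXP} in place of Theorem~\ref{IMPEXP}, i.e.\ two applications of the generalized import-export identity (using the symmetry of the compatibility hypothesis) plus the order-irrelevance of conjuncts in the $do$ algorithm and transitivity. Nothing to add.
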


\begin{proof}
This works like the proof of Theorem \ref{PERM}, using the relaxed import-export rule (Theorem \ref{FULLIMPEXP}) instead of Theorem \ref{IMPEXP}.
\end{proof}

\subsection{Composition and effectiveness rules}\label{SUBSCOMPL}

Galles and Pearl (\cite{GalPea1998}) present a complete and sound system for interventionist counterfactuals over recursive causal models (to be more precise, the result holds only for causal models that are parametric and under the restriction that the range of each variable is \emph{finite}). This system is based on two main rules called \emph{composition} and \emph{effectiveness}. In informal presentations (\cite{Pea2000}), Pearl claims that these two axioms exhaust the system; a closer look at the original paper (and at a subsequent clarifying paper by Halpern, \cite{Hal2000}) shows that two further axioms (\emph{definiteness} and \emph{uniqueness}), plus ``the rules of logic'' are part of the system. Let us first point out that Pearl uses a very different syntax from ours which originates in a tradition of representation of counterfactuals common in the statistical literature (the ``potential outcome'' approach, \cite{Ney1923},\cite{Rub1974}). He uses
\[
Y_x(u) =y
\]
to mean that, in an actual state in which the (set of) exogenous variables $U$ take values $u$, if we fixed the value of $X$ to $x$ by an intervention, then $Y$ would take value $y$. Given that in a recursive structural equation model the endogenous variables are functionally determined by the exogenous ones, we can represent such an assignment $u$ of values to the exogenous variables as a unique assignment $s_u$. Therefore, we might represent Pearl's counterfactual in a notation closer to the logical tradition as:
\[
s_u \models X=x \cf Y = y.
\]

Assuming that, unless otherwise specified, variables and values are implicitly universally quantified, here is now the list of properties that Galles and Pearl ascribe to causal models with unique solutions (which include recursive models): 
\[
(Composition): W_x(u) = w \Rightarrow Y_{xw}(u) = Y_x(u)
\]
\[
(Effectiveness): X_{xw}(u)=x 
\]
\[
(Definiteness):
 \text{ there is an $x$ such that } X_{\SET{y}}(u)=x 
\]
\[
(Uniqueness): X_{\SET{y}}(u)=x \land X_{\SET{y}}(u)=x' \Rightarrow x=x'
\]
to which, according to Halpern (\cite{Hal2000}), we should add all classical propositional tautologies plus Modus Ponens. A further axiom scheme (call it REC) forces the system to be recursive (i.e., the graph to be acyclic). Translating the REC axiom into our language is somewhat problematic; we consider this issue towards the end of the present subsection.


Do these axioms hold for our causal teams? First of all, we should decide how these axioms should be interpreted in our framework. One possible way to go is to conceive these counterfactuals as encoding properties of a team instead of an actual state; that is, to have a team $T$ play the role that was previously played by $u$ (or by $s_u$). It is then straightforward to translate effectiveness as the statement that each causal team $T$ satisfies:
\[
(EFF): \models (X=x \land \SET{W}=\SET{w}) \cf X=x.
\]
Translating composition is a more complex task. First of all, Galles\&Pearl have an operator $\Rightarrow$ which is most likely intended as a material implication (which, by the deduction theorem, reflects logical implication). We do not have in $\CD$ any such operator (we know that our selective implication  $\supset$ obeys a deduction theorem, but only for $\CO$ antecedents). For this reason, we will focus here on the languages $\C^{neg}$ and $\CO^{neg}$, which are flat and closed under dual negation. We can therefore add modus ponens in the following form:
\[
MP_\lor: \frac{\theta \hspace{15pt} \neg\theta\lor\chi}{\chi}.
\]

However, in order to facilitate the transition to more general languages like $\CD$, which are not closed under negation, we may also adopt the strategy of rewriting all the implications corresponding to Pearl's axioms as inference rules. For example, it will be reasonable to move from the formal language to the semantic metalanguage, and think of composition as a rule of inference rather than an axiom. The composition rule poses a further problem: the formula $Y_{xw}(u) = Y_x(u)$ is not a counterfactual. We may think instead of the equivalent expression (for all $y\in \mathcal R(Y)$): $Y_{xw}(u) = y \iff Y_x(u)=y$. It then becomes natural to decompose ``composition'' into \emph{two} rules of inference:
\[
(CE): \frac{\SET{X}=\SET{x} \cf W=w \hspace{25pt} (\SET{X}=\SET{x} \land W=w)\cf Y=y}{\SET{X}=\SET{x} \cf Y=y}
\]
\[
(CI): \frac{\SET{X}=\SET{x} \cf W=w \hspace{25pt} \SET{X}=\SET{x}\cf Y=y}{(\SET{X}=\SET{x} \land W=w) \cf Y=y}
\]
Definiteness (``Decidability'') contains an existential quantification. \emph{If we assume the ranges of variables to be finite}, then we can formulate this axiom in a way that works relativized to teams that have a specific finite range for $X$:
\[
(DEC): \lor_{x\in Ran(X)} (\SET{Y}=\SET{y}\cf X=x)
\]
What this axiom expresses about a team is that the team can be decomposed into subteams, each of which satisfies, after the intervention $do(\SET Y = \SET y)$, one of the $X=x$ atoms; it is true of any parametric causal team (in the nonparametric case, the disjunction should not be taken over $Ran(X)$, but over $Ran(X)\cup Term_{G(T)}$).

The uniqueness axiom mentions formulas of the form $x=x'$, which are absent from our language. But we can think of a reformulation of these axioms due to Halpern (which he simply calls the ``equality'' axiom scheme), which rephrases it as a scheme of rules of inference (one rule for each variable $X$, set of variables $\SET{Y}$, and pair of values $x,x'\in Ran(X)$ with $x\neq x'$):
\[
(UNI): \frac{\SET{Y}=\SET{y}\cf X=x}{\SET{Y}=\SET{y}\cf X\neq x'}
\]
The import of this axiom seems much weaker than in the Galles-Pearl case, given that $X=x$ and $X\neq x'$ are global statements over many assignments.

Our translations of the axioms and rules all fall within the language $\C^{neg}$. By its flatness, it immediately follows that the axioms EFF and the appropriate instances of DEC also hold for our causal teams.

\begin{teo}
The rules CE, CI and UNI are sound for parametric acyclic causal teams. 
\end{teo}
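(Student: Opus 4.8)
The plan is to reduce the three rules to the flatness of $\C^{neg}$ (already established) together with a single lemma expressing that re-fixing a variable at the value it already carries is inert on the team component. I would begin with UNI, which is immediate and needs neither flatness nor parametricity: if $T\models \SET Y=\SET y \cf X=x$ then every assignment of $(T_{\SET Y=\SET y})^-$ sends $X$ to $x$, hence — since $x\neq x'$ — to a value different from $x'$, so $T_{\SET Y=\SET y}\models X\neq x'$ and therefore $T\models \SET Y=\SET y\cf X\neq x'$; the case of an inconsistent antecedent is trivially true by the postulate for counterfactuals.

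The heart of the argument is the lemma: \emph{if $S$ is a parametric recursive causal team with $S\models W=w$, then $(S_{W=w})^- = S^-$}. I would prove it by noting first that parametricity together with conditions a) and c) in the definition of causal team forces every $s\in S^-$ to satisfy all of $S$'s structural equations. Since the $do$ algorithm acts on each assignment separately as far as the team component is concerned, it then suffices to run $do(W=w)$ on an arbitrary $\{s\}$ with $s(W)=w$ and check, by induction on the evaluation distance $d_{G(S)}(\{W\},\cdot)$, that the assignment is unchanged: stage $0$ resets the $W$-entry to $w=s(W)$ and changes nothing; at stage $n+1$ a variable $Z$ with $d_{G(S)}(\{W\},Z)=n+1$ is reset to $f_Z(s(PA_Z))$, but by Lemma \ref{LEMPA} its parents have strictly smaller distance, so by the induction hypothesis they still carry their original values and $f_Z(s(PA_Z))=s(Z)$ because $s$ solves the equation for $Z$. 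The graph and $\mathcal F$ components of $S_{W=w}$ do differ from those of $S$, but this is irrelevant for evaluating an atom of the form $Y=y$.

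With the lemma available, CE and CI are short. I would first observe that we may assume $W$ does not occur in $\SET X$: if it does and $\SET x$ assigns it $w$, then $\SET X=\SET x$ and $\SET X=\SET x\land W=w$ define the same intervention and the rules collapse to triviality; if $\SET x$ assigns it some value $\neq w$, then $\SET X=\SET x\land W=w$ is inconsistent, while on a nonempty team the premise $\SET X=\SET x\cf W=w$ already fails (and on the empty team every formula holds, by the empty set property). Assuming then $\SET X\cap\{W\}=\emptyset$, Theorem \ref{IMPEXP} gives $T_{\SET X=\SET x\land W=w}=(T_{\SET X=\SET x})_{W=w}$. Writing $S:=T_{\SET X=\SET x}$ (which is again parametric and recursive), the premise $T\models \SET X=\SET x\cf W=w$ says $S\models W=w$, so the lemma yields $(S_{W=w})^-=S^-$. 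For CE the second premise then reads $S_{W=w}\models Y=y$, i.e. $s(Y)=y$ for all $s\in S^-$, i.e. $S\models Y=y$, which is exactly $T\models \SET X=\SET x\cf Y=y$. For CI the two premises read $S\models W=w$ and $S\models Y=y$, hence $s(Y)=y$ for all $s\in(S_{W=w})^-=S^-$, giving $S_{W=w}\models Y=y$, i.e. the conclusion $T\models(\SET X=\SET x\land W=w)\cf Y=y$.

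The only genuinely nontrivial step is the lemma, and it is precisely there that the hypothesis \emph{parametric} is used: it is what guarantees that the recorded value of each endogenous variable is the one its invariant function produces from the recorded values of its parents, so that the re-evaluation performed by $do(W=w)$ reproduces the very same assignments. (One could alternatively invoke the flatness of $\C^{neg}$ to reduce CE and CI to singleton causal teams and then appeal to the soundness of Galles and Pearl's composition axiom for recursive structural equation models, but the direct argument above is self-contained.)
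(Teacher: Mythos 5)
Your proposal is correct and follows essentially the same route as the paper: UNI is immediate, and CE/CI are reduced via Theorem \ref{IMPEXP} to the observation that applying $do(W=w)$ to $T_{\SET X=\SET x}$ leaves its team component unchanged when $T_{\SET X=\SET x}\models W=w$. The only difference is that you spell out (by induction on evaluation distance, using parametricity and clause c) of the definition) the inertness step that the paper dismisses as obvious, and you handle the $W\in\SET X$ case slightly more explicitly; both are welcome but not substantive departures.
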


\begin{proof}
CE) Assume $T$ satisfies the two assumptions of the rule. Since the consequent $Y=y$ of the counterfactuals involved is a formula that does not contain counterfactuals, in order to prove that $T$ satisfies the conclusion of the rule, it is sufficient to show that $(T_{\SET{X}=\SET{x}})^- = (T_{\SET{X}=\SET{x} \land W = w})^-$. In case $W\in \SET{X}$, the rule holds for trivial reasons. Otherwise, a previous theorem (\ref{IMPEXP}) tells us that $T_{\SET{X}=\SET{x} \land W = w} = (T_{\SET{X}=\SET{x}})_{W = w}$. The first assumption of CE tells us that $T_{\SET{X}=\SET{x}} \models W=w$. But then, obviously, applying $do(W=w)$ to $T_{\SET{X}=\SET{x}}$ leaves its underlying team unvaried.

CI) An analogous argument.

UNI) Immediate.
\end{proof}

Finally, notice that, by the flatness of $\C^{neg}$, all classical validities that can be formulated in $\C^{neg}$ are sound for acyclic teams. (But remember that some classical semantic principles, like SEM, fail).
Halpern (\cite{Hal2000}) considers a language $L_{uniq}$ that allows boolean formulas as consequents, and boolean combinations of counterfactuals, but does not allow embeded counterfactuals. This language can be identified with an appropriate fragment of $\C^{neg}$, call it $\C_u$ . What Halpern shows is that the axioms described above form a sound and complete system for $L_{uniq}$ over recursive (i.e. acyclic) causal models. In particular, they axiomatize the set of valid formulas of the Galles-Pearl language (which is restricted to conjunctions of counterfactuals of the form $\SET{X}=\SET{x} \cf Y=y$), although derivations may use formulas that are not part of the Galles-Pearl language.

As we have seen, these axioms and rules are sound also for $\C^{neg}$, over recursive causal teams. But unlike $\C_u$, the language $\C^{neg}$ also allows counterfactual conditionals to occur in the consequents. Thus it may be that some more axioms are needed in order to obtain a complete system for $\C^{neg}$ over recursive causal teams.

Our strategy for the rest of this section is to show that any counterfactual in $\C^{neg}$ is equivalent to a counterfactual with a "simple" consequent (i.e., a consequent which does not contain embedded counterfactuals, or even atomic formulas of the form $Y\neq y$), and to isolate the rules that allow this transformation. 

Our first observation is that Halpern \cite{Hal2000} proves that recursive causal models (or, more generally, causal models whose equations have unique solutions) satisfy some rules that allow removing connectives from consequents. In our language, these rules may be stated as follows:

\[
(OR-OUT): \frac{\SET{X}=\SET{x}\cf \psi\lor \psi'}{(\SET{X}=\SET{x}\cf \psi)\lor(\SET{X}=\SET{x}\cf \psi')}
\]
\[
(AND-OUT): \frac{\SET{X}=\SET{x}\cf \psi\land \psi'}{(\SET{X}=\SET{x}\cf \psi)\land(\SET{X}=\SET{x}\cf \psi')}
\]
\[
(NEG-OUT): \frac{\SET{X}=\SET{x}\cf \neg\psi}{\neg(\SET{X}=\SET{x}\cf \psi)}
\]
Halpern also shows the soundness of their inverses (call them OR-IN, AND-IN, NEG-IN). We check now that the same holds in our causal teams.

\begin{lemma} \label{INTSPLIT}
Suppose a causal team of the form $T_{\SET X = \SET x}$ has causal subteams $U',V'$ such that $(U')^-\cup (V')^- = (T_{\SET X = \SET x})^-$. Then $T$ has causal subteams $U,V$ such that $U_{\SET X = \SET x} = U'$ and $V_{\SET X = \SET x} = V'$.
\end{lemma}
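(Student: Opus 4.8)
The statement asks: given that the intervened team $T_{\SET X = \SET x}$ splits as a union of two causal subteams $U', V'$, we can find causal subteams $U, V$ of the original $T$ that map onto $U', V'$ under the intervention $do(\SET X = \SET x)$. The plan is to define $U$ and $V$ by pulling back along the intervention. Recall from the description of the $do$ algorithm that it acts on each assignment of $T^-$ separately (modulo the graph modification, which is irrelevant to $T^-$); hence the intervention induces a well-defined map $\iota : T^- \to (T_{\SET X = \SET x})^-$ sending each $s$ to the assignment $\iota(s)$ obtained by running the algorithm on $\{s\}$. The key point is that $(T_{\SET X = \SET x})^- = \iota[T^-]$, i.e. $\iota$ is surjective onto the support of the intervened team.

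**Defining the pullbacks.** The plan is to set $U^- := \iota^{-1}[(U')^-]$ and $V^- := \iota^{-1}[(V')^-]$, as subsets of $T^-$, and to let $U, V$ be the causal subteams of $T$ (equivalently, of the associated explicit team $T^E$, but since we may assume $T$ is explicit by the earlier reductions, this is harmless) with these supports and with graph, ranges, and functions inherited from $T$. First I would check these are genuine causal subteams: conditions 1–4 in the definition of causal subteam are immediate since $U^-, V^- \subseteq T^-$ and all other components are copied from $T$; the only thing to verify is that $U, V$ satisfy the defining conditions a)–c) of a causal team, but these are inherited because they are universally quantified over assignments and hold for all of $T^-$ (in particular clause b), the functional dependency, is preserved under taking subteams). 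Then $U^- \cup V^- = \iota^{-1}[(U')^-] \cup \iota^{-1}[(V')^-] = \iota^{-1}[(U')^- \cup (V')^-] = \iota^{-1}[(T_{\SET X = \SET x})^-] = T^-$, using surjectivity of $\iota$; so $U, V$ do partition (cover) $T$ as required.

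**The main step: $U_{\SET X = \SET x} = U'$.** This is where the work lies. Since the $do$ algorithm acts assignment-by-assignment, $(U_{\SET X = \SET x})^- = \iota[U^-] = \iota[\iota^{-1}[(U')^-]]$. Because $U'$ is a causal \emph{subteam} of $T_{\SET X = \SET x}$, we have $(U')^- \subseteq (T_{\SET X = \SET x})^- = \iota[T^-]$, so $\iota[\iota^{-1}[(U')^-]] = (U')^-$. Hence the supports agree. For the remaining components: $G(U_{\SET X = \SET x}) = G(T)_{\SET X}= G(U')$ since $U'$ is a subteam of $T_{\SET X = \SET x}$ and the graph modification in the algorithm depends only on $G(T) = G(U)$ and on $\SET X$; similarly $\mathcal R_{U_{\SET X = \SET x}} = \mathcal R_T = \mathcal R_{U'}$ since ranges are untouched by the algorithm and by subteam formation; and $\mathcal F_{U_{\SET X = \SET x}} = \mathcal F_T{\restriction}_{\SET V \setminus \SET X} = \mathcal F_{U'}$, again because the functional component produced by $do(\SET X = \SET x)$ depends only on the input functional component ($=\mathcal F_T$ for both $U$ and $T$, by explicitness) and on $\SET X$. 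So $U_{\SET X = \SET x} = U'$, and symmetrically $V_{\SET X = \SET x} = V'$.

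**Anticipated obstacle.** The one genuinely delicate point is the claim that the intervention acts separately on assignments and that $\iota$ is therefore well-defined and surjective onto the support — I would make sure to cite the remark in subsection \ref{SUBSDEFINT} ("except for the modifications to $G(T)$, it would be the same to apply the algorithm to each assignment separately") and, if $T$ is not explicit, first replace $T$ by its fully explicit counterpart as is done throughout. A secondary subtlety is that two distinct assignments of $T^-$ may collapse to the same assignment of $(T_{\SET X = \SET x})^-$ (as in the examples where rows merge after an intervention); this is exactly why we define $U^-, V^-$ as full preimages rather than trying to pick sections, and it is harmless for the argument above. Everything else is bookkeeping with the definitions.
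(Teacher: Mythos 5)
Your proposal is correct and follows essentially the same route as the paper: the paper defines, for each $t\in (T_{\SET X=\SET x})^-$, the fiber $S_t=\{s\in T^-\mid \{s\}_{\SET X=\SET x}=\{t\}\}$ and sets $U^-:=\bigcup_{t\in U'}S_t$, which is exactly your preimage $\iota^{-1}[(U')^-]$, and then verifies $(U_{\SET X=\SET x})^-=(U')^-$ by the same two-inclusion argument you package as $\iota[\iota^{-1}[(U')^-]]=(U')^-$. Your additional checks (that $U,V$ are genuine causal subteams, that the non-team components match, and that $U^-\cup V^-=T^-$) are details the paper leaves implicit, and your remark about collapsing rows being the reason to take full preimages is exactly the right point of care.
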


\begin{proof}
Let $T,U',V'$ as above. For each $t\in (T_{\SET X = \SET x})^-$, define the set of assignments $S_t = \{s\in T^-| \{s\}_{\SET X = \SET x} = \{t\}\}$. Define then $U^-:= \bigcup_{t\in U'} S_t$ and $V^-:= \bigcup_{t\in V'} S_t$. Define $G(U), \mathcal F_U$, etc. in the obvious way.

Let us show that $U,V$ behave as wanted. Let $t\in (U')^-$. Pick an $s \in S_t \subseteq U^-$. $\{s\}_{\SET X = \SET x} = \{t\}$; so, $t\in (U_{\SET X = \SET x})^-$. This proves $(U')^-\subseteq(U_{\SET X = \SET x})^-$.

Let $t\notin (U')^-$. Suppose for the sake of contradiction that there is $s\in U$ such that $\{s\}_{\SET X = \SET x}= \{t\}$. But then there is a $t'\in U'$ such that $s\in S_{t'}\subseteq U^-$. But this implies that $\{t\} = \{s\}_{\SET X = \SET x} = \{t'\}$. Therefore $t \in (U')^-$: contradiction.
\end{proof}

\begin{teo}
The rules OR-OUT, AND-OUT, NEG-OUT, OR-IN, AND-IN, NEG-IN are sound for $\C^{neg}$ on parametric causal teams with unique solutions; except for NEG-IN/OUT, this also holds in $\CD$.
\end{teo}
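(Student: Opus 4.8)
The plan is to verify each of the six rules directly from the semantic clause $T\models \SET X=\SET x\cf\psi \iff T_{\SET X=\SET x}\models\psi$, combined with the clauses for $\land$, $\lor$ and (for the NEG rules) dual negation. As in the rest of the section I would assume causal teams to be explicit; by the Remark preceding Lemma \ref{LEMPA} and the construction in subsection \ref{SUBSEXPTEAM} this is harmless, and it guarantees that the graph, ranges and functional component are carried over unchanged both when passing to a causal subteam and when applying an intervention. The conjunction rules are then immediate: $T\models \SET X=\SET x\cf(\psi\land\psi')$ iff $T_{\SET X=\SET x}\models\psi$ and $T_{\SET X=\SET x}\models\psi'$, iff $T\models\SET X=\SET x\cf\psi$ and $T\models\SET X=\SET x\cf\psi'$, iff $T\models(\SET X=\SET x\cf\psi)\land(\SET X=\SET x\cf\psi')$. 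This argument uses neither flatness nor uniqueness of solutions, so AND-OUT and AND-IN hold in $\CD$ as well.

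For OR-OUT, suppose $T\models\SET X=\SET x\cf(\psi\lor\psi')$, so $T_{\SET X=\SET x}\models\psi\lor\psi'$; the disjunction clause yields causal subteams $U',V'$ of $T_{\SET X=\SET x}$ with $(U')^-\cup(V')^-=(T_{\SET X=\SET x})^-$, $U'\models\psi$, $V'\models\psi'$. Lemma \ref{INTSPLIT} produces causal subteams $U,V$ of $T$ with $U_{\SET X=\SET x}=U'$, $V_{\SET X=\SET x}=V'$, and inspecting its proof one sees that $U^-\cup V^-=T^-$ (every $s\in T^-$ is sent by the intervention to a $t\in(T_{\SET X=\SET x})^-$, which lies in $(U')^-$ or $(V')^-$). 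Hence $U\models\SET X=\SET x\cf\psi$ and $V\models\SET X=\SET x\cf\psi'$, and the disjunction clause for $T$ gives $T\models(\SET X=\SET x\cf\psi)\lor(\SET X=\SET x\cf\psi')$. For OR-IN, if $U,V$ are causal subteams of $T$ witnessing $(\SET X=\SET x\cf\psi)\lor(\SET X=\SET x\cf\psi')$, then $U_{\SET X=\SET x}\models\psi$ and $V_{\SET X=\SET x}\models\psi'$; since the $do$ algorithm acts on each assignment separately, the map $s\mapsto s_{\SET X=\SET x}$ sends $U^-\cup V^-=T^-$ onto $(T_{\SET X=\SET x})^-$, so $U_{\SET X=\SET x},V_{\SET X=\SET x}$ are causal subteams of $T_{\SET X=\SET x}$ covering it, whence $T_{\SET X=\SET x}\models\psi\lor\psi'$ and $T\models\SET X=\SET x\cf(\psi\lor\psi')$. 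Only Lemma \ref{INTSPLIT} and the per-assignment description of the $do$ algorithm (subsection \ref{SUBSDEFINT}) are used, so OR-OUT and OR-IN hold in $\CD$ too.

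For NEG-OUT and NEG-IN I would exploit that over parametric causal teams with unique solutions an intervention carries a singleton causal team to a singleton (from the $do$ algorithm, resp.\ the unique-solution definition), together with flatness of $\C^{neg}$ (Theorem \ref{TEOFLAT}). Writing $\{t_s\}:=\{s\}_{\SET X=\SET x}$, the assignment $s\mapsto t_s$ maps $T^-$ onto $(T_{\SET X=\SET x})^-$. Then $T\models\neg(\SET X=\SET x\cf\psi)$ iff for all $s\in T^-$ one has $\{t_s\}\not\models\psi$, iff for all $t\in(T_{\SET X=\SET x})^-$ one has $\{t\}\not\models\psi$, iff (by flatness of $\psi$) $T_{\SET X=\SET x}\models\neg\psi$, iff $T\models\SET X=\SET x\cf\neg\psi$. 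This single chain of equivalences gives both NEG-OUT and NEG-IN; and since it relies on dual negation, which is absent from $\CD$, these two rules are not even expressible there — matching the final clause of the statement.

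The main obstacle is not mathematical depth but bookkeeping: one must ensure that the set-theoretic manipulations of supports lift to the full causal-team structure, so that the objects produced ($U,V$ in OR-OUT, and $U_{\SET X=\SET x},V_{\SET X=\SET x}$ in OR-IN) are genuine causal subteams of the relevant team and that their supports really cover it. Restricting to explicit causal teams and leaning on Lemma \ref{INTSPLIT} makes this routine; the one point worth isolating explicitly is the surjectivity of the intervention map $s\mapsto s_{\SET X=\SET x}$ onto $(T_{\SET X=\SET x})^-$, since this is exactly what delivers the ``cover'' hypothesis of the disjunction clause when a partition is pushed through an intervention.
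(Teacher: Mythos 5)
Your proof is correct and follows essentially the same route as the paper: the paper disposes of the AND and NEG cases as "straightforward" and derives OR-OUT/OR-IN from Lemma \ref{INTSPLIT}, which is exactly the structure of your argument, only with the bookkeeping (the covering of supports, the surjectivity of $s\mapsto s_{\SET X=\SET x}$, and the singleton-to-singleton behaviour of interventions under unique solutions) written out explicitly. The only cosmetic point is that your final step in the NEG chain is just the semantic clause for dual negation rather than an appeal to flatness, but this does not affect the validity of the argument.
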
 

\begin{proof}
The AND and NOT case are straightforward. The soundness of OR-OUT/OR-IN follows immediately from Lemma \ref{INTSPLIT}.
\end{proof}


How should we treat counterfactuals occurring in the consequent? The importation rule presented in previous sections (Theorem \ref{FULLIMPEXP}) can eliminate them only under some synctactical restrictions. We need a different inference rule that may work without restrictions. We prove here that, if two consecutive interventions affect some common set of variables, then the action performed by the first intervention on these common variables is completely overwritten by the second intervention.

\begin{lemma} \label{REWRITE}
Let $T$ be a recursive causal team, $\SET{X}\subseteq dom(T)$, and $\SET{x},\SET{x'}\in Ran(\SET X)$. Then $(T_{\SET{X} = \SET{x}})_{\SET{X} = \SET{x'}} = T_{\SET{X} = \SET{x'}}$.
\end{lemma}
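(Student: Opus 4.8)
The plan is to analyze the two $do$-algorithms step by step and exploit the fact that interventions act on each assignment separately, together with the fact that the graph component after either intervention is the same. First I would observe that both $(T_{\SET X = \SET x})_{\SET X = \SET x'}$ and $T_{\SET X = \SET x'}$ have the same graph, namely $G(T)_{-\SET X}$ (the arrows into $\SET X$ are removed by the first intervention in the left-hand side, and removing them again changes nothing), the same ranges $\mathcal R_T$, and the same functional component: on the left, $\mathcal F_T$ is first restricted to $\SET V \setminus \SET X$ and then restricted again to the same set, while on the right it is restricted once to $\SET V\setminus \SET X$. (Here I am using the explicitness/full-explicitness convention in force in this section, so that no functional information is carried by the team component.) Also, $d_{G(T)}(\SET X, \cdot)$ is the same quantity governing the number of stages in both algorithms, since it depends only on $G(T)_{-\SET X}$, which is common. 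Hence it suffices to prove that the two team components agree, and since every assignment is processed independently, it suffices to fix $s\in T^-$ and show that the assignment $s'$ produced by applying $do(\SET X=\SET x)$ then $do(\SET X=\SET x')$ to $\{s\}$ equals the assignment $s''$ produced by applying $do(\SET X=\SET x')$ to $\{s\}$.

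Next I would prove $s'(Z)=s''(Z)$ for all $Z\in dom(T)$ by induction on $d_{G(T)}(\SET X, Z)$. For the base case, I distinguish $Z\in\SET X$, where both algorithms set $Z$ to the corresponding component of $\SET x'$ (on the left, the first intervention sets it to the component of $\SET x$, but the second overwrites it with the component of $\SET x'$; this is exactly the "overwriting" phenomenon, and it is immediate from Stage $0$ of the second algorithm), and $Z\notin\SET X$ with $d_{G(T)}(\SET X,Z)=-1$, where neither algorithm ever touches the $Z$-column, so $s'(Z)=s(Z)=s''(Z)$. For the inductive step, take $Z\notin\SET X$ with $d_{G(T)}(\SET X,Z)=n+1\geq 0$. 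Then $Z\in\SET V\setminus\SET X$, so in both algorithms the final value of $Z$ is $f_Z$ applied to the (already updated) values of $PA_Z$; explicitly, $s'(Z)=\mathcal F_T(Z)(s'(PA_Z))$ and $s''(Z)=\mathcal F_T(Z)(s''(PA_Z))$, using that acyclicity guarantees the $PA_Z$-columns are not modified after the $Z$-column (the argument already used in the proof of Theorem \ref{IMPEXP}), and that $\mathcal F_{T_{\SET X=\SET x}}(Z)=\mathcal F_T(Z)$ by the explicitness remark above. By Lemma \ref{LEMPA}, every $W\in PA_Z$ has $d_{G(T)}(\SET X,W)<d_{G(T)}(\SET X,Z)$, so the inductive hypothesis gives $s'(W)=s''(W)$ for all $W\in PA_Z$, i.e. $s'(PA_Z)=s''(PA_Z)$, and applying $\mathcal F_T(Z)$ yields $s'(Z)=s''(Z)$.

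I expect the only subtle point — and the one worth stating carefully rather than the routine induction — is the base case $Z\in\SET X$, i.e. making precise that the second intervention genuinely overwrites the first on the shared variables. This is where one must be careful that Stage $0$ of $do(\SET X=\SET x')$, applied to $T_{\SET X=\SET x}$, replaces the entire $\SET X$-column by $\SET x'$ regardless of its current contents (so the values $\SET x$ installed by the first intervention simply disappear), and that the removal of arrows into $\SET X$ a second time is harmless. Everything else is the same acyclic-induction-with-Lemma-\ref{LEMPA} pattern as in Theorem \ref{IMPEXP}, and I would keep that part terse. One could alternatively derive the lemma more slickly from Theorem \ref{FULLIMPEXP} and Lemma \ref{REP}: taking $\SET Y=\SET X$ and the target values $\SET x'$, note $\SET Y\setminus\SET X=\emptyset$, so $(T_{\SET X=\SET x})_{\SET X=\SET x'}=(T_{\SET X=\SET x})_{\emptyset}$... but here the hypothesis "$x_i=y_j$ whenever $X_i=Y_j$" of Theorem \ref{FULLIMPEXP} fails (we precisely want $\SET x\neq\SET x'$ in general), so this shortcut is not available and the direct algorithmic argument above is the right route.
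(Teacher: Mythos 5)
Your proposal is correct and follows essentially the same route as the paper, which proves the lemma by noting that both interventions yield the same graph and then performing "a straightforward induction on $d(\SET{X},Y)$" assignment by assignment; you have simply written out the details of that induction (base case on $\SET X$ via overwriting at Stage $0$, inductive step via Lemma \ref{LEMPA}). Your closing remark that the shortcut through Theorem \ref{FULLIMPEXP} is unavailable is also accurate, since that theorem requires the two interventions to agree on shared variables.
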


\begin{proof}
Since all the interventions act on the same set of variables $\SET{X}$, they give rise to the same graph. A straightforward induction on $d(\SET{X},Y)$ can be used to show that, for any $s\in T$, if $\{t\}= (\{s\}_{\SET{X} = \SET{x}})_{\SET{X} = \SET{x'}}$ and $\{t'\} = \{s\}_{\SET{X} = \SET{x'}}$, then $t(Y) = t'(Y)$ for any $Y\in dom(T)$.  
\end{proof}

\begin{teo}
Let $T$ be a recursive causal team, $\SET{X},\SET{Y}\subseteq dom(T)$, $\SET{X'} = \SET{X}\setminus \SET{Y}$ and $\SET{x'} = \SET{x}\setminus \SET{y}$.  Then $(T_{\SET{X}=\SET{x}})_{\SET{Y}=\SET{y}} = (T_{\SET{X'}=\SET{x'}})_{\SET{Y}=\SET{y}}$.
\end{teo}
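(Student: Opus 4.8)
The plan is to adapt the proof of Lemma \ref{LEMSETDIFF}, the crucial difference being that here we are given \emph{no} compatibility hypothesis relating $\SET x$ and $\SET y$ on $\SET X\cap\SET Y$; I will compensate by invoking the overwriting Lemma \ref{REWRITE} in place of the idempotence Lemma \ref{REP}. First I would set $\SET Z:=\SET X\cap\SET Y$ and let $\SET z$ be the subsequence of $\SET x$ on the variables of $\SET Z$, so that $\SET X=\SET X'\cup\SET Z$ (disjoint union) and $do(\SET X=\SET x)$ is literally the same algorithm as $do(\SET X'=\SET x'\land\SET Z=\SET z)$. Since $\SET X'$ and $\SET Z$ are disjoint, Theorem \ref{IMPEXP} gives $T_{\SET X=\SET x}=(T_{\SET X'=\SET x'})_{\SET Z=\SET z}$, and hence
\[
(T_{\SET X=\SET x})_{\SET Y=\SET y}=\big((T_{\SET X'=\SET x'})_{\SET Z=\SET z}\big)_{\SET Y=\SET y}.
\]

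It then suffices to establish the auxiliary fact that, for every recursive causal team $S$ and all $\SET Z\subseteq\SET Y\subseteq dom(S)$, $\SET z\in Ran(\SET Z)$ and $\SET y\in Ran(\SET Y)$, one has $(S_{\SET Z=\SET z})_{\SET Y=\SET y}=S_{\SET Y=\SET y}$: applying this with $S:=T_{\SET X'=\SET x'}$ (which is again recursive, since an intervention only deletes arrows and hence cannot create a cycle) collapses the right-hand side above to $(T_{\SET X'=\SET x'})_{\SET Y=\SET y}$, which is exactly the claim. To prove the auxiliary fact I would write $\SET Y=\SET Z\cup\SET Y''$ with $\SET Y'':=\SET Y\setminus\SET Z$ disjoint from $\SET Z$, let $\SET y_{\SET Z}$ and $\SET y''$ be the corresponding subsequences of $\SET y$ (so that $do(\SET Y=\SET y)$ coincides with $do(\SET Z=\SET y_{\SET Z}\land\SET Y''=\SET y'')$), and then chain
\[
(S_{\SET Z=\SET z})_{\SET Y=\SET y}=\big((S_{\SET Z=\SET z})_{\SET Z=\SET y_{\SET Z}}\big)_{\SET Y''=\SET y''}=(S_{\SET Z=\SET y_{\SET Z}})_{\SET Y''=\SET y''}=S_{\SET Y=\SET y},
\]
where the two outer equalities are applications of Theorem \ref{IMPEXP} (legitimate since $\SET Z\cap\SET Y''=\emptyset$, after rewriting $do(\SET Y=\SET y)$ as above) and the middle one is Lemma \ref{REWRITE}.

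The main obstacle — in fact essentially the only nontrivial step — is the middle equality $(S_{\SET Z=\SET z})_{\SET Z=\SET y_{\SET Z}}=S_{\SET Z=\SET y_{\SET Z}}$, i.e. the statement that a second intervention on a set of variables completely erases the effect of a prior intervention on the same set; this is precisely Lemma \ref{REWRITE}, and it is exactly what allows us to dispense with a compatibility assumption between $\SET x$ and $\SET y$ on $\SET X\cap\SET Y$ (contrast Lemma \ref{LEMSETDIFF}, which gets away with the weaker Lemma \ref{REP} only because it assumes such compatibility). A secondary bookkeeping point is that each use of Theorem \ref{IMPEXP} requires the relevant graph to be finite and acyclic: acyclicity is preserved along interventions as noted, and finiteness is the standing assumption of this section; should one wish to avoid even that, the auxiliary fact can instead be proved directly by the same induction on $d(\SET Z,\cdot)$ employed in the proof of Lemma \ref{REWRITE}.
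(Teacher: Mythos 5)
Your proof is correct and follows essentially the same route as the paper's: decompose $\SET X$ and $\SET Y$ around $\SET Z=\SET X\cap\SET Y$, use Theorem \ref{IMPEXP} to isolate two consecutive interventions on $\SET Z$, collapse them with the overwriting Lemma \ref{REWRITE}, and reassemble with Theorem \ref{IMPEXP}. Packaging the middle steps as an auxiliary fact about $\SET Z\subseteq\SET Y$ is only a presentational difference; the chain of equalities is the same one the paper writes out directly.
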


\begin{proof}
Let $\SET{Y'} = \SET{Y}\setminus \SET{X}$ and $\SET{y'} = \SET{y}\setminus \SET{x}$.   Let $\SET{Z} = \SET{X}\cap \SET{Y}$. Suppose $\SET{Z}=(X_{i_1},\dots, X_{i_k}) = (Y_{j_1},\dots, Y_{j_k})$. Write $\SET{z_1}$ for $(x_{i_1},\dots, x_{i_k})$ (the restriction of $\SET x$ to values for $\SET Z$), and $\SET{z_2}$ for $(y_{j_1},\dots, y_{j_k})$ (the restriction of $\SET y$ to values for $\SET Z$). Then, by Theorem \ref{IMPEXP}, $(T_{\SET{X}=\SET{x}})_{\SET{Y}=\SET{y}} = (((T_{\SET{X'}=\SET{x'}})_{\SET{Z} = \SET{z_1}})_{\SET{Z} = \SET{z_2}})_{\SET{Y'}=\SET{y'}}$. By Lemma \ref{REWRITE}, this is equal to $((T_{\SET{X'}=\SET{x'}})_{\SET{Z} = \SET{z_2}})_{\SET{Y'}=\SET{y'}}$. Applying  Theorem \ref{IMPEXP} again, we obtain $(T_{\SET{X'}=\SET{x'}})_{\SET{Y}=\SET{y}}$.
\end{proof}

This result, combined with Theorem \ref{IMPEXP}, can be immediately turned into a pair of inference rules:
\[
(CF-OUT): \frac{\SET{X}=\SET{x} \cf (\SET{Y}=\SET{y} \cf \psi)}{(\SET{X'}=\SET{x'} \land \SET{Y}=\SET{y}) \cf \psi}
\]
where $\SET{X'} = \SET{X}\setminus \SET{Y}$ and $\SET{x'} = \SET{x}\setminus \SET{y}$.
Its inverse, CF-IN, works for $\SET{X'}\cap\SET{Y}=\emptyset$ and $\SET{X}\supseteq\SET{X'}$. This pair of rules corresponds essentially to the axiom CM11 proposed by Briggs (\cite{Bri2012}, sect. 4).

Finally, we return to the axiom REC in Halpern (\cite{Hal2000}), which characterizes recursive systems.  Halpern shows how to represent, in the potential outcome formalism, the notion ``$X$ affects $Y$ under some intervention'' (we believe this notion is the same as Woodward's notion of \emph{contributing cause}). We were not able to express this notion in $C^{neg}$. Instead we can express it in an extended language $C^{neg}(\sqcup)$ where we allow positive occurrences of the so-called \emph{intuitionistic disjunction} $\sqcup$, which follows the semantical clause:
\[
T\models \psi \sqcup \psi' \iff T\models \psi \text{ or } T\models \psi'
\]
With this additional connective, ``$X$ affects $Y$'' can be written as follows; write $ND_X$ for the set of nondescendants of $X$, and $\mathcal W:= \{\SET Z\subset dom(T)\setminus \{X,Y\} | ND_X \subseteq \SET Z\}$:
\[
CC(X,Y): \bigsqcup \big\{\SET Z = \SET z \cf [(X=x \cf Y=y) \land (X=x' \cf Y=y')]\big\}.
\]
where the disjunction is taken over all $\SET Z\in \mathcal W, \SET z\in Ran(\SET Z),x\neq x'\in Ran(X),y\neq y'\in Ran(Y)$. Then in $\C^{neg}(\sqcup)$ we can write:
\[
(REC_k): \frac{CC(X_1,X_2) \hspace{10pt} \dots \hspace{10pt} CC(X_{n-1}, X_n)}{\neg CC(X_n,X_1)} 
\] 
and take REC to be the set of all the REC$_k$ rules, for $k\in \mathbb{N}$, $k\geq 2$.

Given a causal team $T$, let $AX_\C[T]$\footnote{The extra parameter $T$ is there because we need, as was done in \cite{Hal2000}, to restrict the axioms to teams which share the same ``signature'' as $T$, that is, they have the same domain of variables and the same ranges for each of the variables.} be the axiom system constituted of axiom schemes EFF; DEC; 
 UNI;
 and the rules CE, CI, OR/AND/NEG/CF-OUT, OR/AND/NEG/CF-IN, MP$_\lor$ and REC. Now $AX_\C[T]$ is a set of valid $C^{neg}(\sqcup)$ formulas and sound rules for $C^{neg}(\sqcup)$, but we can show it to be complete for the restricted set of formulas $C^{neg}$:\footnote{The approach which consists of defining a formal system over a language more general than the set of target formulas is not unheard of; it is for example the approach taken in \cite{GalPea1998}.}

\begin{teo}
$AX_\C[T]$ is a sound and complete axiom system for the language $\C^{neg}$ on recursive causal teams of domain $dom(T)$ and range function $\mathcal R_T$.
\end{teo}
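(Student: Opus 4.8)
The plan is to split the proof in the usual way. Soundness is almost entirely a matter of collecting facts already established for the individual ingredients of $AX_\C[T]$; the only genuinely new point is the soundness of the recursiveness rules $REC_k$. Completeness will be obtained by a normal-form argument: I will show that every formula of $\C^{neg}$ is provably equivalent, in $AX_\C[T]$, to a formula of the embedded-counterfactual-free fragment $\C_u$, and then invoke Halpern's completeness theorem for $L_{uniq}\cong\C_u$ — transported from causal models to singleton causal teams via the flatness theorem — together with the observation that $AX_\C[T]$ contains Halpern's system on $\C_u$. Throughout, ``causal team'' means parametric recursive causal team of signature $(dom(T),\mathcal R_T)$, the standing assumption of the section (in the nonparametric case one would use the $Ran(X)\cup Term_{G(T)}$ variant of DEC).

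For soundness: EFF and the relevant finite-range instances of DEC hold by flatness, as already noted; $MP_\lor$ is sound since, if $T\models\theta$ with $\theta$ flat, then in any witnessing partition $T^-=S_1\cup S_2$ for $\neg\theta\lor\chi$ the part $S_1$ is forced to be empty, so $T\models\chi$ by downward closure; CE, CI and UNI are sound by the theorem established above; and OR/AND/NEG-OUT with their inverses, and CF-OUT/CF-IN, are sound by the theorems on those rules and by the intervention identities $(T_{\SET X=\SET x})_{\SET Y=\SET y}=(T_{\SET X'=\SET x'})_{\SET Y=\SET y}$ and Theorem \ref{IMPEXP}. For $REC_k$, note that $CC(X,Y)$ can hold in a causal team only if $Y$ is a proper descendant of $X$ in $G(T)$: every admissible $\SET Z$ in the $\sqcup$ must contain $ND_X$ while omitting $Y$, which is impossible unless $Y\notin ND_X$. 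Hence the premises $CC(X_1,X_2),\dots,CC(X_{n-1},X_n)$ force a directed path $X_1\leadsto X_n$; acyclicity of $G(T)$ then precludes a path $X_n\leadsto X_1$, so $X_1\in ND_{X_n}$, the $\sqcup$ defining $CC(X_n,X_1)$ has no admissible disjunct, and $\neg CC(X_n,X_1)$ holds in $T$.

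For completeness the central lemma is: every $\varphi\in\C^{neg}$ is $AX_\C[T]$-provably equivalent to some $\varphi^\ast\in\C_u$. I would prove this by induction on the nesting depth of $\cf$, with an inner induction on the size of consequents. Pick an innermost counterfactual $\SET X=\SET x\cf\psi$ whose consequent still contains a $\cf$; using OR-OUT, AND-OUT, NEG-OUT and their inverses one distributes the outermost connective of $\psi$ over the counterfactual, repeating (each step strictly shrinking the consequent) until $\psi$ is either a literal — then this counterfactual is already in $\C_u$, after moving $\neq$ outward with NEG-OUT and using $UNI$, $DEC$ to reconcile $Y\neq y$ with $\neg(Y=y)$ — or itself a counterfactual $\SET Y=\SET y\cf\chi$; in the latter case CF-OUT collapses the pair into $(\SET X'=\SET x'\land\SET Y=\SET y)\cf\chi$ with $\SET X'=\SET X\setminus\SET Y$, strictly decreasing the nesting depth, and CF-IN reverses it. Since $\C^{neg}$ has full propositional connectives and $AX_\C[T]$ contains propositional logic with $MP_\lor$, the OUT/IN rules yield a congruence principle for $\cf$ and replacement of provable equivalents, so these steps may be performed on subformulas in arbitrary context. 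Iterating drives $\varphi$ down to a boolean combination of flat counterfactuals, i.e.\ to $\varphi^\ast\in\C_u$.

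It then remains to derive $\varphi$. By soundness of the OUT/IN rules in both directions, $\varphi$ is valid over (recursive, parametric) causal teams of the given signature iff $\varphi^\ast$ is. By Theorem \ref{TEOFLAT} (whose flatness passes to the sublanguage $\C_u$), a $\C_u$ formula is valid over such causal teams iff it holds in every singleton one; and a singleton recursive parametric causal team of this signature is exactly a recursive causal model of that signature together with a context $u$, the $do$-algorithm on such a team coinciding with intervention in the causal model. Hence $\varphi^\ast$ is valid over recursive causal teams iff it is valid over recursive causal models of that signature, which by Halpern's completeness theorem for $L_{uniq}$ means $\varphi^\ast$ is derivable in Halpern's system; and that system (propositional logic and MP, effectiveness, the finite-range instances of definiteness, the equality/uniqueness scheme, composition $=$ CE $+$ CI, and the recursiveness axiom) is contained in $AX_\C[T]$ restricted to $\C_u$. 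Thus $AX_\C[T]\vdash\varphi^\ast$, and the IN rules from the reduction lemma give $AX_\C[T]\vdash\varphi$. I expect the main obstacle to lie here: on the syntactic side, upgrading the semantic intervention identities to provable equivalences that can be substituted under the scope of further counterfactuals (the congruence argument for $\cf$, plus the $Y\neq y$ versus $\neg(Y=y)$ bookkeeping); and on the import side, matching our rule-form recursiveness axiom $REC$ — phrased with the intuitionistic disjunction $\sqcup$ and the predicate $CC(X,Y)$ — against the recursiveness axiom actually used in Halpern's completeness proof, so that his theorem may legitimately be applied.
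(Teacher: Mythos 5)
Your proof follows essentially the same route as the paper's: reduce every $\C^{neg}$ formula to a provably equivalent $\C_u$ formula via repeated applications of the OR/AND/NEG/CF-OUT rules and their inverses, then transfer Halpern's completeness theorem for $L_{uniq}$ back through the IN rules (the paper phrases this for entailment from a set $\Gamma$ rather than plain validity, but the argument is identical). You in fact supply several details the paper leaves implicit --- the soundness of the $REC_k$ rules, the identification of singleton causal teams with causal models via flatness, and the need for a replacement/congruence principle in order to apply the OUT/IN rules inside contexts --- so the proposal is, if anything, more careful than the published proof.
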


\begin{proof}
Soundness in $\C^{neg}$ was shown above; it is not difficult to show that soundness extends to $C^{neg}(\sqcup)$. Let $\Gamma\subseteq \C^{neg}$, $\varphi\in\C^{neg}$ be such that $\varphi$ holds in any recursive causal team on which $\Gamma$ holds. Notice that any $\psi\in \Gamma$ can be transformed into a $\psi'\in \C_u$ by repeated applications of OR/AND/NEG/CF-OUT; call $\Gamma'$ the set of the $\psi'$. By the soundness and invertibility of the rules, $\Gamma$ and $\Gamma'$ hold in the same causal teams. Similarly, $\varphi$ can be transformed into a $\varphi'\in \C_u$ such that $\varphi$ and $\varphi'$ hold in the same causal teams. Therefore, in order to prove completeness, we only have to show that, whenever $\Gamma'\vdash \varphi'$ according to the (translation of the) axiom system of Halpern, we can construct a derivation $\Gamma \vdash_{AX_\C[T]}\varphi$.

For suppose we have a derivation $\Gamma'\vdash \varphi'$ in Halpern's axiom system. Then we can compose it with a derivation of $\Gamma'$ from $\Gamma$ (using the OUT rules) and a derivation of $\varphi$ from $\varphi'$ (using the IN rules) to obtain a derivation of $\varphi$ from $\Gamma$  within $AX_\C[T]$ (after Halpern's rules and axioms are replaced with our translations).
\end{proof}

 
The logic $\CO^{neg}$ is, again, flat and closed under dual negation. The obvious way to obtain a complete axiomatization for $\CO^{neg}$ is then to add to $AX_\C[T]$ the conversion rule
\[
(SEL_E): \frac{\theta \supset \chi}{\neg \theta\lor\chi}
\]
and its inverse $SEL_I$, which allow to reduce the problem to that of completeness of $\C^{neg}$.

Notice that SEL$_E$ is valid also for non-flat logics, of course under the restriction that $\theta$ be flat (and its negation be present in the language); the validity of SEL$-I$, instead, requires downward closure of the logic.

An alternative approach to obtain a complete axiomatization of $\CO^{neg}$ is to add a rule for the extraction of selective implications from counterfactual consequents:
\[
(SEL-OUT): \frac{\SET X = \SET x\cf(\psi \supset \chi)}{(\SET X = \SET x\cf\psi) \supset (\SET X = \SET x \cf\chi)}
\]
and its inverse SEL-IN. These rules are sound for $\CO^{neg}$ and are both likely to hold (for the appropriate instances) also in languages that are not downward closed.

\section{The nonparametric case: falsifiability and admissibility} \label{NONPARAM}

We have allowed, in our causal teams, the possibility of having incomplete information concerning the invariant functions that relate the variables, up to the extreme case where no information at all is present (except for the set of arguments of the function, and the range of its allowed values). As a consequence, it is in general impossible to fill the counterfactual causal teams generated by interventions with proper values; if we do not have sufficient information to fill an entry with a value, we fill it instead with a formal term which describes the functional dependence of this entry on the values of other variables.

But how should we evaluate counterfactual statements which involve variables whose columns cannot be filled with proper values? We think it should not be possible to ascribe truth values to such statements by the usual clauses. 
 For example, we are not entitled to assert that $Y=3$ in a team whose non-formal entries for $Y$ are all equal to $3$.

Yet in some cases we might be able to observe the falsity of similar statements; therefore, to state their contradictory negation. Let us write $\downarrow s(X)$ to signify that $s(X)$ is a proper value and not a formal term. Let $T$ be a causal team, possibly having some formal entries. We read $T\models^f \psi$ as ``$\psi$ is falsifiable in $T$'', but notice that this is not falsifiability in the dual sense; rather, it is a partial notion of contradictory negation. We propose the following semantical clauses:
\begin{itemize}
\item $T\models^f X=x \text{ if there is } s\in T^- \text{ such that } \downarrow s(X) \text{ and } s(X) \neq x$
\item $T\models ^f X\neq x \text{ if there is } s\in T^- \text{ such that } \downarrow s(X) \text{ and } s(X) = x$
\item $T\models ^f \dep{\SET X}{Y} \text{ if there are } s,s'\in T^- \text{ such that } \downarrow s(Y),\downarrow s'(Y), s(\SET X) = s'(\SET X)  \text{ and } s(Y) \neq s'(Y)$ (notice that the $s(\SET X)$ need not all be proper values) 
\item $T\models^f \psi\land \chi$ if $T\models^f \psi$ or $T\models^f \chi$ 
\item $T\models^f \psi\lor \chi$ if for all subteams $T_1,T_2$ of $T$ with $T_1^-\cup T_2^-=T^-$, we have $T_1 \models^f \psi$ or $T_2\models^f \chi$. 
\end{itemize}
(Notice that the clause for disjunction involves universal quantification over teams).

Can we extend these clauses also to selective and counterfactual implication? Let us consider as an example the team

\begin{center}
\begin{tabular}{|c|c|}
\hline
 \multicolumn{2}{|c|}{$\hspace{-5pt}X \ \rightarrow  Y$} \\
\hline
 2 & 1 \\
\hline
 1 & $\hat f_Y(1)$ \\
\hline
\end{tabular}
\end{center}

It seems unreasonable to assert that this team falsifies the formula $Y=1 \supset X=2$, because, as long as we do not know the function $f_Y$, we cannot know whether $\hat f_Y(1)$ is meant to denote $1$ or some other number; therefore, we do not know whether the second assignment is compatible or not with our selection (if it were, then the formula would be falsified, otherwise it would not be). We now give a clause that takes into account this kind of problem.

 First, let $\psi$ be a classical formula; let $\SET{V}$ be the set of variables occurring in $\psi$; define $T^\psi_*:=T^\psi \cup \{s\in T^- | \not\downarrow s(V) \text{ for some } V\in\SET{V}\}$. Then, for $\chi$ \emph{downward closed} formula (as we know formulas of $\CD$ to be), a somewhat reasonable clause for selective implication seems to be:
\begin{itemize}
\item $T\models^f \psi \supset \chi$ if $T^\psi_* \models^f \chi$
\end{itemize}
This clause is not completely satisfying, because there might be variables in $\SET V$ that are irrelevant for determining the truth value of $\psi$.

For counterfactuals, the natural clause is more immediate:
\begin{itemize}
\item $T\models^f \SET X = \SET x \cf \chi$ if $T_{\SET X = \SET x} \models^f \chi$.
\end{itemize}

Thirdly, notice that, even though we cannot most often decide of the truth of a statement when columns are incomplete, we might however be interested in asserting that some proposition is \emph{admissible} in the given team, that is, consistent with the data we possess. The following seem to be reasonable clauses for the atomic formulas: 

\begin{itemize}
 \item $T\models^a X=x \text{ if for all } s\in T^- \text{ such that } \downarrow s(X), s(X) = x$
 \item $T\models^a X\neq x \text{ if for all } s\in T^- \text{ such that } \downarrow s(X), s(X) \neq x$
\item $T\models^a \dep{\SET X}{Y} \text{ if for all } s,s'\in T^- \text{ such that }  \downarrow s(Y) \downarrow s'(Y), s(\SET X) = s'(\SET X)$, we have $s(Y) =s'(Y)$. 
\end{itemize}

Finding general clauses for composite formulas appears to be a difficult task; for simplicity, we define this notion only for classical formulas in disjunctive normal form; we also require, in each conjunctive clause, that atomic formulas $P^i_j$ contain distinct variables.

\begin{itemize}
\item $T\models^a \bigvee_{i=1..m} \bigwedge_{j=1..n(i)} P^i_j$ ($P^i_j$ being of the form $X^i_j = x^i_j$ or $X^i_j \neq x^i_j$) if there are subteams $T_i$ of $T$, for $i=1..m$, such that 
		\begin{enumerate}
		\item	$T_i \models^a  P^i_j$, for all $j$. 
		\item for each $j,j'=1..n(i)$, if $j\neq j'$, $P^i_j$ is $X^i_j = a$ and $P^i_{j'}$ is $X^i_{j'} = b$ (with $a\neq b$), then for all $s\in T_i$, $s(X^i_j) \neq s(X^i_{j'})$.
		\item for each $j,j'=1..n(i)$, if $j\neq j'$, $P^i_j$ is $X^i_j = a$ and $P^i_{j'}$ is $X^i_{j'} \neq a$, then for all $s\in T_i$, $s(X^i_j) \neq s(X^i_{j'})$.
\end{enumerate}	
	
\end{itemize}

The clauses 2. and 3. above refer to formal inequality between terms. To have an idea of the intuition behind clause 2., the reader may think, for example, of the problem of checking the admissibility of $X=1 \land Y=2$; imagine that there is a row in which both the $X$-column and the $Y$-columm contain the formal term $f(3, g(2))$; then, surely, the formula is not admissible (for $X=1 \land Y=2$ to hold in the team, it is necessary that the $X$ and $Y$-column differ on each row).

An intuition for clause 3. can be similarly obtained by thinking of checking the admissibility of $X=1\land Y\neq 1$; again, for admissibility, we need the term $f(3, g(2))$ not to occur both in the $X$ and $Y$ column.

\section{Probabilistic semantics} \label{PROBSEC}

\subsection{A probabilistic language}

In Woodward's approach, probabilistic notions of causality are considered next to the deterministic ones; for example, a variable $X$ is said to be a \emph{total cause} (in the probabilistic sense) of a different variable $Y$ if there is an intervention $do(X=x)$ which changes the probability of $Y = y$ for some value of $y$. 

In Pearl (\cite{Pea2000}), and Spirtes, Glymour and Scheines (\cite{SpiGlySch1993}), one finds a representation of causal and counterfactual reasoning in a mixed framework which
combines a deterministic component (functional equations) with a probabilistic
one (probability distributions over exogenous variables). Such structures, which
have been widely studied in the literature, induce causal Bayesian networks, as
shortly mentioned in the introduction.

In the context of team semantics, probabilities have been recently introduced via the notion of multiteam. A multiteam differs from a team in that it may feature multiple copies of the same assignment; it is therefore closer to a collection of experimental data than teams are. There have been at least two different approaches to the formalization of multiteams in the literature (\cite{Vaa2017}, \cite{DurHanKonMeiVir2016}). However, the approach we take in the following is that of simulating multiteams by means of teams; this can be easily accomplished by assuming that each team has an extra variable Key (which is never mentioned in our formal languages) which takes distinct values on distinct assignments of the same causal team. In this way, we can have two assignments that agree on all the significant variables and just differ on Key. With this assumption in mind, the definition of \emph{causal multiteam} can follow word by word the definition of causal team.




We will now work towards the definition of languages that can be appropriate for the discussion and study of probabilistic notions as interpreted over causal multiteams. If we wish to talk about probabilities, it seems natural to add appropriate atomic formulas to our formal language. In theory, it would make sense to assign probabilities to any \emph{flat} formula of the resulting language. However, for technical simplicity, we only allow our formal language to talk of probabilities of $\CO$ formulas (which were proved to be flat in earlier sections).

\begin{df}
The set of probabilistic literals is given by:
\[
\neg \alpha | Pr(\chi) \leq \epsilon | Pr(\chi) \geq \epsilon | Pr(\chi) \leq Pr(\theta) |  Pr(\chi) \geq Pr(\theta)
\]
where $\alpha$ is a probabilistic literal, $\chi,\theta$ are formulas of $\CO$ and $\epsilon \in \mathbb{R}\cap [0,1]$. Literals and probabilistic literals without negation will be called atomic formulas.

The \emph{(basic) probabilistic causal language} ($\PCD$) is given by the following clauses:
\[
\alpha | \psi \land \chi | \psi \lor \chi | \psi \sqcup \chi | \theta\supset \psi | \SET X = \SET x\cf \psi
\]
where $\alpha$ is a literal or probabilistic literal, $\psi,\chi$ are $\PCD$ formulas, and $\theta$ a boolean combination of (non probabilistic) atoms.
\end{df}

As in the non-probabilistic case, we also define restricted languages:
\begin{itemize}
\item $\PC$ if selective implication is not allowed

\item $\PO$ if counterfactuals are not allowed

\item $\PP$ if neither selective implications nor counterfactuals are allowed.
\end{itemize}

Some words are needed to motivate our choice of languages and their restrictions. First of all, notice that, besides the usual (tensor) disjunction $\lor$ that most naturally arises in the context of logics of dependence, we also add to our probabilistic languages what is called in the literature \emph{intuitionistic disjunction}; that is, a binary connective $\sqcup$ which satisfies the following semantical clause (already introduced in an earlier section):
\begin{itemize}
\item $T\models \psi \sqcup \chi \iff T\models \psi$ or $T\models \chi$.
\end{itemize}
If one wants to express that the disjunctive event ``$X=x$ or $Y=y$'' has probability $\epsilon$, it can be done using the formula $Pr(X=x \lor Y=y)=\epsilon$ (the semantics of probabilistic atoms will be defined soon below). If one wishes to make disjunctive statements about probabilities, the correct operator is instead the intuitionistic disjunction. For example, the statement ``either $X=x$ has probability less than one third, or probability greater than two thirds'' will be expressed as $Pr(X=x) <1/3 \sqcup Pr(X=x) > 2/3$. Intuitionistic disjunction turns out to be particularly useful for defining other operators. For example, dependence atoms and conditional independence atoms turn out to be  (model-theoretically) \emph{definable} already in $\PO$ (this will be shown later on in the section). This is the reason why we have omitted such atoms from the syntax. Furthermore, we could not come up with any reasonable semantics for selective implications with probabilistic antecedents, and so we also omitted such constructs from the syntax.

We will use some obvious abbreviations, such as $Pr(\chi) = \epsilon$ for $Pr(\chi) \leq \epsilon \land Pr(\chi) \geq \epsilon$, or $Pr(\chi) < \epsilon$ for $Pr(\chi) \leq \epsilon \land \neg Pr(\chi) \geq \epsilon$.

The semantic clauses are the same as for the language $\CD$, enriched with clauses for the probabilistic atoms. In the following, we will often abuse notation and write $\{s\}$ for a \emph{causal} team whose underlying team contains exactly one assignment. Given any $\CO$ formula\footnote{This definition applies as well to any flat formula.} $\chi$ and any team $T$ with \emph{finite, nonempty} support $T^-$, we can define the \textbf{probability of $\chi$ in $T$} as:
\[
Pr_T(\chi):= \frac{card(\{s\in T^- | \{s\}\models \chi\})}{card(T^-)}.
\]
For $T^-=\emptyset$, we conventionally assume $Pr_T(\chi)$ to be undefined; and consequently, expressions like $Pr_T(\chi)\leq \epsilon$, etc. to be false.

Then the clauses for probabilistic atoms follow quite smoothly:
\begin{itemize}
\item $T\models Pr(\chi)\leq \epsilon  \text{ iff } Pr_T(\chi)\leq \epsilon$
\item $T\models Pr(\chi)\geq \epsilon  \text{ iff } Pr_T(\chi)\geq \epsilon$
\item $T\models Pr(\chi)\leq Pr(\theta) \text{ iff } Pr_T(\chi)\leq Pr_T(\theta)$
\item $T\models Pr(\chi)\geq Pr(\theta)  \text{ iff } Pr_T(\chi)\geq Pr_T(\theta)$
\end{itemize}
It is easy to see that the resulting logic is \emph{not} downward closed. For example, a team which is such that less than half of its assignments satisfy $\chi$ will satisfy $Pr(\chi)\leq \frac{1}{2}$. But the subteam $T^\chi$ consisting \emph{only} of the assignments that satisfy $\chi$ will not satisfy $Pr(\chi)\leq \frac{1}{2}$. 
Notice also that the atoms of the form $Pr(\chi)\leq Pr(\theta)$ (and $Pr(\chi)\geq Pr(\theta)$) could be eliminated if we allowed quantification over real values: we could then replace them with $\exists \epsilon(Pr(\chi)\leq \epsilon \land Pr(\theta)\geq \epsilon)$.

A crucial question which arises right away is: does our definition really define a probability distribution on a team $T$ of finite support? 
Let $\mathcal{E}_T$ be the set of all subsets $S$ of $T$ which are definable by a $\CO$ formula $\chi$ (that is, $S= \{s\in T^- | \{s\}\models \chi\}$). 
This will be our set of \emph{events}. For any event $S\in \mathcal{E}_T$, define $P_T(S):= Pr_T(\psi)$, where $\psi$ is any formula which defines $S$.

Now, a nice thing about $\CO$ formulas is that one can explicitly find (within $\CO$ itself) their complementary negations (a formula is a complementary negation of $\psi$ if it is satisfied exactly by those singleton causal teams that do not satisfy $\psi$). We define a canonical complementary negation $\psi^c$ for each formula $\psi$ of $\CO$:
\begin{itemize}
\item $(X = x)^c$ is $X \neq x$ (for any variable $X$ and value $x$)
\item $(X \neq x)^c$ is $X = x$ (for any variable $X$ and value $x$)
\item $(\theta \land \chi)^c$ is $(\theta^c \lor\chi^c)$
\item $(\theta \lor \chi)^c$ is $(\theta^c \land \chi^c)$
\item $(\theta\supset\chi)^c$ is $\theta \land \chi^c$
\item $(\theta\cf\chi)^c$ is $\theta\cf\chi^c$
\end{itemize} 

Complementary negations of $\CO$ formulas may fail to be boolean formulas, but they turn out to be useful because of the following:

\begin{lem}
Let $T$ be a causal multiteam, $\psi\in\CO$, and $S = \{s\in T^- | \{s\}\models \psi\}$. Then $T^- \setminus S = \{s\in T^- | \{s\}\models \psi^c\}$.
\end{lem}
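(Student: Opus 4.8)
The plan is to strengthen the statement into a pointwise claim about singleton causal sub-multiteams and prove it by induction on the syntactic structure of $\psi\in\CO$: for every $s\in T^-$, the singleton causal team $\{s\}$ satisfies $\{s\}\models\psi^c$ if and only if $\{s\}\not\models\psi$. Granting this, the Lemma is immediate, since then $s\in T^-\setminus S$ iff $\{s\}\not\models\psi$ iff $\{s\}\models\psi^c$, i.e. $T^-\setminus S=\{s\in T^-\mid\{s\}\models\psi^c\}$. Three ingredients are used repeatedly. First, an easy parallel induction on the clauses defining $(\cdot)^c$ shows that $\psi^c\in\CO$ whenever $\psi\in\CO$; in particular $\psi^c\in\CD$, so by the empty set property theorem we have $\emptyset\models\psi^c$. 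Second, the only causal subteams of a singleton causal team $\{s\}$ are $\emptyset$ and $\{s\}$ itself, so, combined with the empty set property, the tensor disjunction collapses on singletons: $\{s\}\models\alpha\lor\beta$ iff $\{s\}\models\alpha$ or $\{s\}\models\beta$. Third, we work (as throughout this section) in the recursive setting, so the $do(\SET X=\SET x)$ algorithm applied to $\{s\}$ again produces a singleton causal team, exactly as observed in the proof of Theorem \ref{TEOFLAT}.

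For the base cases $\psi=X=x$ and $\psi=X\neq x$ the claim holds because exactly one of $s(X)=x$, $s(X)\neq x$ is true. For $\psi=\theta\land\chi$, where $\psi^c=\theta^c\lor\chi^c$: by the singleton-disjunction fact and the inductive hypothesis, $\{s\}\models\psi^c$ iff $\{s\}\models\theta^c$ or $\{s\}\models\chi^c$ iff $\{s\}\not\models\theta$ or $\{s\}\not\models\chi$ iff $\{s\}\not\models\theta\land\chi$; the case $\psi=\theta\lor\chi$ is dual. For $\psi=\theta\supset\chi$ with $\theta$ classical and $\psi^c=\theta\land\chi^c$, I would distinguish whether $\{s\}\models\theta$: if not, then $\{s\}^\theta=\emptyset\models\chi$, so $\{s\}\models\psi$, while $\{s\}\not\models\psi^c$ because the conjunct $\theta$ already fails; if $\{s\}\models\theta$, then $\{s\}^\theta=\{s\}$, so $\{s\}\models\psi$ iff $\{s\}\models\chi$, whereas $\{s\}\models\psi^c$ iff $\{s\}\models\chi^c$ iff (inductive hypothesis) $\{s\}\not\models\chi$ — in either subcase $\psi$ and $\psi^c$ are complementary at $\{s\}$. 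For $\psi=\SET X=\SET x\cf\chi$ with $\psi^c=\SET X=\SET x\cf\chi^c$, let $\{t\}:=\{s\}_{\SET X=\SET x}$ (a singleton, by the third ingredient above); then $\{s\}\models\psi$ iff $\{t\}\models\chi$, and $\{s\}\models\psi^c$ iff $\{t\}\models\chi^c$ iff (inductive hypothesis) $\{t\}\not\models\chi$, which is the desired complementarity.

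The only delicate point — and the nearest thing to an obstacle — is the counterfactual case: the argument presupposes that the antecedent $\SET X=\SET x$ is a consistent conjunction, so that $do(\SET X=\SET x)$ is genuinely defined and the convention declaring counterfactuals with inconsistent antecedents vacuously true is never triggered; were it triggered, both $\SET X=\SET x\cf\chi$ and $\SET X=\SET x\cf\chi^c$ would be vacuously true on $\{s\}$ and complementarity would fail. This situation does not arise in the syntax of $\CO$, where in $\SET X=\SET x\cf\chi$ the symbol $\SET X$ denotes a genuine set of variables (one value per variable). I would also record explicitly that the statement is to be read over recursive (or, more generally, unique-solution) causal multiteams, since recursiveness is precisely what guarantees that $\{s\}_{\SET X=\SET x}$ is a singleton; this is consistent with the standing assumptions of the probabilistic section. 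Everything else is a routine unwinding of the semantic clauses.
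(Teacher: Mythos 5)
Your proof is correct and follows essentially the same route as the paper's: both reduce the lemma to the pointwise claim that $\{s\}\models\psi^c$ iff $\{s\}\not\models\psi$ and establish it by induction on the syntax of $\psi$, with the paper writing out only the $\supset$ and $\cf$ cases that you also handle in the same way. Your treatment is simply more explicit about the routine cases (the collapse of $\lor$ on singletons via the empty set property, and the recursiveness needed for $\{s\}_{\SET X=\SET x}$ to be a singleton), which the paper leaves implicit.
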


\begin{proof}
By induction on the synctactical complexity of $\psi\in \CO$. We consider the less obvious cases.

Let $\psi = \theta\supset \chi$. Then $\{s\}\models\psi^c$ iff $\{s\}\models \theta \land\chi^c$
iff (by inductive hypothesis) $\{s\}\models \theta$ and $\{s\}\not\models \chi$ iff $\{s\}\not\models \theta\supset \chi$.

Let $\psi = \theta\cf \chi$. Then $\{s\}\models \psi^c$ iff $\{s\}\models \theta\cf \chi^c$ iff $\{s\}_\theta\models \chi^c$ iff (by inductive hypothesis) $\{s\}_\theta\not\models \chi$ iff $\{s\}\not\models \theta\cf\chi$.
\end{proof}




\begin{teo}
Let $T$ be a finite causal multiteam. Then $(T,\mathcal{E}_T,
 P_T)$ is a probability space.
\end{teo}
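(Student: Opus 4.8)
The plan is to recognize $P_T$ as (the restriction to $\mathcal{E}_T$ of) the normalized counting measure on the finite set $T^-$, so that the only substantive point is that $\mathcal{E}_T$ is an algebra of subsets of $T^-$; and, $T^-$ being finite, this makes it automatically a $\sigma$-algebra. Throughout I assume $T^-\neq\emptyset$, so that $Pr_T$ is defined (when $T^-=\emptyset$ the statement is to be read as vacuous, or simply excluded).

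First I would record that $P_T$ is well defined and is exactly the counting measure: if $\psi,\psi'\in\CO$ both define an event $S\in\mathcal{E}_T$, then $S=\{s\in T^-\mid\{s\}\models\psi\}=\{s\in T^-\mid\{s\}\models\psi'\}$, whence $Pr_T(\psi)=card(S)/card(T^-)=Pr_T(\psi')$; thus $P_T(S)=card(S)/card(T^-)$ unambiguously, and in particular $P_T(S)\ge 0$ for all $S\in\mathcal{E}_T$.

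The core of the argument is that the operators $\land$, $\lor$ and the complementary negation $(\cdot)^c$ act classically on singleton causal teams. For $\land$ this is immediate from the semantic clause. For $\lor$ I would use the empty-set property of $\CD$ (hence of $\CO$): any causal subteam of a singleton team $\{s\}$ has support $\emptyset$ or $\{s\}$, so the disjunction clause gives that $\{s\}\models\psi\lor\chi$ holds iff one of $(\{s\}\models\psi \text{ and } \emptyset\models\chi)$, $(\emptyset\models\psi \text{ and }\{s\}\models\chi)$, $(\{s\}\models\psi \text{ and }\{s\}\models\chi)$ holds; since $\emptyset$ satisfies every $\CD$-formula, this collapses to ``$\{s\}\models\psi$ or $\{s\}\models\chi$''. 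Together with the preceding lemma (for $\psi\in\CO$, $\{s\}\models\psi^c$ iff $\{s\}\not\models\psi$) this yields, writing $S_\varphi:=\{s\in T^-\mid\{s\}\models\varphi\}$: $S_{\psi\lor\chi}=S_\psi\cup S_\chi$, $S_{\psi\land\chi}=S_\psi\cap S_\chi$, and $S_{\psi^c}=T^-\setminus S_\psi$. Hence $\mathcal{E}_T$ contains $T^-$ (which, picking any $X\in dom(T)$ and $x\in T^-(X)$, is defined by $(X=x)\lor(X\neq x)$), is closed under complement ($\psi^c\in\CO$ when $\psi\in\CO$), and is closed under finite unions and intersections; so $\emptyset=T^-\setminus T^-\in\mathcal{E}_T$ as well, and $\mathcal{E}_T$ is an algebra, thus a $\sigma$-algebra by finiteness of $T^-$.

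It then remains to check that $P_T$ is a probability measure. Normalization is $P_T(T^-)=card(T^-)/card(T^-)=1$. For additivity, if $S_1,S_2\in\mathcal{E}_T$ are disjoint then $S_1\cup S_2\in\mathcal{E}_T$ by the above and $P_T(S_1\cup S_2)=card(S_1\cup S_2)/card(T^-)=(card(S_1)+card(S_2))/card(T^-)=P_T(S_1)+P_T(S_2)$; $\sigma$-additivity is immediate since any countable pairwise-disjoint subfamily of the finite collection $\mathcal{E}_T$ has only finitely many nonempty members. I expect the only point requiring genuine care to be the ``classical collapse'' of the team disjunction $\lor$ on singletons; once that, together with the given lemma on $(\cdot)^c$, is in hand, every closure property of $\mathcal{E}_T$ and every axiom of a probability space follows by elementary counting.
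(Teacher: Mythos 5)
Your proof is correct and follows essentially the same route as the paper: $T^-$ is defined by $(X=x)\lor(X\neq x)$, complementation is handled via the complementary-negation lemma, and normalization and finite additivity are checked by counting. The one place where you are more explicit than the paper -- verifying that the team disjunction $\lor$ collapses to classical disjunction on singleton teams (via the empty set property), so that $\psi_1\lor\psi_2$ really defines $S_1\cup S_2$ -- is a point the paper uses tacitly, and making it explicit is a small improvement rather than a different argument.
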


\begin{proof}
1) $T^-$ is definable by the formula $X=x \lor X\neq x$. Therefore $T^-\in \mathcal{E}_T$.

2) $\mathcal{E}$ is obviously closed under unions (due to finiteness).

3) We want to show that $\mathcal{E}$ is closed under complementation. Let $S\in\mathcal{E}$. Then $S$ is defined by some $\CO$ formula $\psi$. But then $T^-\setminus S$ is defined by the complementary formula $\psi^c$, therefore $T^-\setminus S \in \mathcal{E}$.




4) $Pr_T(T^-) =  1$.

5) Let $S_1,S_2 \in \mathcal{E}_T$, with $S_1 \cap S_2 = \emptyset$. $S_1$ and $S_2$ are defined by two $\CO$ formulas $\psi_1,\psi_2$, respectively. Then $S_1\cup S_2$ is defined by $\psi_1 \lor \psi_2$. So 
\[
P_T(S_1\cup S_2) = Pr_T(\psi_1 \lor \psi_2) = \frac{card(\{s\in T^- | \{s\}\models \psi_1\lor \psi_2\})}{card(T^-)} =
\]
\[
= \frac{card(\{s\in T^- | \{s\}\models \psi_1\}) + card(\{s\in T^- | \{s\}\models \psi_2\})}{card(T^-)} =
\]
\[
 =\frac{card(\{s\in T^- | \{s\}\models \psi_1\})}{card(T^-)} + \frac{card(\{s\in T^- | \{s\}\models \psi_2\})}{card(T^-)} =
\]
\[
 = Pr_T(\psi_1) + Pr_T(\psi_2) = P_T(S_1) + P_T(S_2);
\]
 the third equality holds due to the assumption $S_1 \cap S_2 = \emptyset$.
\end{proof}

This result allows us to use the usual rules of probability calculus.

\subsection{Comparison with causal models} \label{SUBSCOMP}

At this point we are in the position to notice a difference between our approach and the usual causal models. Recall that in the semi-deterministic approach of \cite{Pea2000} and \cite{SpiGlySch1993}, by a \emph{causal model} one usually understands (the terminology is by no means used consistently) a structural equation model enriched with a probabilistic distribution over the exogenous variables $\SET{U}$. If the causal model is recursive (i.e. the graph is acyclic), then each endogenous variable can be represented as a function of the exogenous variables (just consider the structural equation defining an endogenous variable, $V:=f_V(PA_V)$; iteratively replace each of the endogenous variables occurring in the right term of this expression with its structural equation until you obtain a function $f'_V(\SET{U})$). Given recursivity, the probability distribution over exogenous variables induces a joint probability distribution over \emph{all} the variables of the system given by the following equation:
\begin{equation}\label{EQEXTENDPROB}
P(X_1 = x_1\land\dots\land X_n = x_n) :=
\end{equation}
\[
 \sum_{\{(u_1,\dots,u_k)|\text{for all }i=1..n \text{ : } f'_{X_i}(u_1,\dots,u_k)=x_i\}} P(U_1=u_1\land\dots\land U_k=u_k)
\]
From this joint probability, one can obtain by marginalization the distributions of each set of variables. 

This approach fails, however, for non-recursive causal models, 
in case their system of structural equations has multiple solutions or no solutions at all for some configuration(s) of values of the exogenous variables. In this case some endogenous variables may fail to be functions of the exogenous variables (consider e.g. a system with variables $U,X,Y$ all having range $\{0,1\}$, and consisting of the equations $X:=Y$ and $Y:= X$, which induce the arrows $X\rightarrow Y$ and $Y\rightarrow X$; here $U$ is the only exogenous variable; given a fixed value $u$ for $U$, both triples $(u,0,0)$ and $(u,1,1)$ are solutions to the system; therefore, neither $X$ nor $Y$ can be determined as functions of $U$). 

Unlike the semi-deterministic approach of causal models, our definition of probabilities for causal teams applies to both recursive and non-recursive systems. This is due to the fact that we do not conflate in our framework two different accounts of exogeneity, as it is often the case in the literature. In the first account, exogenous variables are treated as quantities whose causal mechanisms are not further examined in the model. In the second account, exogenous variables are treated as \emph{unobserved} variables. The treatment of exogenous variables in our causal teams supports the first account: they have no structural equations, but the assignments of the causal teams encode quantitative information about them. To the extent to which causal teams stick to the first account, they are more general than causal models. (Of course, if we further extend causal teams to allow for unobserved variables, probabilities in non-recursive systems might fail again to be determined by the exogenous distribution; such a generalisation is beyond the scope of the present paper). 

We might ask at this point whether our probabilities extend correctly (``conservatively'') the probabilities typical of recursive causal models. That is, if we take the team-defined probability over the exogenous variables $\SET{U}$, and we define the joint probability by formula \eqref{EQEXTENDPROB}, do we obtain the team-defined joint probability? The answer is affirmative:

\begin{teo}
Let $T$ be a recursive causal multiteam with exogenous variables $\SET{U} = U_1,\dots, U_k$. For each $(u_1,\dots,u_k)\in \prod_{i=1..k}{Ran(U_i)}$, let $P(U_1=u_1\land\dots\land U_k=u_k):= Pr_T(U_1=u_1\land\dots\land U_k=u_k)$. Let $X_1,\dots,X_n$ be a list of all variables of $T$, and $P(X_1,\dots,X_n)$ be defined from $P(U_1=u_1\land\dots\land U_k=u_k)$ as in equation \eqref{EQEXTENDPROB}. Then $P(X_1=x_1,\dots,X_n=x_n) = Pr_T(X_1=x_1\land\dots\land X_n=x_n)$ for any tuple $(x_1,\dots,x_n) \in \prod_{X_i\in dom(T)}Ran(X_i)$.
\end{teo}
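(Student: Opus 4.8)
The plan is to unfold both sides of the desired equality and match them term by term. On the left, $P(X_1=x_1,\dots,X_n=x_n)$ is defined by equation \eqref{EQEXTENDPROB} as a sum, over all tuples $(u_1,\dots,u_k)\in\prod Ran(U_i)$ such that $f'_{X_i}(u_1,\dots,u_k)=x_i$ for all $i=1..n$, of the quantities $P(U_1=u_1\land\dots\land U_k=u_k)$, which by hypothesis equal $Pr_T(U_1=u_1\land\dots\land U_k=u_k)$. On the right, $Pr_T(X_1=x_1\land\dots\land X_n=x_n)$ is by definition $\frac{card(\{s\in T^-\mid \{s\}\models X_1=x_1\land\dots\land X_n=x_n\})}{card(T^-)}$. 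So the whole statement reduces to a counting identity relating assignments whose full profile of values is $(x_1,\dots,x_n)$ to the exogenous tuples $(u_1,\dots,u_k)$ that are ``compatible'' with that profile.

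The key structural fact I would invoke is recursivity: since $G(T)$ is acyclic, for each endogenous variable $V$ there is a well-defined function $f'_V$ of the exogenous variables alone, obtained by iterated substitution (as described just before equation \eqref{EQEXTENDPROB}), and — crucially — conditions a) and c) in the definition of causal team, together with parametricity/recursivity, force every assignment $s\in T^-$ to satisfy $s(V)=f'_V(s(\SET U))$ for each endogenous $V$ (and trivially $s(U_j)=s(U_j)$ for exogenous ones). Thus an assignment $s\in T^-$ is completely determined by its restriction $s\!\!\upharpoonright\!\!\SET U$ to the exogenous variables; I would state and prove this as a short preliminary lemma (or simply cite the remark in subsection \ref{SUBSDEFINT} that endogenous variables are functionally determined by the exogenous ones). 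Consequently the map $s\mapsto s\!\!\upharpoonright\!\!\SET U$ is injective on $T^-$, and $\{s\}\models X_1=x_1\land\dots\land X_n=x_n$ holds if and only if $s(U_j)=u_j$ for some tuple $(u_1,\dots,u_k)$ lying in the index set of the sum \eqref{EQEXTENDPROB}, i.e. one with $f'_{X_i}(u_1,\dots,u_k)=x_i$ for all $i$.

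With that lemma in hand I would finish by a direct computation: partition $\{s\in T^-\mid \{s\}\models X_1=x_1\land\dots\land X_n=x_n\}$ according to the value of $s\!\!\upharpoonright\!\!\SET U$; each block corresponds to exactly one compatible exogenous tuple $(u_1,\dots,u_k)$ and has cardinality $card(\{s\in T^-\mid s(\SET U)=(u_1,\dots,u_k)\})$, which divided by $card(T^-)$ is precisely $Pr_T(U_1=u_1\land\dots\land U_k=u_k)=P(U_1=u_1\land\dots\land U_k=u_k)$. Summing over the compatible tuples yields exactly the right-hand side of \eqref{EQEXTENDPROB}, hence $P(X_1=x_1,\dots,X_n=x_n)=Pr_T(X_1=x_1\land\dots\land X_n=x_n)$. (One should note that if some $x_i$ is not in the range of $f'_{X_i}$ the index set is empty and both sides are $0$; and the case $T^-=\emptyset$ is excluded or handled by the convention that $Pr_T$ is then undefined.)

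The main obstacle, such as it is, is the bookkeeping needed to justify that $f'_V$ is genuinely well-defined and that every assignment of $T^-$ respects it — i.e. making the ``iteratively replace endogenous variables by their equations'' construction precise and verifying it terminates and is consistent with $T^-$, which is exactly where acyclicity and clause c) of the causal team definition do the work. Once that preliminary lemma is nailed down, the rest is an essentially mechanical reindexing of a finite sum, so I would keep the write-up of that part brief.
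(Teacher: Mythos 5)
Your proof is correct and follows essentially the same route as the paper's: both use clause c) of the causal team definition to replace the condition $s(X_i)=x_i$ by $f'_{X_i}(s(\SET U))=x_i$ and then decompose the count of assignments according to the values of the exogenous variables. One minor remark: the injectivity of $s\mapsto s\!\upharpoonright\!\SET U$ that you assert is actually false for multiteams (two assignments may differ only on the hidden Key variable), but your argument never uses it --- the partition by exogenous profile suffices --- so this does not affect correctness.
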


\begin{proof}
By definition, 
\[
Pr_T(X_1=x_1 \land\dots\land X_n=x_n) = \frac{card(\{s\in T| s(X_1) = x_1 \land\dots\land s(X_n) = x_n\})}{card(T)}.
\]
 Clause c) in the definition of causal teams entails that this last term is equal to 
\[
\frac{card(\{s\in T | f'_{X_i}(s(U_1),\dots,s(U_k)) = x_i \text{ for } i=1..n\})}{card(T)}
\]
where the $f'_{X_i}(u_1,\dots,u_k)$ are defined as above. We can further decompose the numerator according to the values taken by $U_1,\dots,U_k$, which ultimately leads to
\[
\sum_{(u_1,\dots,u_k)|\text{ for all } i, f'_{X_i}(u_1,\dots,u_k)=x_i}\frac{card(\{s\in T | s(U_1)=u_1,\dots,s(U_k)) = u_k \})}{card(T)}
\] 
that is
\[
\sum_{(u_1,\dots,u_k)|\text{ for all } i, f'_{X_i}(u_1,\dots,u_k)=x_i}P(U_1=u_1 \land\dots\land U_k = u_k)
\] 
which is the definition of $P(X_1=x_1,\dots,X_n=x_n)$.
\end{proof}

\subsection{Definability in the probabilistic language} \label{SECDEFPROB}

Can we define conditional probabilities in our framework? Semantically, the obvious definition of a conditional probability over a team is (
assuming that $\chi_1,\chi_2$ are $\CO$ formulas, and $\chi_1$ is satisfied by at least one assignment of the team):
\[
Pr_T(\chi_2|\chi_1):= \frac{Pr_T(\chi_1\land\chi_2)}{Pr_T(\chi_1)}.
\]
It is more complicated to assess whether our current formal languages can talk about conditional probabilities. We can see that, in a number of circumstances, the selective implication can take the role of conditioning. For example, under the same assumptions as above, we can write $Pr(\chi_2|\chi_1)\leq \epsilon$ as an abbreviation for 
\[
\chi_1 \supset Pr(\chi_2)\leq \epsilon
\]
Here is the proof that the abbreviation has the intended meaning:
\[
T\models \chi_1 \supset Pr(\chi_2)\leq \epsilon \iff
\]
\[
\iff T^{\chi_1}\models Pr(\chi_2)\leq \epsilon
\]
\[
\iff \frac{card(\{s\in (T^{\chi_1})^- | \{s\}\models \chi_2\})}{card((T^{\chi_1})^-)}\leq \epsilon
\]
\[
\iff \frac{card(\{s\in (T^{\chi_1})^- | \{s\}\models \chi_2\})}{card(T^-)}\frac{card(T^-)}{card((T^{\chi_1})^-)} \leq \epsilon
\]
\[
\iff \frac{Pr_T(\chi_1 \land \chi_2)}{Pr_T(\chi_1)} \leq \epsilon \iff Pr_T(\chi_2|\chi_1) \leq \epsilon
\]
provided $Pr_T(\chi_1)> 0$. In case $Pr_T(\chi_1)= 0$, then the causal team is empty, so that our earlier conventions impose that 
 $T^{\chi_1}\not\models Pr(\chi_2|\chi_1)\leq \epsilon$. Therefore, in this case we treat conventionally the expression $Pr(\chi_2|\chi_1)\leq \epsilon$ as not being satisfied by $T$.
 Things work analogously for inequalities in the opposite direction, and one can similarly prove that $T\models \chi \supset Pr(\psi_1)\leq Pr(\psi_2)$ iff $Pr_T(\psi_1|\chi)\leq Pr_T(\psi_2|\chi)$.

We would like, however, to point out that our logical language is in some respect more general than other languages typically used in the causation literature (see e.g. \cite{Pea2000}, chap. 3). To give an example, Pearl would write $P(y|do(x),u)=\epsilon$, or $P(y|\hat x,u)=\epsilon$, to denote the probability, given the observation that $U$ is equal to $u$, of $Y$ being equal to $\epsilon$, in the model where $X$ has been fixed to $x$ by an intervention (see e.g. \cite{Pea2000} sec. 3.4). This is expressed in our notation by $X=x \cf (U=u \supset Pr(Y=y) = \epsilon)$. A straightforward computation shows that this expression is equivalent to the statement that $Pr_{T_{X=x}}(Y=y |U=u) = \epsilon$, that is, that the probability of $Y=y$ conditional on $U=u$ \emph{in the intervened team} is $\epsilon$. But in our language we also have available formulas like $U=u \supset (X=x \cf P(Y=y) = \epsilon)$.  This formula is not necessarily equivalent to the previous one.  This point can be seen by looking back at an example considered in subsection \ref{SUBSINTER} to show the non-commutativity of selective and counterfactual implications. The team $S$ in that example satisfies $Z=1 \cf (Y=3 \supset Pr(Y=3)=1)$ but does not satisfy $Y=3 \supset ( Z=1\cf Pr(Y=3)=1)$.  The difference between the two formulas is that in the second we consider the observational evidence that \emph{in the initial model}, $U$ has value $u$. We can explicitly show that the second formula expresses the fact that the probability of $Y=y$, after the intervention $do(X=x)$, is smaller than $\epsilon$, conditional on the evidence that $U=u$ \emph{in the pre-intervention team}.

The equivalence can be proved as follows:
\[
T\models U=u \supset (X=x \cf P(Y=y) = \epsilon) \iff
\]
\[
\iff T^{U=u} \models X=x \cf P(Y=y) = \epsilon
\]
\[
\iff (T^{U=u})_{X=x} \models P(Y=y) = \epsilon
\]
\[
\iff \frac{card(\{s\in ((T^{U=u})_{X=x})^-|\{s\}\models Y=y\})}{card(((T^{U=u})_{X=x})^-)} = \epsilon
\]
\[
\iff \frac{card(\{t\in (T^{U=u})^-|\{t\}_{X=x}\models Y=y\})}{card((T^{U=u})^-)} = \epsilon
\]
\[
\iff \frac{card(\{t\in (T^{U=u})^-|\{t\}_{X=x}\models Y=y\})}{card(T^-)}\cdot\frac{card(T^-)}{card((T^{U=u})^-)} = \epsilon
\]
\[
\iff \frac{Pr_T(U=u \land (X=x \cf Y=y))}{Pr_T(U=u)} = \epsilon
\]
\[
\iff Pr_T(X=x \cf Y=y|U=u) = \epsilon
\]
(the fourth equivalence makes an essential use of the fact that we are using multiteams, not teams). In Pearl's notation, there is always the implicit convention that what occurs as a conditioning variable (in this case $u$) has to be conditioned on in the post-intervention  model. This is also the reason why $u$ and $do(x)$ can occur in the same position in the notation, without any problems of non-commutativity arising. But this limitation seems problematic in the context of causal reasoning. We might want, for example, to discuss the effects of an intervention over a restricted population, and Pearl's notation does not seems to be of any help in this case. The same problem seems to afflict other notational representations of counterfactuals, such as the one coming from the potential outcome tradition (\cite{Ney1923}, \cite{Rub1974}, \cite{GalPea1998}). In that notational system we can write, say, $Y_x(u)=y$ to express that, under the observation that $U$ is $u$, fixing $X$ to $x$ forces $Y$ to take value $y$ ($U=u \supset (X=x \cf Y=y)$), but there seems to be no way to express the formula $X=x \cf (U=u \supset Y=y)$. Since exogenous variables are not indirectly affected by interventions, this is just a minor problem; but the limits of the notation immediately arise again if one discusses conditional interventions that involve some exogenous variable.

Let us now see what kind of (probabilistic) dependencies and independencies are definable in our framework. First of all, we can give a model-dependent definition of \emph{two $\CO$ formulas being probabilistically independent}: $\chi_1\pindep\chi_2$. To this purpose, notice that all probabilities expressible over a team $T$ are multiple of the minimum unit $u = 1/card(T^-)$. Thanks to this, probabilistic atoms of the form $Pr(\chi_2|\chi_1)\leq Pr(\theta)$ can be given a model-dependent definition within $\PO$ (under the assumption $Pr_T(\chi_1) > 0$):
\[
T\models Pr(\chi_2|\chi_1)\leq Pr(\theta) \iff 
\]
\[
\iff T \models \bigsqcup_{m=0..card(T^-)} [(\chi_1 \supset Pr(\chi_2)\leq mu) \land Pr(\theta)\geq mu)]
\]

 Assuming by default that, if either $\chi_1$ or $\chi_2$ has probability 0, then the two formulas are probabilistically independent, we can take  $\chi_1\pindep\chi_2$ as an abbreviation for $Pr(\chi_1) = 0 \sqcup Pr(\chi_2) = 0 \sqcup Pr(\chi_2|\chi_1) = Pr(\chi_2)$. More generally, we can talk of \textbf{conditional independence atoms} $\chi_1\pindep_{\chi_3}\chi_2$ as abbreviations for $Pr(\chi_1|\chi_3) = 0 \sqcup Pr(\chi_2|\chi_3) = 0 \sqcup Pr(\chi_2|\chi_1\land\chi_3) = Pr(\chi_2| \chi_3)$. 

It would be important to understand whether independence atoms $\chi_1\perp_{\chi_3}\chi_2$ (\cite{GraVaa2013}) are definable or not in this framework, or whether it would be worth to add them to the language. 
This would lead to a simple uniform definition of dependence atoms; but we do not know the answer to this question. However, we can see that it is possible to give a model-dependent definition (in $\PO$) of dependence atoms in terms of conditional independence atoms. Given a sequence $(x_1,\dots,x_n)$ of values (implicitly associated to variables $X_1,\dots,X_n \in dom(T)$),
 we say that $(x_1,\dots,x_n)$ \emph{occurs} in $T$ if there is an $s\in T$ such that $s(X_1)=x_1,\dots,s(X_n)=x_n$.

\begin{teo}
For any finite causal multiteam $T$, $T\models \dep{\SET X}{Y}$ if and only if 
\[
T\models \bigwedge_{(\SET x,y)\in Ran(\SET X) \times Ran(Y)} Y=y \hspace{5pt}\pindep_{\SET X = \SET x} Y=y. 
\]
\end{teo}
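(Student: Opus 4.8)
The plan is to reduce both sides of the biconditional to elementary combinatorial conditions on the assignments of $T^-$ and then compare them directly. For $\SET{x}\in Ran(\SET{X})$, write $T^-_{\SET{x}}$ for the set of assignments $s\in T^-$ with $s(\SET{X})=\SET{x}$.

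The first (and only delicate) step is to unwind the abbreviation $Y=y\pindep_{\SET{X}=\SET{x}}Y=y$. By the definition of conditional independence atoms in subsection \ref{SECDEFPROB}, $T\models Y=y\pindep_{\SET{X}=\SET{x}}Y=y$ holds iff at least one of its three disjuncts holds: $Pr(Y=y\mid \SET{X}=\SET{x})=0$, the same again, or $Pr(Y=y\mid Y=y\land \SET{X}=\SET{x})=Pr(Y=y\mid \SET{X}=\SET{x})$. Here one observes that, whenever it is defined, $Pr_T(Y=y\mid Y=y\land \SET{X}=\SET{x})=1$, so the third disjunct degenerates to $Pr_T(Y=y\mid \SET{X}=\SET{x})=1$, while the first is $Pr_T(Y=y\mid \SET{X}=\SET{x})=0$; and by the conventions on probability-zero conditioning the whole atom is (vacuously) satisfied when $T^-_{\SET{x}}=\emptyset$. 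Putting this together, I would record the characterization: $T\models Y=y\pindep_{\SET{X}=\SET{x}}Y=y$ iff $T^-_{\SET{x}}=\emptyset$, or every $s\in T^-_{\SET{x}}$ has $s(Y)=y$, or every $s\in T^-_{\SET{x}}$ has $s(Y)\neq y$ --- in other words, iff $T^-_{\SET{x}}$ is homogeneous with respect to the property ``$Y=y$''. In parallel I would restate the left-hand side: $T\models\dep{\SET{X}}{Y}$ iff for every $\SET{x}\in Ran(\SET{X})$ all assignments in $T^-_{\SET{x}}$ agree on the value of $Y$.

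For the forward direction, assume $T\models\dep{\SET{X}}{Y}$ and fix $(\SET{x},y)\in Ran(\SET{X})\times Ran(Y)$. If $T^-_{\SET{x}}=\emptyset$ the atom holds by the characterization. Otherwise all $s\in T^-_{\SET{x}}$ share one value $y_0$ of $Y$; then either $y_0=y$, so every $s\in T^-_{\SET{x}}$ satisfies $Y=y$, or $y_0\neq y$, so every $s\in T^-_{\SET{x}}$ satisfies $Y\neq y$ --- in both cases the atom holds. As $(\SET{x},y)$ was arbitrary, $T$ satisfies the whole conjunction. For the converse I would argue contrapositively: if $T\not\models\dep{\SET{X}}{Y}$ there are $s,s'\in T^-$ with $s(\SET{X})=s'(\SET{X})=:\SET{x}$ and $s(Y)=y_1\neq y_2=s'(Y)$; then $T^-_{\SET{x}}$ is nonempty and contains both an assignment satisfying $Y=y_1$ (namely $s$) and one satisfying $Y\neq y_1$ (namely $s'$), so it is not homogeneous with respect to ``$Y=y_1$''. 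By the characterization $T\not\models Y=y_1\pindep_{\SET{X}=\SET{x}}Y=y_1$, and since $(\SET{x},y_1)\in Ran(\SET{X})\times Ran(Y)$ the conjunction fails in $T$.

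The main obstacle is entirely in the first step: one has to handle carefully the conventions of subsection \ref{SECDEFPROB} for probabilistic atoms whose conditioning event has probability zero, and observe that in the special instance $\chi_1=\chi_2=(Y=y)$ the general conditional-independence atom collapses to the clean dichotomy ``the conditional probability of $Y=y$ given $\SET{X}=\SET{x}$ is $0$ or $1$ (or the conditioning tuple is absent)''. Once this reduction is made explicit, the two remaining directions are just a routine comparison of the combinatorial condition defining $\dep{\SET{X}}{Y}$ with the family of homogeneity conditions indexed by $(\SET{x},y)$.
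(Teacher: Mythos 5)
Your proof is correct and follows essentially the same route as the paper's: both unwind the abbreviation $Y=y\pindep_{\SET X=\SET x}Y=y$, note that $Pr_T(Y=y\mid Y=y\land\SET X=\SET x)=1$ whenever defined so that the atom collapses to the dichotomy ``$Pr_T(Y=y\mid\SET X=\SET x)$ is $0$ or $1$, or the conditioning tuple is absent,'' and then match this family of homogeneity conditions against the definition of $\dep{\SET X}{Y}$ using finiteness. You are merely more explicit about the converse direction and about the vacuous case $T^-_{\SET x}=\emptyset$, which the paper's proof likewise (implicitly) treats as satisfying the atom.
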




\begin{proof}
The conjunction says that, for every tuple $\SET x,y$ of appropriate values for $\SET X,Y$, either $Pr_T(Y=y|\SET X = \SET x) = 0$, or $Pr_T(Y = y|Y = y \land \SET X = \SET x) = Pr_T(Y=y|\SET X = \SET x)$.

That is, for each $\SET x,y\in Ran(\SET X) \times Ran(Y)$, either this tuple does not occur in $T$ or $Pr_T(Y=y|\SET X = \SET x)=1$, which means, more explicitly:
\[
 \frac{card(\{s\in T^-|s(Y)=y \text{ and } s(\SET X) = \SET x\})}{card(\{s\in T^-| s(\SET X) = \SET x \})} = 1
\]
which is equivalent to
\[
card(\{s\in T^-|s(Y)=y \text{ and } s(\SET X) = \SET x\}) = card(\{s\in T^-| s(\SET X) = \SET x \})
\]

Given the finiteness of the team, this last equality is equivalent to the statement that any assignment $s$ which satisfies $s(\SET X) = \SET x$ also satisfies $s(Y) = y$.
\end{proof}

\subsection{The Markov Condition}

An assumption underlying a vast literature on causal studies is the so-called Markov Condition. As a condition on a causal team $T$, it seems natural to paraphrase it as follows: 

\begin{center}
$Pr_T(X=x | PA_X = pa_X) = Pr_T(X=x | PA_X = pa_X, \SET{N} =\SET{n})$

 for any set $\SET{N}$ of nondescendants of $X$ disjoint from $PA_X$ 

and any tuple $(x,pa_X,\SET{n})$ occurring in $T$.
\end{center}

This is a scheme of conditions which state that each variable is independent of its nondescendants, conditional on its parents. This condition is considered important in causal inference, for example, because it allows to write the joint probability of the system in a compact and usually easy to compute form (product formula):
\[
Pr_T(X_1=x_1\land \dots \land X_n=x_n)=\prod_{i=1..n} Pr_T(X_i=x_i|PA_i=pa_i)
\] 
where $pa_i$ is $x_1,\dots,x_n$ restricted to the values for $PA_i$.
This formula, in turn, gives a modular method for calculating the joint probability in intervened teams:
\[
Pr_{T_{\SET{Z} = \SET{z}}}(X_1=x_1\land \dots \land X_n=x_n)=\prod_{X_i \notin \SET{Z}} Pr_T(X_i=x_i|PA_i=pa_i).
\] 






However, in causal (multi)teams the Markov Condition is not automatically satisfied. As a simple counterexample, consider the team
\begin{center}
\begin{tabular}{|c|c|c|}
\hline
 \multicolumn{2}{|c|}{X \ Y} \\
\hline
 1 & 1 \\
\hline
 2 & 1 \\
\hline
 1 & 2 \\
\hline
\end{tabular}
\end{center}

Since the parent set of $X$ is the empty set, and the tuple $X=1,Y=2$ occurs in the team, the Markov Condition would imply: $P(X=1) = P(X=1|Y=2)$. 
However, here we have $Pr(X=1) = 2/3$, but $P(X=1 | Y=2) =1$.

The Markov Condition is a rather complex condition to require on a team; for example, it might be noticed that its verification can require an exponential time in the number of variables of the domain (since we have to verify such a condition for every appropriate \emph{set} $\SET{Z}$ of variables). It is probably desirable to find a simpler axiom that implies the Markov Condition. We suggest the following \emph{Markov Axiom Scheme} (MA):

\begin{center}
$Pr_T(X=x | PA_X = pa_X) = Pr_T(X=x | PA_X = pa_X, \SET{Y} =\SET{y})$

 where $\SET{Y}$ is the set of \emph{all} nondescendants of $X$ not occurring in $PA_X$ 

and for any tuple $(x,\SET{y})$ occurring in $T$.
\end{center}

Notice that in this case we have one axiom for each variable in the domain; therefore, the complexity of verifying the Markov Axiom Scheme is linear in the number of variables. We have to prove that this condition is sufficient to enforce the Markov condition. This is obtained by a merely probabilistic reasoning, which we are allowed to used thanks to our earlier proof that teams endowed with our probability function form a $\sigma$-algebra. The following lemma is an immediate consequnce of basic probability calculus:

\begin{lemma} \label{SHIFTLEMMA}
For any $\CO$ formulas $\psi,\chi,\theta$ and causal multiteam $T$, and assuming that $Pr_T(\chi \land \theta)>0$, it follows that $Pr_T(\psi \land \chi | \theta) = Pr_T(\psi| \chi\land \theta) \cdot Pr_T(\chi|\theta)$.
\end{lemma}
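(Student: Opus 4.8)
The plan is to reduce the statement to the ordinary chain rule of conditional probability. We are entitled to use standard probability calculus because the preceding theorem establishes that $(T,\mathcal{E}_T,P_T)$ is a genuine (finitely additive) probability space, and because $\CO$ is closed under conjunction, so that the events defined by $\psi\land\chi\land\theta$, $\chi\land\theta$, etc., all belong to $\mathcal{E}_T$ and receive a well-defined probability from $P_T$.

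First I would dispose of the positivity bookkeeping. By hypothesis $Pr_T(\chi\land\theta)>0$; since the set of assignments of $T$ satisfying $\chi\land\theta$ is contained in the set satisfying $\theta$, monotonicity of $P_T$ gives $Pr_T(\theta)\geq Pr_T(\chi\land\theta)>0$. Hence every conditional probability occurring in the statement is defined according to the convention $Pr_T(\varphi_2\mid\varphi_1):=Pr_T(\varphi_1\land\varphi_2)/Pr_T(\varphi_1)$, with a nonzero denominator.

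Then I would simply expand, noting that a conjunction of $\CO$ formulas defines the intersection of the corresponding event sets, so that (up to reordering of conjuncts, which does not change the defined event) all three of $Pr_T((\psi\land\chi)\land\theta)$, $Pr_T(\psi\land(\chi\land\theta))$ and $Pr_T(\psi\land\chi\land\theta)$ denote the same value $P_T\bigl(\{s\in T^- \mid \{s\}\models \psi\land\chi\land\theta\}\bigr)$. Using this and the convention above,
\[
Pr_T(\psi\mid\chi\land\theta)\cdot Pr_T(\chi\mid\theta)
=\frac{Pr_T(\psi\land\chi\land\theta)}{Pr_T(\chi\land\theta)}\cdot\frac{Pr_T(\chi\land\theta)}{Pr_T(\theta)}
=\frac{Pr_T(\psi\land\chi\land\theta)}{Pr_T(\theta)}
=Pr_T(\psi\land\chi\mid\theta),
\]
where cancelling the factor $Pr_T(\chi\land\theta)$ is legitimate precisely by the positivity observed in the previous step. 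There is essentially no obstacle here: the only points requiring care are that the single hypothesis $Pr_T(\chi\land\theta)>0$ already forces both denominators $Pr_T(\theta)$ and $Pr_T(\chi\land\theta)$ to be nonzero, and that we may treat $Pr_T$ restricted to $\CO$-definable events as a probability measure — which is exactly the content of the earlier probability-space theorem. Accordingly the lemma is, as stated in the text, an immediate consequence of basic probability calculus.
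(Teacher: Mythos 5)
Your proof is correct and matches the paper's intent exactly: the paper gives no explicit argument, merely asserting the lemma is "an immediate consequence of basic probability calculus" once $(T,\mathcal{E}_T,P_T)$ is known to be a probability space, and your write-up simply fills in that standard chain-rule computation together with the monotonicity observation that $Pr_T(\chi\land\theta)>0$ forces $Pr_T(\theta)>0$.
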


\begin{teo}
Suppose that a causal multiteam $T$ satisfies the Markov Axiom Scheme. Then it satisfies the Markov Condition.
\end{teo}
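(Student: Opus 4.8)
The plan is to derive the Markov Condition from the Markov Axiom Scheme by a marginalization argument, relying on the fact — established just above — that a finite causal multiteam together with $Pr_T$ forms a genuine probability space, so that the standard rules of probability calculus (in particular Lemma \ref{SHIFTLEMMA}) are available.

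First I would fix an arbitrary instance of the Markov Condition: a variable $X$, a set $\SET N$ of nondescendants of $X$ with $\SET N \cap PA_X = \emptyset$, and a tuple $(x,pa_X,\SET n)$ occurring in $T$; the latter guarantees $Pr_T(PA_X = pa_X) > 0$ and $Pr_T(PA_X = pa_X \wedge \SET N = \SET n) > 0$, so every conditional probability written below is defined. Let $\SET Y$ be the set of \emph{all} nondescendants of $X$ not in $PA_X$ — the set figuring in the Markov Axiom Scheme — and put $\SET M := \SET Y \setminus \SET N$. Since $\SET N$ consists of nondescendants of $X$ none of which lies in $PA_X$, we have $\SET N \subseteq \SET Y$; hence $PA_X$, $\SET N$, $\SET M$ are pairwise disjoint and $\SET N \cup \SET M = \SET Y$.

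The heart of the argument is the following computation. Write $\theta$ for $PA_X = pa_X \wedge \SET N = \SET n$. Splitting the event $\{X=x\}\cap\{\theta\}$ according to the value of $\SET M$ and dividing by $Pr_T(\theta)$ gives
\[
Pr_T(X = x \mid \theta) \;=\; \sum_{\SET m} Pr_T(X = x \wedge \SET M = \SET m \mid \theta),
\]
the sum ranging over the finitely many tuples $\SET m$ for which $(pa_X,\SET n,\SET m)$ occurs in $T$ (the remaining terms vanish). For each such $\SET m$, Lemma \ref{SHIFTLEMMA} (applied with $\psi := X = x$, $\chi := \SET M = \SET m$) yields
\[
Pr_T(X = x \wedge \SET M = \SET m \mid \theta) \;=\; Pr_T(X = x \mid \SET M = \SET m \wedge \theta)\cdot Pr_T(\SET M = \SET m \mid \theta).
\]
Now $\SET M = \SET m \wedge \theta$ is, by the disjointness and the identity $\SET N \cup \SET M = \SET Y$, exactly the event $PA_X = pa_X \wedge \SET Y = \SET y$ for the tuple $\SET y$ combining $\SET n$ and $\SET m$; so the Markov Axiom Scheme gives $Pr_T(X = x \mid \SET M = \SET m \wedge \theta) = Pr_T(X = x \mid PA_X = pa_X)$, a quantity independent of $\SET m$. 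Factoring it out of the sum and using $\sum_{\SET m} Pr_T(\SET M = \SET m \mid \theta) = 1$, I conclude $Pr_T(X = x \mid \theta) = Pr_T(X = x \mid PA_X = pa_X)$, which is the required instance of the Markov Condition.

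The only delicate points are bookkeeping ones rather than conceptual obstacles: one must keep all conditional probabilities well-defined (which is why the occurrence of $(x,pa_X,\SET n)$ in $T$ is assumed), restrict the summation to value-tuples $\SET m$ that actually occur so that Lemma \ref{SHIFTLEMMA} is applied exactly where its denominator is positive, and check that $\SET M = \SET m \wedge PA_X = pa_X \wedge \SET N = \SET n$ is genuinely an instance of the conditioning used in the Markov Axiom Scheme — i.e.\ that $PA_X$, $\SET N$, $\SET M$ jointly exhaust $PA_X \cup \SET Y$. Note that no acyclicity assumption is needed for this step, since the relevant decomposition follows directly from the definitions of $\SET N$, $\SET Y$ and $\SET M$.
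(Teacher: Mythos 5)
Your proof is correct and follows essentially the same route as the paper's: you decompose $Pr_T(X=x\mid PA_X=pa_X\wedge \SET N=\SET n)$ over the values of the remaining nondescendants ($\SET M$, which the paper calls $\SET W$), apply Lemma \ref{SHIFTLEMMA} to each summand, invoke the Markov Axiom to make the first factor independent of $\SET m$, and factor it out of a sum that equals $1$. The only differences are notational (the paper conditions on a set $\SET Z\supseteq PA_X$ rather than on $PA_X$ and $\SET N$ separately) and that you absorb the degenerate case $\SET M=\emptyset$ into the general computation instead of treating it separately.
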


\begin{proof}
Let $X_i\in dom(T)$, $PA_i$ its set of parents, $\SET{Z}$ a set of variables such that $PA_i \subseteq \SET{Z} \subseteq dom(T)\setminus \{X_i\}$, and such that any element of $\SET{Z}\setminus PA_i$ is a nondescendant of $X_i$.
We want to show that $Pr_T(X_i=x_i | \bigwedge_{X_j\in \SET{Z}} X_j = x_j) = Pr_T(X_i=x_i | \bigwedge_{X_j\in PA_i} X_j = x_j)$ for any tupla $(x_1,\dots,x_n)$ occurring in $T$.

Let $\SET{W} = \{X_j\in dom(T) | X_j \text{ nondescendant of $X_i$ and } X_j\notin \SET{Z}\}$. In case $\SET{W}$ is empty, this means that $\SET{Z}\setminus PA_i$ is the set of all nondescendants of $X_i$ that are not in $PA_i$; therefore the equality we are trying to prove is simply an instance of the Markov Axiom.

Suppose then that $\SET{W}$ is nonempty. We now write $\SET{z}$ for the subsequence of values of the specific sequence $(x_1,\dots,x_n)$ that correspond to the variables in $\SET{Z}$; and $\SET{wz}$ for the concatenation of sequence $\SET{z}$ with some sequence $\SET{w}$ of values for $\SET{W}$. Define $T_{\SET{Z}}(\SET{W}) = \{\SET{w}|\text{ there is } s\in T^- \text{ such that } s(\SET{W}) = \SET{w} \text{ and } s(\SET{Z}) = \SET{z}\} = \{\SET{w}| Pr_T(\SET{W} = \SET{w} \land \SET{Z} = \SET{z})>0 \}$. Notice then that, if  
$Pr_T(\SET{W} = \SET{w} \land \SET{Z} = \SET{z})=0$, it follows that $Pr_T(X_i = x_i \land \SET{W} = \SET{w} | \bigwedge_{X_j\in\SET{Z}} X_j = x_j) = 0$. Therefore we can write:
\[
Pr_T(X_i = x_i | \bigwedge_{X_j\in\SET{Z}} X_j = x_j) = 
\]
\[
= \sum_{\SET{w}\in T_{\SET{Z}}(\SET{W})}
 Pr_T(X_i = x_i \land \SET{W} = \SET{w} | \bigwedge_{X_j\in\SET{Z}} X_j = x_j) 
\]
because the events $X_i = x_i \land \SET{W} = \SET{w}$, as $\SET{w}$ varies among tuples occurring in $T_{\SET{Z}}(\SET{W})$, are disjoint; and for other values $\SET{w}\notin T_{\SET{Z}}(\SET{W})$, $Pr_T(X_i = x_i \land \SET{W} = \SET{w} | \bigwedge_{X_j\in\SET{Z}} X_j = x_j) = 0$.

Since, furthermore, for any $\SET{w}\in  T_{\SET{Z}}(\SET{W})$ it holds that  $Pr_T(\SET{W} = \SET{w} \land \bigwedge_{X_j\in\SET{Z}} X_j = x_j)>0$, we are entitled to use Lemma \ref{SHIFTLEMMA} on each summand, which yields the expression
\[
\sum_{\SET{w}\in T_{\SET{Z}}(\SET{W})} [ Pr_T(X_i = x_i | \SET{W} = \SET{w}, \bigwedge_{X_j\in\SET{Z}} X_j = x_j)\cdot Pr_T(\SET{W} = \SET{w} | \bigwedge_{X_j\in\SET{Z}} X_j = x_j) ].
\]
Now, applying the appropriate instances of the Markov Axiom, one obtains 
\[
\sum_{\SET{w}\in T_{\SET{Z}}(\SET{W})} [ Pr_T(X_i = x_i | \bigwedge_{X_j\in PA_i} X_j = x_j)\cdot Pr_T(\SET{W} = \SET{w} | \bigwedge_{X_j\in\SET{Z}} X_j = x_j) ].
\]
Since the left factor does not depend anymore on $\SET{w}$, we can extract it:
\[
Pr_T(X_i = x_i | \bigwedge_{X_j\in PA_i} X_j = x_j)\cdot \sum_{\SET{w}\in T_{\SET{Z}}(\SET{W})} Pr_T(\SET{W} = \SET{w} | \bigwedge_{X_j\in\SET{Z}} X_j = x_j). 
\]
Since the events form a $\sigma$-algebra, and $T_{\SET{Z}}(\SET{W})$ contains all the values $\SET{w}$ such that $\SET{W} = \SET{w} \land \bigwedge_{X_j\in\SET{Z}} X_j = x_j$ has positive probability, the right factor sums up to 1, and we obtain the desired equality.
\end{proof}

\subsection{Pearl's example in causal teams}

We conclude this section illustrating the workings of the probabilistic semantics for causal teams over an example taken from Pearl (\cite{Pea2000}, $1.4.4$). 

For Pearl the purpose of the example is to show that causal Bayesian networks are insufficient to answer counterfactual questions, which require structural equations.
Our concern here would only be with that aspect of the example (Pearl's ``model $2$'') which
illustrates the three stages in his treatment of counterfactuals
in the structural equation framework. The following quote gives the
spirit of the enterprise: 
\begin{quote}
The preceding considerations further imply that the three tasks listed
in the beginning of this section - prediction, intervention, and counterfactuals
- form a natural hierarchy of causal reasoning tasks, with increasing
levels of refinement and increasing demands on the knowledge required
for accomplishing the tasks. Prediction is the simplest of the three,
requiring only a specification of a joint distribution function. The
analysis of interventions requires a causal structure in addition
to a joint distribution. Finally, processing counterfactuals is the
hardest task because it requires some information about the functional
relationships and/or the distribution of the omitted factors. (p.
38)
\end{quote}
The example is about a clinical test. There are two exogenous variables
$X$ (being treated: $X=1$; not being treated: $X=0$) and $Y$ (which
measures the subject's response: recovery $Y=0$, death, $Y=1)$. There
are two exogenous (hidden), independent variables $U_{1}$ and $U_{2}$.
The equations of the model are:
\[
\left\{\begin{array}{l}
x=u_{1} \\
y=xu_{2}+(1-x)(1-u_{2}) 
\end{array}\right.
\]

They induce the following $DAG$: 

\begin{figure}[htbp]
	\centering
		\begin{tikzpicture}
		
		\node(X) {$X$};
		
		\node(Y) at (0,-1.2) {$Y$};
		 
		\node(U1) at (1,0.6) {$U_1$};
		
		\node(U2)  at (1, -0.6) {$U_2$}; 
		
		\draw[->] (X)--(Y) ; 
		
		\draw[->] (U1)--(X); 
		
		\draw[->] (U2)--(Y); 
		
		\end{tikzpicture}
\end{figure}


The two independent binary variables are governed by the probability
distribution $P(u_{1}=1)=P(u_{2}=1)=\frac{1}{2}$, which generates joint
probability distributions $P(x,y,u_{2})$ and $P(x,y)$ that may be
calculated using the formula from subsection \ref{SUBSCOMP}. For instance,  $P(X=x\land Y=y)=$
\[
= \sum_{\left\{ \left(u_{1},u_{2}\right)\mid u_{1}=x\wedge u_{1}u_{2}+(1-u_{1})(1-u_{2})=y\right\} }P(U_{1}=u_{1}\wedge U_{2}=u_{2})  = 0,25
\]
 for all $x$ and $y$. Here is a table which lists $P(x,y,u_{2})$
and $P(x,y):$
\[
\small
\begin{array}{c|ccc|ccc|ccc}
 &  & u_{2}=1 &  &  & u_{2}=0 &  &  & Marginal\\
 & x=1 &  & x=0 & x=1 &  & x=0 & x=1 &  & x=0\\
\hline y=1 & 0 &  & 0.25 & 0.25 &  & 0 & 0.25 &  & 0.25\\
y=0 & 0.25 &  & 0 & 0 &  & 0.25 & 0.25 &  & 0.25
\end{array}
\]

Pearl considers the following counterfactual query: 
\begin{description}
\item [{({*})}] What is the probability $Q$ that a subject who died under
treatment $(x=1,y=1)$ would have recovered $(y=0)$ had he or she
not been treated $(x=0)?$
\end{description}
He calls the fact that the subject died under treatment the evidence
\[
e=\left\{ (x=1,y=1)\right\} 
\]
and points out that answering the above query involves three
steps: ($1$) given the evidence $e$, ($3$) we compute the probability
$Y=y$ ($2$) under the hypothetical condition $X=x$. 

In his example this amounts to:
\begin{enumerate}
\item We apply the evidence $e$ to the set of equations, and conclude that
it is compatible with only one realization: $u_{1}=1$ and $u_{2}=1$.
This gives us a value for $u_{2}$. 
\item We make an intervention $do(X=0)$ on the variable $X$. It results
in the first equation being replaced with $x=0$. 
\item We solve the second equation for $x=0,$ $u_{2}=1$. We get $y=0$
from which we conclude that the probability $Q$ is 1. 
\end{enumerate}
We are told that in the general case
\begin{itemize}
\item step ($1$) corresponds to an abduction: update the probability $P(\SET u)$
to obtain $P(\SET u|\SET e)$
\item step ($2$) corresponds to an action: replace the equations corresponding
to each intervened variable $X$ by $X=x$ 
\item step ($3$) corresponds to a prediction: compute $P(y)$ in the new model. 
\end{itemize}
(Pearl 2009, p.61)

Pearl is explicit on the point that if for each value $\SET u$ of the
exogenous variables $\SET U$ there is a unique solution for $Y$, then
step ($3$) always gives a unique solution for the needed probability:
we simply sum up the probabilities $P(\SET u|e)$ assigned to all $\SET u$
which yield $Y=y$ as a solution. More technically: 
\[
\begin{array}{c}
P(y)=\sum_{\left\{ \SET u:f'_{Y}(\SET u)=y\right\} }P(\SET u|e)
\end{array}
\]


Pearl's model $2$ can be represented in our framework by using the causal team $T$ 

\begin{center}
\begin{tabular}{|c|c|c|c|}
\hline
 \multicolumn{4}{|c|}{ } \\
 \multicolumn{4}{|c|}{$U_1$\tikzmark{UP1}  \tikzmark{UP2}$U_2$\tikzmark{UP2'} \   \tikzmark{XP1}$X$\tikzmark{XP1'} \   \tikzmark{YP1}$Y$} \\
\hline
 0 & 0 & 0 & 1\\
\hline
0 & 1 & 0 & 0\\
\hline
1 & 0 & 1 & 0\\
\hline
1 & 1 & 1 & 1\\
\hline
\end{tabular}
 \begin{tikzpicture}[overlay, remember picture, yshift=.25\baselineskip, shorten >=.5pt, shorten <=.5pt]
  \draw [->] ([yshift=3pt]{pic cs:XP1'})  [line width=0.2mm] to ([yshift=3pt]{pic cs:YP1});
  \draw ([yshift=7pt]{pic cs:UP1})  edge[line width=0.2mm, out=35,in=125,->] ([yshift=4pt]{pic cs:XP1});
	\draw ([yshift=5pt]{pic cs:UP2'})  edge[line width=0.2mm, out=35,in=125,->] ([yshift=7pt]{pic cs:YP1});
  \end{tikzpicture}
\end{center}


 where the range of each variable is $\left\{ 0,1\right\}$
and the arrows of the DAG 
are induced by the two structural equations given earlier, which are part of the description of the team. Notice also that this team, interpreted as a multiteam, bears the same probabilities that are carried by the system described by Pearl.




It can be checked that $T$ is an explicit causal team. 
We formulate Pearl's query in our language as 
\begin{equation}\label{PEARL1}
 X=1\wedge Y=1\supset(X=0\cf Y=0)
\end{equation}
or, in its probabilistic version, as

\begin{equation}\label{PEARL2}
 X=1\wedge Y=1\supset(X=0\cf Pr(Y=0)=1).
\end{equation}

By the clause for selective implication, the formula \eqref{PEARL1} is true in the causal team $T$ if and only if the
formula $(X=0\cf Y=0)$ is true in the causal team represented by the annotated table
\begin{center}
\begin{tabular}{|c|c|c|c|}
\hline
 \multicolumn{4}{|c|}{ } \\
 \multicolumn{4}{|c|}{$U_1$\tikzmark{UQ1}  \tikzmark{UQ2}$U_2$\tikzmark{UQ2'} \  \tikzmark{XQ1}$X$\tikzmark{XQ1'} \  \tikzmark{YQ1}$Y$} \\
\hline
1 & 1 & 1 & 1\\
\hline
\end{tabular}
 \begin{tikzpicture}[overlay, remember picture, yshift=.25\baselineskip, shorten >=.5pt, shorten <=.5pt]
  \draw [->] ([yshift=3pt]{pic cs:XQ1'})  [line width=0.2mm] to ([yshift=3pt]{pic cs:YQ1});
  \draw ([yshift=7pt]{pic cs:UQ1})  edge[line width=0.2mm, out=35,in=125,->] ([yshift=4pt]{pic cs:XQ1});
	\draw ([yshift=5pt]{pic cs:UQ2'})  edge[line width=0.2mm, out=35,in=125,->] ([yshift=7pt]{pic cs:YQ1});
  \end{tikzpicture}
\end{center}

 and the two structural equations. Now the formula $(X=0\cf Y=0)$ is true, given that the intervention
$do(X=0)$ generates the causal team represented by the annotated team

\begin{center}
\begin{tabular}{|c|c|c|c|}
\hline
 \multicolumn{4}{|l|}{ } \\
 \multicolumn{4}{|l|}{$U_1$\tikzmark{UR1}  \tikzmark{UR2}$U_2$\tikzmark{UR2'} \   \tikzmark{XR1}$X$\tikzmark{XR1'} \   \tikzmark{YR1}$Y$} \\
\hline
1 & 1 & 0 & 0\\
\hline
\end{tabular}
 \begin{tikzpicture}[overlay, remember picture, yshift=.25\baselineskip, shorten >=.5pt, shorten <=.5pt]
  \draw [->] ([yshift=3pt]{pic cs:XR1'})  [line width=0.2mm] to ([yshift=3pt]{pic cs:YR1});
	\draw ([yshift=5pt]{pic cs:UR2'})  edge[line width=0.2mm, out=35,in=125,->] ([yshift=7pt]{pic cs:YR1});
  \end{tikzpicture}
\end{center}
 and the same system of equations, except for the first one, replaced by $X=0$\footnote{Actually, in the causal team formalism this is represented by the removal of the invariant function $\mathcal{F}_{T^{X=1 \land Y=1}}(X)$, and the change of role of $X$, from endogenous to exogenous variable.}. Obviously $Y=0$ is true in this causal team, which shows that the formula \eqref{PEARL2} is true. 

Later on in section 7, where we will introduce various notions of cause within causal team semantics, we will see that in this example it is also true that $X$ is a \textit{direct
cause} of $Y$ in $T$.

Now recall our observations from subsection \ref{SECDEFPROB} concerning the definability of conditional probabilistic formulas. Those results show that
\[
X=1\wedge Y=1\supset(X=0\cf Pr(Y=0)=1)
\]
 is true in a causal team $T$ if and only if 
\[
Pr_{T}(X=0\cf Y=0 \mid X=1\wedge Y=1)=1.
\]
 We notice that the formula inside the brackets shows nicely Pearl's
three ``mental steps''.

\section{Causal modeling} \label{CAUSMOD}

\subsection{Direct and total cause}

What might be interesting to express in our languages? We consider the two basic type-causal notions from Woodward (\cite{Woo2003}), \emph{direct} and \emph{total} cause.

Let us consider the case of \textbf{direct cause}. Recall Woodward's definition of a direct cause in section \ref{SUBSCAUSES}:
\begin{quote}
A necessary and sufficient condition for $X$ to be a direct cause of $Y$ with respect to some variable set $V$ is that there be a possible intervention on $X$ that will change $Y$ (or the probability distribution of $Z$) when all other variables in $V$ besides $X$ and $Z$ are held fixed at some value by interventions.
\end{quote}

Woodward's definition is slightly ambiguous in that it talks about a change in $Y$, but  does not say with respect to what the change is made; to $Y$'s actual value? To some possible value of $Y$, i.e., some $y\in Ran(Y)$? To a $y$ occurring in some allowed configuration (assignment)? We resolve the ambiguity by stipulating that the values of $Y$ to compared be generated by \emph{two} distinct interventions. 

 The kind of intervention needed for the evaluation of direct causation of $X$ on $Y$ is a simultaneous intervention on all variables in the system except for $X$ and $Y$. As an example, let $T$ be the team

\begin{center}
\begin{tabular}{|c|c|c|}
\hline
 \multicolumn{3}{|c|}{ }\\ 
 \multicolumn{3}{|l|}{X\tikzmark{X14} \  \tikzmark{Y14}Z\tikzmark{Y14'} \  \tikzmark{Z14}Y} \\
\hline
 1 & 1 & 2 \\
\hline
 2 & 2 & 4 \\
\hline
 3 & 3 & 6 \\
\hline
\end{tabular}
 \begin{tikzpicture}[overlay, remember picture, yshift=.25\baselineskip, shorten >=.5pt, shorten <=.5pt]
  \draw [->] ([yshift=3pt]{pic cs:X14})  [line width=0.2mm] to ([yshift=3pt]{pic cs:Y14});
	\draw [->] ([yshift=3pt]{pic cs:Y14'})  [line width=0.2mm] to ([yshift=3pt]{pic cs:Z14});
  \draw ([yshift=7pt]{pic cs:X14})  edge[line width=0.2mm, out=55,in=125,->] ([yshift=7pt]{pic cs:Z14});
  \end{tikzpicture}
\end{center}
with $\mathcal{F}_Z(x):= x$ and $\mathcal{F}_Y(x,z) = x+z$. It is an essential ingredient of this example that there is an arrow from $X$ to $Y$. We show that $X$ is a direct cause of $Y$ in $T$. First of all we must fix all other variables (in this case, just $Z$) to an appropriate value (we choose $1$) by an intervention, which also updates $Y$:
\begin{center}
\begin{tabular}{|c|c|c|}
\hline
 \multicolumn{3}{|c|}{ }\\ 
 \multicolumn{3}{|l|}{X\tikzmark{X17} \  \tikzmark{Y17}Z\tikzmark{Y17'} \ \ \tikzmark{Z17}Y} \\
\hline
 1 & \textbf{1} & \textbf{2} \\
\hline
 2 & \textbf{1} & \textbf{3} \\
\hline
 3 & \textbf{1} & \textbf{4} \\
\hline
\end{tabular}
 \begin{tikzpicture}[overlay, remember picture, yshift=.25\baselineskip, shorten >=.5pt, shorten <=.5pt]
	\draw [->] ([yshift=3pt]{pic cs:Y17'})  [line width=0.2mm] to ([yshift=3pt]{pic cs:Z17});
  \draw ([yshift=7pt]{pic cs:X17})  edge[line width=0.2mm, out=55,in=125,->] ([yshift=7pt]{pic  cs:Z17});
  \end{tikzpicture}
\end{center}
Then we intervene in two different ways on $X$:
\begin{center}
\begin{tabular}{|c|c|c|}
\hline
 \multicolumn{3}{|c|}{ }\\ 
 \multicolumn{3}{|l|}{X\tikzmark{X15} \  \tikzmark{Y15}Z\tikzmark{Y15'} \ \ \tikzmark{Z15}Y} \\
\hline
 \textbf{1} & 1 & \textbf{2} \\
\hline
\end{tabular}
 \begin{tikzpicture}[overlay, remember picture, yshift=.25\baselineskip, shorten >=.5pt, shorten <=.5pt]
	\draw [->] ([yshift=3pt]{pic cs:Y15'})  [line width=0.2mm] to ([yshift=3pt]{pic cs:Z15});
  \draw ([yshift=7pt]{pic cs:X15})  edge[line width=0.2mm, out=55,in=125,->] ([yshift=7pt]{pic cs:Z15});
  \end{tikzpicture}
\begin{tabular}{|c|c|c|}
\hline
 \multicolumn{3}{|c|}{ }\\ 
 \multicolumn{3}{|l|}{X\tikzmark{X16} \  \tikzmark{Y16}Z\tikzmark{Y16'} \ \ \tikzmark{Z16}Y} \\
\hline
  \textbf{2} & 1 & \textbf{3} \\
\hline
\end{tabular}
 \begin{tikzpicture}[overlay, remember picture, yshift=.25\baselineskip, shorten >=.5pt, shorten <=.5pt]
	\draw [->] ([yshift=3pt]{pic cs:Y16'})  [line width=0.2mm] to ([yshift=3pt]{pic cs:Z16});
  \draw ([yshift=7pt]{pic cs:X16})  edge[line width=0.2mm, out=55,in=125,->] ([yshift=7pt]{pic cs:Z16});
  \end{tikzpicture}	
\end{center}
The fact that the two interventions select distinct values for $Y$ proves that $X$ is a direct cause of $Y$.

The peculiar form of these kinds of interventions makes so that, \emph{if there is an arrow from $X$ to $Y$}, after the intervention all the columns in the team are constant, that is, a singleton team is produced. This suggests that direct cause can also be defined as an operation on teams, instead of as an operation on assignments. Let $Fix(\SET z)$ be an abbreviation for $\bigwedge_{Z\in Dom(T)\setminus\{X,Y\}}Z=z$: 
\[
T\models DC(X;Y) \text{ iff there are } \SET z, x\neq x', y\neq y' \text{ such that }
\]
\[
 T\models (Fix(\SET z) \land X=x)\cf Y=y 
\]
\[
\text{ and } T\models (Fix(\SET z) \land X=x')\cf Y=y'. 
\]
This semantical definition suggests that direct causation might be definable in an appropriate  quantifier extension of $\CD$. We do not venture into this direction, but we look instead for a model-dependent definition in the language $\CD$ \emph{enriched with intuitionistic disjunction} $\sqcup$\footnote{The results of \cite{YanVaa2016} entail that, in the case of propositional dependence logic, the intuitionistic disjunction is eliminable in terms of the other propositional connectives (although not uniformly definable, \cite{Yan2017}). It is unclear, at the current state of investigation, whether these results translate into our framework.} \footnote{A remark: adding to $\C$ or $\CO$ the intuitionistic disjunction, one obtains logics that are still downward closed, but not flat.}. With the help of this operator, we can express the fact that $X$ is a direct cause of $Y$ in the causal team $T$, $T\models DC(X;Y)$, by the formula
\[
\bigsqcup_{x\neq x', y\neq y',\SET z}[(Fix(\SET z) \land X=x) \cf Y=y] \land [(Fix(\SET z) \land X=x') \cf Y=y']
\]
where the disjunction ranges over all tuples $(x,x',y,y',\SET z)\in Ran(X)^2\times Ran(Y)^2\times Ran(\SET Z)$ such that $x\neq x'$ and $y\neq y'$. This definition is model-dependent in that it makes reference to the ranges of varianbles, and to a set $\SET Z = dom(T)\setminus \{X,Y\}$ of variables that is determined by the domain of the causal team.



If there is an arrow from $X$ to $Y$, any intervention of this kind will collapse the team to a singleton team, so that the condition for direct cause can be checked (it can fail; see the example below). But if instead some intervention of this form does \emph{not} collapse the team into a singleton team, then necessarily there is no arrow from $X$ to $Y$. Therefore, the graph of direct causes is a subgraph of the graph of the team. It can be a \emph{proper} subgraph, that is, there might be an arrow between two variables $X$ and $Y$, and yet those fail to be in the direct cause relationship; this is what happens in the following causal team (with, say, $Ran(X)=Ran(Y)=\{0,1\}$):
\begin{center}
\begin{tabular}{|c|c|}
\hline
 \multicolumn{2}{|c|}{$X$ \hspace{-3pt}$\rightarrow$ \hspace{-3pt}$Y$} \\
\hline
 0 & 0 \\
\hline
1 & 0 \\
\hline
 \end{tabular}
\end{center}

The definition of probabilistic direct cause in $\PCD$ presents a little surprise.
As in the deterministic case, we use counterfactual antecedents of the form $X=x \land Fix(\SET z)$ to try to reduce to one the number of configurations occurring in the multiteam; however, now, even in presence of an arrow from $X$ to $Y$, the resulting team is not necessarily a singleton team, but it will be in general a collection of identical copies of one and the same assignment; the formula $Y=y$ will assume probability 0 or 1. This leads to simplifications in the definition of probabilistic direct cause. Write $\Psi_{x,x';y;\SET z}$ for
\[
 ((Fix(\SET z) \land X=x) \cf Pr(Y=y) = 0 \land ((Fix(\SET z) \land X=x') \cf Pr(Y=y)=1.
\]
Then, probabilistic direct cause can be formulated as:
\[
T\models PDC(X;Y) \text{ iff there are } \SET z, x, x', y \text{ such that }
 T\models \Psi_{x,x';y;\SET z}
\]
A non-uniform definition in  $\PCD$ is as follows:
\[
\bigsqcup_{x\neq x',y,\SET z}\Psi_{x,x';y;\SET z}
\]
We may conclude that the probabilistic notion of direct cause collapses to the deterministic one.

The case of \textbf{total cause} (\cite{Woo2003}) is more complicated. Recall Woodward's definition from section 2.3:
\begin{quote}
$X$ is a total cause of $Y$ if and only if there is a possible intervention on $X$ that will change $Y$ or the probability distribution of $Y$.
\end{quote}
This formulation of Woodward is again very ambiguous in some respects; as before, we can think this ``change'' in terms of two interventions; but, are the ``changes'' mentioned relative to some fixed actual configuration of the system (i.e. assignment), or should one check all possible configurations? We opt here for the second interpretation. This gives us a reasonable condition for obtaining again the collapse of the team to a singleton. The idea to obtain this is to fix (to some value) all the variables that are not descendants of $X$. Let us call this set $ND_X$; we use letter $\SET w$ to indicate values for these variables. Write $Fix'(\SET w)$ for $\bigwedge_{W\in ND_X} W=w$, and $\Xi_{x, x';y,y';\SET w}$ for 
\[
Fix'(\SET w) \cf [(X=x\cf Y=y) \land (X=x'\cf Y=y')].
\]

We can then define total cause as:

\[
T\models TC(X;Y) \text{ iff there are } x\neq x',\SET w\in T(ND_X), y\neq y'  
\]
\[
\text{ such that } 
 T\models \Xi_{x, x';y,y';\SET w}.
\]


This, once more, can be witten a a single formula:
\[
T\models \bigsqcup_{x\neq x',\SET w\in T(ND_X), y\neq y'} \Xi_{x, x';y,y';\SET w}.
\]

Things work differently in the probabilistic framework. Notice that in the language $\PCD$ we have (abbreviated) formulas of the form
\[
X=x \cf Pr(Y=y) = \epsilon
\]
saying that an intervention fixing the value of $X$ at $x$ will affect in a certain way the  probability distribution of $Y$. If a conjunction of the following form 
\[
\Theta_{x,x';y;\epsilon,\epsilon'}: (X=x \cf Pr(Y=y) = \epsilon) \land (X=x' \cf Pr(Y=y) = \epsilon')
\]
holds for two distinct values $\epsilon, \epsilon'$, we can conclude that $X$ is a total cause in the (probabilistic) sense given by Woodward. More precisely, we can treat probabilistic total causation as follows. $X$ is a \textbf{(probabilistic) total cause} of $Y$, $T\models PTC(X;Y)$, iff
\[
 \text{there are }  x\neq x', y,\epsilon\neq\epsilon' \text{ such that } T\models \Theta_{x,x';y;\epsilon,\epsilon'}.
\]
Compare this with the definition of deterministic total cause; the clauses have been considerably simplified. This follows from the fact that, differently from deterministic atoms of the form $Y=y$, the probabilistic atoms $Pr(Y=y)=\epsilon$ are not flat; they instead express a global property of the team. Therefore, there is no need to use the trick that was applied in the deterministic case (fixing counterfactually the nondescendants of $X$, so that  the team collapses to a single assignment). In this probabilistic context, total causation is genuinely a property of teams, not of single assignments.

 Observing that the probability of any formula in the language is an integer multiple of the smallest unit $u = \frac{1}{card(T^-)}$, and that probabilities range within $[0,1]\cap \mathbb{R}$, we can also explictly define (in a non-uniform way) $T\models PTC(X;Y)$ as:
\[
\bigsqcup_{x\neq x',y, m\neq n,m,n=0..card(T^-)}\Theta_{x,x';y;mu,nu}.
\]



\subsection{Higher-order effects}

In general, a variable $X$ might be thought to have some kind of ``causal effect'' on another variable $Y$ if intervening in some way on $X$ can change the state of variable $Y$. In the kind of language that is usually applied to describe a causal model, the ``state'' of a variable $Y$ is completely summarized by a formula of the form $Y=y$, asserting that $Y$ is taking a specific value $y$. But, in a team, it may happen that a variable $Y$ satisfies \emph{none} of such formulas. Yet there are properties that can be ascribed or not to a variable $Y$ in $T$, and that might cease or begin to hold after an intervention on $X$. One example are statements on the probability distribution of $Y$. Or, there might be some deterministic property, say $\psi(Y)$, which does not hold in $T$, but holds after an intervention that reduces the range of $Y$-values occurring in the team.
This suggests that ``$X$ is a total cause of $Y$'' might be redefined, in contexts more general than the usual causal modeling, as the statement that ``there is an intervention on $X$ that changes some property of $Y$''; and similarly for other notions of causation. Therefore, the arguments above rather show that the notion of direct cause is special, in that its evaluation over a team can be reduced to evaluation over causal models.


Our formalism allows many more counterfactuals than those that are typically used in the definition of causal notions; it allows us to ponder, for example, what effects an intervention  has on a \emph{set} of variables. Let us consider an example which is natural in the team context, but is unlikely to emerge in the usual discussions on interventionist causality. Consider a causal team $T$ of the form
\begin{center}
\begin{tabular}{|c|c|c|}
\hline
 \multicolumn{3}{|c|}{X\tikzmark{X12} \  \tikzmark{Y12}Y\tikzmark{Y'12} \  \tikzmark{Z12}Z} \\
\hline
 1 & 2 & 1 \\
\hline
 2 & 3 & 1 \\
\hline
 \end{tabular}
 \begin{tikzpicture}[overlay, remember picture, yshift=.25\baselineskip, shorten >=.5pt, shorten <=.5pt]
  \draw [->] ([yshift=3pt]{pic cs:X12})  [line width=0.2mm] to ([yshift=3pt]{pic cs:Y12});
  \end{tikzpicture}
\end{center}

Clearly $T\not\models\dep{Z}{Y}$. However, the intervened team $T_{X=1}$
\begin{center}
\begin{tabular}{|c|c|c|}
\hline
 \multicolumn{3}{|c|}{X\tikzmark{X13} \  \tikzmark{Y13}Y\tikzmark{Y'13} \  \tikzmark{Z13}Z} \\
\hline
 1 & 2 & 1 \\
\hline
 \end{tabular}
 \begin{tikzpicture}[overlay, remember picture, yshift=.25\baselineskip, shorten >=.5pt, shorten <=.5pt]
  \draw [->] ([yshift=3pt]{pic cs:X13})  [line width=0.2mm] to ([yshift=3pt]{pic cs:Y13});
  \end{tikzpicture}
\end{center}
is such that $T_{X=1}\models\dep{Z}{Y}$. Said otherwise, $T\models X=1 \cf \dep{Z}{Y}$. So, intervening on $X$ has changed a property of variable $Y$ (that of being functionally dependent, or not, on $Z$). Clearly, such a team property is completely different from those that are usually considered in discussions about causation; and they may be thought as properties of the \emph{set} of variables $\{Y,Z\}$. 
Said otherwise, intervening on $X$ may affect the set $\{Y,Z\}$.

We can construct similar examples also in the language of probabilities, without need for counterfactuals nor selective implications. Consider the following multiteam with variables $X,Y$ and no arrows, and the multiteam obtained from it by applying $do(X=0)$:

\begin{center}
$S$:
\begin{tabular}{|c|c|}
\hline
 \multicolumn{2}{|c|}{$X$ \ \ $Y$} \\
\hline
 0 & 0 \\
\hline
0 & 1 \\
\hline
1 & 2 \\
\hline
 \end{tabular}
\hspace{20pt}
$S_{X=0}$:
 \begin{tabular}{|c|c|}
\hline
 \multicolumn{2}{|c|}{$X$ \ \ $Y$} \\
\hline
 0 & 0 \\
\hline
0 & 1 \\
\hline
0 & 2 \\
\hline
 \end{tabular}
\end{center}

Notice that the intervened team contains a new assignment $\{(X,0),(Y,2)\}$. Therefore, the probability of formula $X=0 \land Y=2$ raised from $0$ (before the intervention) to $1/3$ (after the intervention).
In this case, intervening on $X$ affected the set $\{X,Y\}$, in the sense that it changed a property of the relationship between $X$ and $Y$.

\subsection{Invariance}

We want to make some remarks about one methodological aspect of Pearl's approach to causation. In his causal models, arrows represent the statement, or guess, that some functional dependencies are invariant under \emph{most} of the interventions that can be carried out on the system (a structural equation $X:= f_X(PA_X)$ is invariant under all interventions that do not act on $X$). Graphs therefore draw a line between some of the dependencies occurring in the data, which might be considered either 1) contingent or 2) due to a third common cause, and those that 3) have to be considered causal dependencies. But it must be immediately observed that there may be dependencies that are invariant in the sense specified above, and yet are not represented by arrows in the causal graph. 

 Similarly, in our causal teams, the graph $G(T)$ encodes a set of invariant functional dependencies of the form $ \dep{PA_Y}{Y}$,
but the causal team could still sustain invariant dependencies which are not explicitated by $G(T)$ (in this case, we say that a functional dependency $\dep{X_1,\dots,X_n}{Y}$ is \emph{invariant in $T$} if it holds after any intervention on $T$\footnote{One might raise the protest that it should be required that $\dep{X_1,\dots,X_n}{Y}$ be invariant under any \emph{sequence} of interventions. However, the importation law we have proved in earlier sections shows that for any sequence of interventions, there is a single intervention generating the same modified causal team. Notice, futhermore, that in this context the actual team $T$ can be thought of as the result of a null intervention.}); call $G_{inv}(T)$ the graph which represents all the invariant dependencies. Notice that while the structural equation for $Y$ is not invariant under interventions that act on $Y$, the functional dependency $\dep{X_1,\dots,X_n}{Y}$ \emph{is} invariant under such interventions, although its arrows are removed from the graph. What happens is this (we consider for simplicity an intervention over only one variable $Y$): if $T\models\dep{X_1,\dots,X_n}{Y}$, then $(X_i,Y)\in G(T)$ but $(X_i,Y)\notin G(T_{Y=y})$. But since $Y$ is constant in $T_{Y=y}$, no further intervention can disrupt the dependency $\dep{X_1,\dots,X_n}{Y}$; therefore $(X_i,Y)\in G_{inv}(T_{Y=y})$. This shows that there may be invariant dependencies in a causal team that are not represented by arrows of the graph.

 In general, we can isolate three graphs of interest: $G(T)\subseteq G_{inv}(T) \subseteq G_{cont}(T)$, where $G_{cont}(T)$ represents \emph{all} the functional dependencies in $T^-$, including those that are not invariant. (Furthermore, as shown in a previous subsection, it might be of interest to consider the subgraphs of $G(T)$ that represent direct causes, total causes and probabilistic total causes).
The point is that the choice of $G(T)$ determines what an intervention is/does; in turn, the set of interventions 
 thus defined may determine some further invariant dependencies (set $G_{inv}(T)$), those that hold both in the initial team and in all the intervened ones. The arrows of this new graph might happen to be inappropriate to capture a notion of ``direct'' or also of ``indirect'' causation. 

It might be interesting to ask whether this kind of invariance is definable in our languages; the question makes sense not only for functional dependencies, but for any property definable by a formula. The answer is positive; we can give a model-dependent definition. A formula $\psi$ is invariant for $T$ if:


\[
T\models INV(\psi) := \bigwedge \{ \SET{Z}=\SET{z}\cf \psi\}
\]
where the conjunction ranges over all consistent conjunctions of the form $\SET{Z}=\SET{z}$.  
 Notice that this is not a single formula, but a scheme of formulas of $\CD$; which formula is actually picked depends on the domain of the causal team $T$ and the ranges of its variables. We do not know whether a uniform definition can be given, or if instead adding invariance statements (for sentences, or just for dependencies) might increase the expressivity of our languages.

\section{Conclusions}

We have shown how to generalize team semantics in a way that it may incorporate the kind of counterfactual and causal reasoning which is typical of recursive, possibly nonparametric structural equation models. We briefly explained how the semantics can be extended to cover also nonrecursive, parametric models with unique or no solutions. We introduced languages for causal discourse (both deterministic and probabilistic) over causal teams; we showed that these languages are in some ways more general than notations typically used in the literature (in that, for example, they may distinguish pre- and post-intervention observations).

We explored somewhat systematically the logic underlying deterministic and probabilistic languages for interventionist counterfactual reasoning when these are applied to recursive, parametric causal teams; we also shortly sketched what challenges arise in the treatment of the nonparametric case, and proposed tentative clauses for nonclassical semantical notions that naturally arise in this case. Finally, we analyzed how some of the notions of cause and invariance that arise from manipulationist theories of causation can be expressed and generalized in our framework.

These initial explorations of causal team semantics leave space for further investigation. On the side of causal modeling, parametric nonrecursive and nonparametric causal teams certainly deserve a more extensive study than what was sketched here (we find it unlikely, instead, that the study of the nonrecursive nonparametric teams might be fruitful). Another important case in causal modeling, which we only briefly grazed in this paper, are models with unobserved variables; defining a causal team semantics for this case would surely require some deep rethinking of the framework. Similar challenges might be posed by the study of forms of interventions beyond the ``surgical'' ones considered here.

On the logical side, it would be important to develop tools for the comparison of expressive power of distinct languages, and to investigate systematically whether the introduction of counterfactuals leads to significant new definability results. The inferential aspects of causal languages should also be further investigated; in particular, complete deduction systems for deterministic languages with (in)dependence atoms are missing; and the case of probabilistic languages is still unexplored. Further directions for investigation are comparisons with previous notion of counterfactuals, and the study of the modal structure induced by interventions on teams.

The present work has been developed within the Academy of Finland Project n.286991, ``Dependence and Independence in Logic: Foundations and Philosophical Significance''.

\bibliographystyle{elsart-num-sort}
\bibliography{iilogics}

\end{document}